\renewcommand\subsubsection{\@secnumfont}{\bfseries}%
\renewcommand\subsubsection{\@startsection{subsubsection}{3}
  \z@{.5\linespacing\@plus.7\linespacing}{-.5em}%
  {\normalfont\bfseries}}
  \numberwithin{equation}{section} 
\def\paragraph{\@startsection{paragraph}{4}%
  \z@\z@{-\fontdimen2\font}%
  {\bfseries\itshape}}
\DeclareMathOperator{\one}{\mathbbm{1}} %%%% indicator function
\renewcommand{\O}[1]{O\left(#1\right)}
\newtheoremstyle{newplain}%
  {}{}
  {\itshape}{}
  {\bfseries}{.}
  { }{\thmname{#1}\thmnumber{ #2}\thmnote{ (#3)}}
\theoremstyle{newplain}
    \newtheorem{theorem}{Theorem}[section]
    \newtheorem{lemma}[theorem]{Lemma}
    \newtheorem{proposition}[theorem]{Proposition}
    \newtheorem{corollary}[theorem]{Corollary}
    \newtheorem{claim}[theorem]{Claim}
\theoremstyle{definition} % For roman text in the body
    \newtheorem{definition}[theorem]{Definition}
    \newtheorem{remark}[theorem]{Remark}
\DeclareMathOperator{\Ber}{\mathrm{Ber}}
\DeclareMathOperator{\Rr}{\mathbb{R}}
\DeclareMathOperator{\Z}{\mathbb{Z}}
\DeclareMathOperator{\N}{\mathbb{N}}
\DeclareMathOperator{\Ba}{\mathcal{B}}
\DeclareMathOperator{\ESD}{ESD}
\newcommand {\ota}{\dot{\iota}}
\DeclareMathOperator{\De}{d}
\DeclareMathOperator{\Tr}{Tr}
\DeclareMathOperator{\tr}{tr}
\DeclareMathOperator{\diag}{diag}
\DeclareMathOperator{\Id}{I}
\DeclareMathOperator{\dd}{d}
\DeclareMathOperator{\Var}{Var}
\DeclareMathOperator{\St}{\mathrm{S}}
\newcommand{\G}{\mathbb{G}}
\newcommand{\mfG}{\mathfrak{G}}
\newcommand{\E}{\mbox{${\mathbb E}$}}
\newcommand{\A}{\mathbf{A}}
\newcommand{\bigO}{\mbox{${\mathrm O}$}}
\newcommand{\xvec}{\mathbf{x}}
\newcommand{\Xvec}{\mathbf{X}}
\newcommand {\prob}{\mathbb{P}}
\newcommand{\ps}{\mathsf{P}}
\newcommand{\B}{\mathbf{B}}
\newcommand{\e}{\mathrm{e}}
\newcommand{\M}{\mathbf{M}}
\newcommand{\C}{\mathbb{C}}
\newcommand{\bA}{\mathbf{A}}%standard adjacency
\newcommand{\tilbA}{\tilde{\mathbf{A}}}%standard adjacency
\newcommand{\cA}{\overline{\mathbf{A}}} %centred adjacecny
\newcommand{\Ver}{\mathbf{V}}
\newcommand{\Tpi}{\mathcal{T}_\pi} 
\global\long\def\ep{\mathbf{E}} %%% Law of paretos, expectation
\global\long\def\pr{\mathbf{P}} %%% Law of paretos
\begin{document}

\title[KBRG]{The spectrum of dense kernel-based random graphs}

%    author one information
%%    author two  information
%%    author three  information
\author[A. Cipriani]{Alessandra Cipriani}
\address{Department of Statistical Science,
University College London,
Gower Street,
London WC1E 6BT,
United Kingdom.}
\email{a.cipriani@ucl.ac.uk}
\author[R.~S.~Hazra]{Rajat Subhra Hazra}
 \address{University of Leiden, Mathematical Institute, Einsteinweg 55,
2333 CC Leiden, The Netherlands.}
\email{r.s.hazra@math.leidenuniv.nl}
\author[N. Malhotra]{Nandan Malhotra}
\address{University of Leiden, Mathematical Institute, Einsteinweg 55,
2333 CC Leiden, The Netherlands.}
\email{n.malhotra@math.leidenuniv.nl}
\author[M.Salvi]{Michele Salvi}
\address{Department of Mathematics,
University of Tor Vergata,
Via della Ricerca Scientifica 1, 00133 Rome, Italy.
}
\email{salvi@mat.uniroma2.it}

\begin{abstract}
   {Kernel-based random graphs (KBRGs) are a broad class of random graph models that account for inhomogeneity among  vertices. We consider KBRGs on a discrete $d-$dimensional torus $\mathbf{V}_N$ of size $N^d$. Conditionally on an i.i.d.~sequence of {Pareto} weights $(W_i)_{i\in \mathbf{V}_N}$ with tail exponent $\tau-1>0$, we connect any two points $i$ and $j$ on the torus with probability
		$$p_{ij}= \frac{\kappa_{\sigma}(W_i,W_j)}{\|i-j\|^{\alpha}} \wedge 1$$ for some parameter $\alpha>0$ and $\kappa_{\sigma}(u,v)= (u\vee v)(u \wedge v)^{\sigma}$ for some $\sigma\in(0,\tau-1)$.
		We focus on the adjacency operator of this random graph and study its empirical spectral distribution. For $\alpha<d$ and $\tau>2$, we show that a non-trivial limiting distribution exists as $N\to\infty$ and that the corresponding measure $\mu_{\sigma,\tau}$ is absolutely continuous with respect to the Lebesgue measure. $\mu_{\sigma,\tau}$ is given by an operator-valued semicircle law, whose Stieltjes transform is characterised by a fixed point equation in an appropriate Banach space. We analyse the moments of $\mu_{\sigma,\tau}$ and prove that the second moment is finite even when the weights have infinite variance. In the case $\sigma=1$, corresponding to the so-called scale-free percolation random graph, we can explicitly describe the limiting measure and study its tail.  }
\end{abstract}
\keywords{Empirical spectral distribution, free probability, kernel-based random graphs, spectrum of random matrices}
\subjclass{05C80, 46L54, 60B10, 60B20}
\date{\today}
\maketitle
\setcounter{tocdepth}{1}
\tableofcontents

\section{Introduction}

Kernel-based spatial random graphs encompass a wide variety of classical random graph models where vertices are embedded in some metric space. In their simplest form (see~\cite{jorritsma2023cluster} for a more complete exposition) they can be defined as follows. Let $V$ be the vertex set of the graph and sample a collection of weights $(W_i)_{i \in V}$, which are independent and identically distributed (i.i.d.), serving as marks on the vertices. %These weights are typically assumed to follow a power-law distribution.
	Conditionally on the weights, two vertices $i$ and $j$ are connected by an undirected edge with probability
	\begin{equation}\label{eq:connection}
		\mathbb{P}\left(i\leftrightarrow j \mid W_i, W_j \right) = \kappa(W_i, W_j)\|i-j\|^{-\alpha} \wedge 1 \, ,
	\end{equation}
	where $\kappa$ is a symmetric kernel, $\|i-j\|$ denotes the distance between the two vertices in the underlying metric space and $\alpha>0$ is a constant parameter. Common choices for $\kappa$ include:
	\[
	\begin{aligned}
	\kappa_{\text{triv}}(w,\, v) &\equiv 1, \quad & \kappa_{\text{strong}}(w,\,v) &= w \vee v, \\
	\kappa_{\text{prod}}(w, v) &= w\, v , \quad & 
	\kappa_{\text{pa}}(w, v) &= (w \vee v)(w \wedge v)^{\sigma_{\text{pa}}}.
	\end{aligned}
	\]
In the above $\sigma_{\text{pa}}= {\alpha(\tau-1)}/{d}-1$, where $\tau-1$ is the exponent of the tail distribution of the weights, such that the kernel $\kappa_{\text{pa}}$ mimics the form that appears in preferential attachment models~\citep{jorritsma2023cluster}, while the trivial kernel $\kappa_{\text{triv}}$ corresponds to the classical long-range percolation model~\citep{schulman1983long,newman1986one}. The kernel $\kappa_{\text{prod}}$ yields a model which is substantially equivalent to scale-free percolation, introduced in \cite{Deijfen:Remco:Hoogh}, which has connection probabilities of the form 
	\[
	1 - \exp\left( -W_i W_j \|i-j\|^{-\alpha} \right).
	\]
Various percolation properties for kernel-based spatial random graphs are known on $\Z^d$ and beyond (\cite{DHW2015, Hao2023, van2017explosion, gracar2021percolation, jorritsma2024large}, see also \cite{Deprez2019, Dalmau2021} for a version of the same in the continuum) as well as the behaviour of interacting particle systems on them~\citep{berger2002transience, heydenreich2017structures, komjathy2020explosion,cipriani2024scale,gracar2024contact,bansaye2024branching, komjathy2023four}. In contrast, their spectral properties, to the best of the authors' knowledge, have received less attention.

As a branch of random matrix theory, the study of the spectrum of random graphs has wide applications ranging from the study of random Schr\"odinger operators~\citep{carmona2012spectral,geisinger2015convergence} and quantum chaos in physics, to the analysis of community structures~\citep{bordenave2015non} and diffusion processes in network science, to the problems of spectral clustering~\citep{champion2020robust} and graph embeddings~\citep{gallagher2024spectral} in data science.
	 Many challenges remain unsolved in this area, even for the simplest models. As a prominent example, for bond percolation on $\mathbb{Z}^2$ it is known that the expected spectral measure has a continuous component if and only if $p > p_c$, but this result has not yet been established in higher dimensions~\citep{bordenave2017mean}. In this paper we begin the study of spectral properties of spatial inhomogeneous random graphs, which in turn have been proposed as models for several real-world networks (see e.g.~\cite{Dalmau2021}).

	\smallskip
	
	We will work with KBRGs in the typical setting where the weights $(W_i)$ have support in $[1,\infty)$ and the kernel $\kappa$ is an increasing function of the weights. Let us recall that in this case the vertices of KBRG random graphs on $\Z^d$ have almost surely infinite degree as soon as $\alpha<d$. Thus, as it happens in many percolation problems, the regime $\alpha > d$ would be the most appealing (and the toughest to tackle). In the present work we will focus instead on the dense case $\alpha<d$. We consider the discrete torus with $N^d$ vertices  equipped with the torus distance $\|\cdot\|$. The weights are sampled independently from a Pareto distribution with parameter $\tau - 1$ with $\tau > 2$. Conditionally on the weights, vertices $i$ and $j$ are connected independently from other pairs with probability given by \eqref{eq:connection} with a kernel of the form $\kappa_\sigma(w, v) \coloneqq (w\vee v)(w \wedge v)^{\sigma}$.
	It is worth noting a difference between our connection probability and that studied recently in \cite{jorritsma2023cluster, van2023local}, where the connection probabilities are given by
	\[
	\mathbb{P}\left( i\leftrightarrow j \mid W_i, W_j \right) = \left( \kappa_{\sigma}(W_i, W_j)\|i-j\|^{-d} \wedge 1 \right)^{\alpha}.
	\]
	The two forms can be made equivalent through a simple modification of the weights and an appropriate choice of $\alpha$.

We call $\mathbb G_N$ the random graph obtained with this procedure and 
	study the empirical spectral distribution of its adjacency matrix,  appropriately scaled. Note that when $\alpha = 0$ we recover the (inhomogenous) Erd\H{o}s--R\'enyi random graph (modulo a tweak inserting a suitable tuning parameter $\varepsilon_N$) . 
	In recent years, there has been significant research on inhomogeneous Erd\H{o}s–R\'enyi random graphs, which can be equivalently modelled by Wigner matrices with a variance profile. The limiting spectral distribution of the adjacency matrix of such graphs has been studied in \cite{CHHS, zhu2020graphon, Bose}, while local eigenvalue statistics have been analysed in \cite{Zhu:dumitriu, ajanki2019quadratic}. \cite{zhu2024central} studies the fluctuations of the linear eigenvalue statistics for a wide range of such inhomogeneous graphs. Additionally, various properties of the largest eigenvalue have been investigated in \cite{cheliotis2024limit, Husson, LDPChak, ducatez2024large}.  
	%In the dense regime, it is well-known that the empirical spectral distribution of the Erd\H{o}s-R\'enyi random graph converges to the semicircle law after scaling. In the sparse regime, the limiting distribution is not explicitly known, although various properties have been characterised, particularly using the theory of local weak convergence. In this case, the limiting measure is related to the spectral measure of the Galton-Watson tree \citep{bordenave2010resolvent}. 
	One of the most significant properties of the limiting spectral measure for random graphs is its absolute continuity with respect to the Lebesgue measure, which is closely tied to the concept of mean quantum percolation \citep{bordenave2017mean, anantharaman2021absolutely, arras2023existence}. Quantum percolation investigates whether the limiting measure has a non-trivial absolutely continuous spectrum. Recently, it was shown in \cite{arras2023existence} that the adjacency operator of a supercritical Poisson Galton-Watson tree has a non-trivial absolutely continuous part when the average degree is sufficiently large. Additionally, \cite{bordenave2017mean} demonstrated that supercritical bond percolation on $\mathbb{Z}^d$ has a non-trivial absolutely continuous part for $d = 2$. 
	These results motivate similar questions for KBRGs.
	
	\smallskip
	
	\noindent {\bf{Our contributions: results and proofs.}} Here below we showcase our main results and the novelties of our proofs Recall that we work in the regime $\alpha<d$ and $\tau>2$. We also restrict to values of $\sigma$ in $(0,\tau-1)$.  
	\begin{enumerate}[leftmargin=*]
		\itemsep0.3em
		\item In Theorem \ref{theorem:main} we show that, after scaling the adjacency matrix of $\mathbb G_N$ by $c_{0}N^{(d-\alpha)/2}$, the empirical spectral distribution converges weakly in probability to a deterministic measure $\mu_{\sigma,\tau}$. The classical approach to proving the convergence of the empirical distribution is generally through either the method of moments or the Stieltjes transform. However, the limiting measure is expected to be heavy-tailed (see Figure \ref{fig:tailhist}) and so it is not  determined by its moments. As a consequence, we cannot directly apply the method of moments. To overcome this issue, we pass through a truncation argument where we impose a maximal value to the weights, reducing the problem to well-behaved measures. To simplify the method of moments, we further reduce the model by substituting the adjacency matrix of $\mathbb G_N$ with a Gaussian matrix whose entries are centered and have roughly the same variance as before.  This is made possible by a classical result of \cite{chatterjee2005simple}. Once we have shifted our attention to this simpler Gaussianized matrix with bounded weights, we can use the classical method of moments using finding its moments is made possible by a combinatorial argument on partitions and their graphical representation. Finally we remove the truncation effect. 
		\item In Theorem \ref{theorem:tail} we investigate the graph corresponding to $\kappa_{\text{prod}}$, that is, when $\sigma=1$. In this case we can explicitly identify $\mu_{1,\tau}$ as the free multiplicative convolution of the semicircle law and the measure of the weight distribution. In the $\sigma =1$ case the moment expression derived in Theorem~\ref{theorem:main} simplifies, so the challenge is to recover the limiting measure from those moments. This is made possible thanks to extension of the free multiplicative convolution to measures with unbounded support by~\cite{arizmendi2009}. Furthermore, we show that $\mu_{1,\tau}$ has power-law tails with exponent $2(\tau - 1)$. This is based on a Breiman-type argument for free multiplicative convolutions~\citep{Bartosz:Kamil}.
		\item In Theorem \ref{theorem:secondmoment} we explicitly derive the second moment of $\mu_{\sigma,\tau}$ and prove that it is finite and non-degenerate. The proof is based on the ideas of~\citet[Theorem 2.2]{Chakrabarty:Hazra:Sarkar:2016} This result is noteworthy because our weight distribution may exhibit infinite variance in the chosen range of parameters. To show that the second moment is finite, we need to establish the uniform integrability of a sequence of measures converging to the limiting measure. This is achieved through an extension of Skorohod's representation theorem for measures that converge weakly in probability.
		\item In Theorem \ref{theorem:absolutecontinuity} we prove that $\mu_{\sigma,\tau}$ is absolutely continuous. What makes the result possible is that we are able to split the original matrix as a free sum of a standard Wigner matrix and another Wigner matrix with a carefully chosen variance profile (yielding, as a by-product, another characterisation of the limit measure $\mu_{\sigma,\tau}$). We show that $\mu_{\sigma,\tau}$ is the free additive convolution of a semicircle law and another measure. Once this is established, the result is a consequence of~\cite{Biane97}.
		\item In Theorem \ref{theorem:stieltjesfinal} we provide an analytical description of $\mu_{\sigma,\tau}$ when $\tau>3$ and $\sigma<\tau-2$. Removing the truncation in the method of moments proof of Theorem \ref{theorem:main} does not yield an explicit characterization of the limiting measure. On the other hand, certain moment recursions for the truncated Gaussian matrix that appear in the proof can be used to derive properties of $\mu_{\sigma,\tau}$ through the Stieltjes transform.  When the weights are bounded, the limiting measure corresponds to the operator-valued semicircle law (\cite{Speicher2011}). Its transform can be expressed in terms of functions solving an analytic recursive equation (see \cite{avena, zhu2020graphon} for similar results in other random graph ensembles). In our case, when the weights are heavy-tailed, this is no longer possible. We achieve instead convergence of the analytic recursive equation by constructing a suitable Banach space and demonstrating that it forms a contractive mapping.
	\end{enumerate}
	
    \noindent\textbf{Outline of the article.} In Section~\ref{sec:main} we will define the model and state precisely the main results. In Section~\ref{sec:prel_lemmas} we will give some auxiliary results which will be used to prove the main theorems in the rest of the article. More precisely, in Section~\ref{sec:proof_thm_main} we will prove the existence of the limiting ESD and in Section~\ref{sec:tail} we will give estimates on its tail behavior. In Section~\ref{sec:non-deg} we will prove the non-degeneracy of the limiting measure and in Section~\ref{sec:absolutecontinuity} we will show its absolute continuity. Finally, Section~\ref{sec:stieltjes} is devoted to describing the Stieltjes transform of the limiting ESD.

\section{Set-up and main results}\label{sec:main}
\subsection{Random graph models}
To introduce our models, we use $a \wedge b$ to denote the minimum of two real numbers $a$ and $b$, and $a \vee b$ to denote their maximum.

%For simplicity, we consider the case $d = 1$ in this article, with extensions to $d > 1$ discussed in Remark~\ref{rem:high_d}. The key components defining the graph are as follows:

\begin{itemize}[leftmargin=*]
				\itemsep0.3em
    \item[(a)] {\bf Vertex set:} the vertex set is $\Ver_N\coloneqq \{1,\,2,\,\ldots,\,N\}^d$. The vertex set is equipped with torus the distance $\|i - j\| $, where
    \[
    \|i - j\| = \sum_{\ell=1}^d |i_\ell - j_\ell| \wedge (N - |i_\ell - j_\ell|).
    \]

    \item[(b)] {\bf Weights:} the weights $(W_i)_{i \in \Ver_N}$ are i.i.d.\ random variables sampled from a Pareto distribution $W$ (whose law we denote by $\mathbf{P}$) with parameter $\tau - 1$, where $\tau > 1$. That is,
    \begin{equation}\label{eq:paretolaw}
    \mathbf{P}(W > t) = t^{-(\tau - 1)}\one_{\{t \geq 1\}}+\one_{\{t < 1\}}.
    \end{equation}

    \item[(c)] {\bf Kernel:} the kernel function $\kappa_\sigma: [0, \infty) \times [0, \infty) \to [0, \infty)$  determines how the weights interact. In this article, we focus on kernel functions of the form
    \begin{equation}\label{eq:kappa}
    \kappa_\sigma(w, v) \coloneqq (w\vee v)(w \wedge v)^{\sigma},
    \end{equation}
    where $\sigma \geq 0$. 

    %\item[(d)] {\bf Profile function:} Let $f: [0, \infty) \to [0, 1]$ be a non-decreasing function.

    \item[(d)] {\bf Long-range parameter:} $\alpha > 0$ tunes the influence of the distance between vertices on their connection probability.

    \item[(e)] {\bf Connectivity function:} conditional on the weights, each pair of distinct vertices $i$ and $j$ is connected independently with probability $P^W(i\leftrightarrow j)$ given by 
    \begin{equation}\label{connection_proba}
    P^W(i\leftrightarrow j) \coloneqq \mathbb{P}(i \leftrightarrow j \mid W_i, W_j) = \frac{\kappa_\sigma(W_i, W_j)}{\|i - j\|^\alpha} \wedge 1.
    \end{equation}
    We will be using the short-hand notation $p_{ij} := \mathbb{P}(i \leftrightarrow j \mid W_i, W_j)$ for convenience. Note that the graph does not have self-loops (see Remark~\ref{rem:selfloop}).
\end{itemize}

The associated graph is connected, as nearest neighbours with respect to the torus distance are always linked. 
% When $\sigma = 1$, the kernel $\kappa_1(i, v) = \kappa_{\text{prod}}(u, v) = uv$ corresponds to the standard scale-free percolation random graph. For $\kappa_{\text{triv}} = \kappa_{\text{LRP}} \coloneqq 1$, we recover the classical long-range percolation kernel~\cite[Eq. (1.8)]{jorritsma2023cluster}.

\subsection{Spectrum of a random graph} Let us denote the random graph generated by our choice of edge probabilities by $\mathbb{G}_N$. Let $\mathbb{A}_{\mathbb{G}_N}$ denote the adjacency matrix (operator) associated with this random graph, defined as
\[
\mathbb{A}_{\mathbb{G}_N}(i,j) = 
\begin{cases} 
1 & \text{if } i \leftrightarrow j, \\ 
0 & \text{otherwise}.
\end{cases}
\]
Since the graph is finite, the adjacency matrix is always self-adjoint and has real eigenvalues. For $\alpha < d$, the eigenvalues require a scaling, which turns out to be independent of the kernel in our setup. Here we assume $\sigma \in (0,\tau-1)$ and $\tau > 2$, ensuring that the vertex weights $(W_i)_{i \in \Ver_N}$ have finite mean. We define the scaling factor as
\begin{equation}\label{eq:def_c}
c_N = \frac{1}{N^d} \sum_{i \neq j \in \Ver_N}\frac{1}{\|i-j\|^{\alpha}} \sim c_0 N^{d-\alpha},
\end{equation}
where $c_0$ is a constant depending on $\alpha$ and $d$, and for two functions $f(\cdot)$ and $g(\cdot)$ we use $f(t)\sim g(t)$ to indicate that their quotient $f(t)/g(t)$ tends to one as $t$ tends to infinity. The scaled adjacency matrix is then defined as
\begin{equation}\label{eq:scaledadjacency}
\A_N \coloneqq \frac{\mathbb{A}_{\mathbb{G}_N}}{\sqrt{c_N}}.
\end{equation}

The empirical measure that assigns a mass of $1/N^d$ to each eigenvalue of the $N^d \times N^d$ random matrix $\mathbf{A}_N$ is called the Empirical Spectral Distribution (ESD) of $\mathbf{A}_N$, denoted as
\[
\ESD\left(\mathbf{A}_N\right) \coloneqq \frac{1}{N^d} \sum_{i=1}^{N^d} \delta_{\lambda_i},
\]
where $\lambda_1 \leq \lambda_2 \leq \ldots \leq \lambda_{N^d}$ are the eigenvalues of $\mathbf{A}_N$.

\subsection{Main results}
We are now ready to state the main result of this article. Let $\mu_{\sqrt{W}}$ and $\mu_{W}$ denote the laws of $\sqrt{W}$ and $W$, respectively. Here onwards, let $\mathbb{P} = \mathbf{P} \otimes P^W$ represent the joint law of the weights and the edge variables. Note that $\mathbb{P}$ depends on $N$, but we omit this dependence for simplicity. Let $\E, \mathbf{E}$, and $E^W$ denote the expectation with respect to $\prob, \mathbf{P}$, and $P^W$ respectively. 
Furthermore, if $(\mu_N)_{N \geq 0}$ is a sequence of probability measures, we write $\lim_{N \to \infty} \mu_N = \mu_0$ to denote that $\mu_0$ is the weak limit of the measures $\mu_N$. Since the empirical spectral distribution is a random probability measure, we require the notion of convergence in probability in the context of weak convergence.

The Lévy-Prokhorov distance $d_L: \mathcal{P}(\mathbb{R})^2 \to [0, +\infty)$ between two probability measures $\mu$ and $\nu$ on $\mathbb{R}$ is defined as
\[
d_L(\mu, \nu)
    := \inf \big\{\varepsilon > 0 \mid \mu(A) \leq \nu\left(A^\varepsilon\right) + \varepsilon \,\text{ and }\, \nu(A) \leq \mu\left(A^\varepsilon\right) + \varepsilon \quad \forall \,A \in \mathcal{B}(\mathbb{R})\big\},
\]
where $\mathcal{B}(\mathbb{R})$ denotes the Borel $\sigma$-algebra on $\mathbb{R}$, and $A^\varepsilon$ is the $\varepsilon$-neighbourhood of $A$. For a sequence of random probability measures $(\mu_N)_{N \geq 0}$, we say that
\[
\lim_{N \to \infty} \mu_N = \mu_0 \text{ in } \mathbb{P}\text{-probability}
\]
if, for every $\varepsilon > 0$,
\[
\lim_{N\to\infty}\mathbb{P}(d_L(\mu_N, \mu_0) > \varepsilon)= 0.
\]

The first result states the existence of the limiting spectral distribution of the scaled adjacency matrix. 
\begin{theorem}[{Limiting spectral distribution}]\label{theorem:main}
    Consider the random graph $\G_N$ on $\Ver_N$ with connection probabilities given by \eqref{connection_proba} with parameters $\tau>2$, $0<\alpha< d$ and $\sigma\in (0,\tau-1)$.  Let $\ESD(\A_N)$ be the empirical spectral distribution of $\A_N$ defined in \eqref{eq:scaledadjacency}. Then there exists a deterministic measure $\mu_{\sigma,\tau}$ on $\mathbb R$ such that
    $$
    \lim_{N\to \infty}\ESD(\A_N)=\mu_{\sigma,\tau}\qquad\text{ in $\,\prob$--probability}\,.
    $$
    
\end{theorem}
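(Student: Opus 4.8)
The plan is to establish convergence of the ESD via the moment method, combined with a concentration argument that upgrades almost-sure convergence of expected moments to convergence in probability in the Lévy–Prokhorov metric. Because the weight distribution has tail exponent $\tau-1$ and we only assume $\tau>2$, not all moments of the limiting measure need exist, so the classical moment method cannot be applied verbatim; instead I would first truncate the weights at a level $W_i \wedge b_N$ for a slowly growing $b_N$ (say $b_N = (\log N)^{c}$ or $N^{\eta}$ for small $\eta$), show that the truncated graph has an ESD with all moments finite, and control the Lévy–Prokhorov distance between the truncated and untruncated ESDs using a rank/Hoffman–Wielandt-type bound together with the fact that the expected number of affected edges is $o(N^d)$ when $\tau>2$. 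This reduces the problem to the bounded-weight case.

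For the bounded-weight (or truncated) model, I would compute
\[
m_k(N) \coloneqq \E\left[\int x^k \,\mathrm{d}\,\ESD(\A_N)(x)\right] = \frac{1}{N^d\, c_N^{k/2}} \, \E\left[\Tr\bigl(\mathbb{A}_{\mathbb{G}_N}^k\bigr)\right] = \frac{1}{N^d\, c_N^{k/2}} \sum_{i_1,\dots,i_k} \E\left[\prod_{\ell=1}^k \one_{\{i_\ell \leftrightarrow i_{\ell+1}\}}\right],
\]
with indices cyclic. Conditioning on the weights, the edge indicators are independent, so the expectation over edges factorizes into a product of $p_{i_\ell i_{\ell+1}}$ over the distinct edges of the closed walk $i_1 \to i_2 \to \cdots \to i_k \to i_1$. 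As in Wigner-type arguments, the dominant contribution comes from closed walks whose edge-multigraph is a tree traversed twice (so $k$ must be even, $k=2m$, and the walk has $m$ distinct edges on $m+1$ distinct vertices); every such walk contributes a product of $m$ factors of the form $p_{i i'}^2 = \kappa_\sigma(W_i,W_{i'})^2 \|i-i'\|^{-2\alpha} \wedge 1$. The key spatial estimate is that, since $\alpha<d$, for fixed $W$'s the sum $\sum_{j} p_{ij}$ scales like $c_N \sim c_0 N^{d-\alpha}$ (this is essentially the definition \eqref{eq:def_c} with weights inserted), and more generally the tree-walk sums factorize over the $m$ edges, each contributing one power of $c_N$ up to lower-order corrections; the combinatorial count of tree shapes is the Catalan number $C_m$, exactly as for the semicircle law. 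The presence of the weights means each edge factor carries an extra expectation $\E[\kappa_\sigma(W,W')^2 / (\text{normalization})]$, and tracking how these weight-expectations attach along the tree gives the moments of $\mu_{\sigma,\tau}$; one must check these are finite, which for $k=2m$ requires $\sigma < \tau-1$ and is exactly the content foreshadowed in Theorem~\ref{theorem:secondmoment}. All other walk topologies (edges used $\geq 3$ times, or cycles present) are shown to contribute $o(1)$ by a standard counting argument: each extra vertex beyond $m+1$ costs a factor $N^{-d}$ that is not compensated, and each repeated edge saves only a bounded weight-factor since $p_{ij}\le 1$.

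Having shown $m_k(N) \to m_k$ for every $k$ with $(m_k)$ satisfying Carleman's condition (guaranteed in the truncated model, and preserved in the limit since the truncation correction vanishes), I would then prove concentration: $\Var\bigl(\int x^k\,\mathrm{d}\,\ESD(\A_N)\bigr) \to 0$. This follows from the same walk expansion applied to $\E[(\Tr \mathbb{A}^k)^2] - (\E\Tr\mathbb{A}^k)^2$, where now pairs of closed walks that share no edge contribute zero to the covariance and connected pairs are suppressed by an extra $N^{-d}$; alternatively, an Efron–Stein / martingale bound on changing one row of $\mathbb{A}_{\mathbb{G}_N}$ (a rank-two perturbation) gives $\Var \lesssim N^d \cdot (\text{bounded}) / (N^d c_N^{k/2})^2 \to 0$. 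Chebyshev then yields convergence in probability of each moment, and a standard argument (moments determine a unique measure via Carleman, convergence of all moments plus tightness implies weak convergence, and this can be metrized by $d_L$) gives $\ESD(\A_N) \to \mu_{\sigma,\tau}$ in $\prob$-probability.

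The main obstacle is the interplay between the heavy-tailed weights and the walk combinatorics: one must choose the truncation level $b_N$ carefully so that (i) the truncated weights still reproduce the correct limiting moments (which requires $b_N \to \infty$), (ii) the truncated moments remain uniformly Carleman-summable, and (iii) the ESD perturbation from truncation is $o(1)$ in $d_L$ with high probability (which caps how fast $b_N$ may grow relative to $N$). Simultaneously, inside the truncated model the weight-expectations $\E[\kappa_\sigma(W\wedge b_N, W'\wedge b_N)^2 \wedge (\cdot)^{-\text{stuff}}]$ attached to tree edges must be shown to converge, edge by edge and uniformly in the tree shape, to the corresponding quantities defining $\mu_{\sigma,\tau}$ — this is where the condition $\sigma \in (0,\tau-1)$ is used essentially, since for $\sigma \geq \tau-1$ even the second moment would diverge. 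Making the spatial sums $\sum_j p_{ij}^2$, $\sum_j p_{ij}$ etc.\ asymptotically exact (not just order-of-magnitude) along arbitrary tree walks, uniformly in the weights on a high-probability event, is the technical heart of the argument.
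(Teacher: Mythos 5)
Your overall architecture (truncate, moment method, concentration, Carleman) is in the right spirit, but there are two concrete gaps that would derail the moment computation as you have set it up.

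\textbf{Missing centering.} You propose to compute $\E\bigl[\Tr\bigl(\mathbb{A}_{\mathbb{G}_N}^k\bigr)\bigr]$ directly for the (scaled, truncated) adjacency matrix and to argue that the dominant walks are trees traversed twice. That dichotomy relies on the matrix entries having mean zero: for a centered entry an edge traversed exactly once kills the conditional expectation, whereas for an uncentered Bernoulli variable $\one_{\{i\leftrightarrow j\}}$ any edge, whether traversed once or twice, contributes the \emph{same} factor $p_{ij}$ (since $\one^j = \one$). Consequently, walks with singly-traversed edges are not suppressed and can in fact dominate. A quick check at $k=3$: the conditional third moment is $\sum p_{i_1 i_2} p_{i_2 i_3} p_{i_3 i_1}$, and after normalizing by $N^d c_N^{3/2}$ this diverges for $\alpha$ small (in $d=1$, of order $N^{(1-3\alpha)/2}$). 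The mean matrix $(\E[p_{ij}])_{ij}$ has negligible effect on the ESD in the Lévy metric (this is the paper's Lemma on centring, via Hoffman--Wielandt), but it does \emph{not} have negligible effect on the moments, so you must subtract it before invoking the tree-walk combinatorics.

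\textbf{The per-edge factor.} You assert each doubled tree edge contributes $p_{ii'}^2 = \bigl(\kappa_\sigma \|i-i'\|^{-\alpha}\wedge 1\bigr)^2$ and that the weight-expectations entering the limit are of the form $\E[\kappa_\sigma(W,W')^2/\cdots]$. This is wrong by a square: for a Bernoulli indicator, $\E[\one^2]=p$, and the correct per-edge factor is $p_{ii'}$ (a single power of $\kappa_\sigma$). The paper's Proposition~\ref{prop:GaussianESD} confirms that the limiting moments involve $\prod_{(u,v)\in E(G_{\gamma\pi})}\kappa_\sigma(W_u^m,W_v^m)$, not $\kappa_\sigma^2$. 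Were the square correct, the integrability threshold in $\sigma,\tau$ that you quote from Theorem~\ref{theorem:secondmoment} would be wrong as well.

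Once one does center, the Bernoulli moments still lack a Wick formula (higher cumulants of a centered Bernoulli are not zero), which is why the paper inserts a genuinely different device: it Gaussianizes the centered entries via Chatterjee's Lindeberg-type comparison theorem and then uses Wick's formula to obtain a clean non-crossing pair-partition expansion. Your ``direct'' moment computation on the Bernoulli matrix is not impossible in principle, but it would require controlling the higher-cumulant corrections rather than the crisp pairing argument you invoke. Finally, a smaller structural point: you truncate at an $N$-dependent level $b_N\to\infty$ and claim the moment sequence remains Carleman-summable in the limit; the limiting measure $\mu_{\sigma,\tau}$ for the untruncated weights is not compactly supported and in general does not have all moments finite, so Carleman cannot be applied to it directly. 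The paper instead keeps a \emph{fixed} truncation level $m$, obtains a compactly supported limit $\mu_{\sigma,\tau,m}$ for each $m$, and then sends $m\to\infty$ using a two-parameter Slutsky-type lemma in the Lévy metric rather than via moments.
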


The remaining results focus of the properties of the limiting measure. First we note that when we set $\sigma=1$ we can explicitly identify the limiting measure in terms of free multiplicative convolution. We refer the reader to~\citet[Section 5.2.3]{anderson2010introduction} for an exposition on free multiplicative and additive convolutions. 

For two probability measures $\mu$ and $\nu$ the free multiplicative convolution $\mu\boxtimes \nu$ of the two measures is defined as  the law of the product $ab$ of free, random, non-commutative operators $a$ and $b$, with laws $\mu$ and $\nu$ respectively. 
%When the measures are compactly supported the operators are bounded, but if they have unbounded support then the law corresponds to unbounded self-adjoint operators associated to a von Neumann algebra. 
The free multiplicative convolution for two non-negatively supported measures was introduced in \cite{bercovici1993free}. Note that the semicircle law is not non-negatively supported and hence we use the extended definition of~\cite{arizmendi2009} for the multiplicative convolution. 

\begin{theorem}[{Limiting ESD for $\sigma=1$}]\label{theorem:tail}
  Consider the KBRG for $\sigma=1$, while $\alpha,\,\tau$ are as in the assumptions of Theorem~\ref{theorem:main}. The the limiting spectral distribution $\mu_{1,\tau}$ is given by
    $$\mu_{1,\tau}= \mu_{sc}\boxtimes \mu_{W}\,,$$
    where $\mu_{sc}$ is the semicircle law
\[
\mu_{sc}({\dd x}) =\frac{1}{2\pi} \sqrt{4-x^2}\one_{|x|\leq 2}\dd{x}
    \]
    and $\boxtimes$ is the free multiplicative convolution of the two measures.
    Moreover, the limiting measure $\mu_{1,\tau}$ has a power-law tail, that is, 
  $$
  \mu_{1,\tau}(x,\infty)\sim \frac{1}{2} \big(m_1(\mu_{W})\big)^{\tau-1} x^{-2(\tau-1)}\, \text{ as $x\to \infty$,}
  $$
  where $m_1(\nu)$ denotes the first moment of the probability measure $\nu$.
\end{theorem}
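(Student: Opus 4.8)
\noindent\textbf{Step 1: identifying $\mu_{1,\tau}$.} When $\sigma=1$ the kernel factorises, $\kappa_1(w,v)=(w\vee v)(w\wedge v)=wv$. The plan is to specialise the characterisation of $\mu_{\sigma,\tau}$ obtained in the proof of Theorem~\ref{theorem:main}. There, conditionally on the weights and after the standard reductions (centring, the truncation $\wedge 1$, and Gaussianisation of the entries), $\A_N$ becomes a generalised Wigner matrix with variance profile $\widetilde s_N(i,j)=\kappa_\sigma(W_i,W_j)/(\|i-j\|^\alpha c_N)$, and its Stieltjes transform is controlled by the vector Dyson equation $m_i(z)=\big(-z-\sum_j\widetilde s_N(i,j)m_j(z)\big)^{-1}$. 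Since the weights are independent of the positions and $\sum_{j\neq i}\|i-j\|^{-\alpha}=(1+o(1))c_N$, one shows $\sum_j\widetilde s_N(i,j)m_j(z)\to\E_{W'}\big[\kappa_\sigma(W_i,W')\,\psi(W',z)\big]$, so that in the limit $m_i(z)=\psi(W_i,z)$ depends on $i$ only through $W_i$, with $\psi(w,z)=\big(-z-\E_{W'}[\kappa_\sigma(w,W')\psi(W',z)]\big)^{-1}$. For $\sigma=1$ this collapses: $\E_{W'}[\kappa_1(w,W')\psi(W',z)]=wF(z)$ with $F(z):=\E_{W}[W\psi(W,z)]$, so that
\begin{equation*}
\psi(w,z)=\frac{-1}{z+wF(z)},\qquad F(z)=\E\!\left[\frac{-W}{z+WF(z)}\right],\qquad G_{\mu_{1,\tau}}(z)=\E\!\left[\frac{-1}{z+WF(z)}\right].
\end{equation*}
Setting $\omega(z)=-z/F(z)$ one checks $G_{\mu_{1,\tau}}(z)=-G_{\mu_W}(\omega(z))/F(z)$, so this is precisely the fixed-point/subordination system that defines the free multiplicative convolution $\mu_{sc}\boxtimes\mu_W$ (in the extended sense of \cite{arizmendi2009}, since $\mu_{sc}$ is symmetric and not supported on $[0,\infty)$; cf.\ \cite{anderson2010introduction}); by uniqueness of solutions in the relevant analytic class, $\mu_{1,\tau}=\mu_{sc}\boxtimes\mu_W$. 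Equivalently, and more transparently, $\A_N$ is in distribution $D_{\sqrt W}X_N D_{\sqrt W}$ with $D_{\sqrt W}=\diag(\sqrt{W_i})$ and $X_N$ a centred matrix with distance profile $\|i-j\|^{-\alpha}/c_N$ (all row sums $\to1$); then $\ESD(X_N)\to\mu_{sc}$ and $X_N$ is asymptotically free from the independent diagonal $D_W$ — the argument being insensitive to the row-stochastic variance profile — which yields $\mu_{sc}\boxtimes\mu_W$. The unbounded weights are dealt with by truncation: replacing $W_i$ by $W_i\wedge K$ changes the ESD by at most $2\#\{i:W_i>K\}/N^d$ in Lévy--Prokhorov distance (a rank bound), and $\mu_{sc}\boxtimes\mu_{W\wedge K}\to\mu_{sc}\boxtimes\mu_W$ by continuity of $\boxtimes$.

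\noindent\textbf{Step 2: the power-law tail.} By Theorem~\ref{theorem:absolutecontinuity}, $\mu_{1,\tau}$ has a density $f$; being the free multiplicative convolution of the symmetric $\mu_{sc}$ with a measure on $[0,\infty)$ it is symmetric, and its support is all of $\mathbb{R}$. It suffices to show $f(x)\sim(\tau-1)\,\big(m_1(\mu_W)\big)^{\tau-1}x^{-(2\tau-1)}$ as $x\to\infty$ and integrate. I would extract this from the scalar system of Step~1 via $f(x)=\tfrac1\pi\,\mathrm{Im}\,G_{\mu_{1,\tau}}(x+\mathrm{i}0)$. From $F(z)=\E[-W/(z+WF)]$ one gets $F(z)=-m_1(\mu_W)/z+o(1/z)$ as $z\to\infty$, so for real $x\to\infty$ the integrand $-1/(x+wF(x+\mathrm{i}0))$ develops a resonance at $w_\ast=-x/\mathrm{Re}\,F(x+\mathrm{i}0)\sim x^2/m_1(\mu_W)$; near $w_\ast$ one writes $x+wF=\mathrm{Re}\,F\cdot(w-w_\ast)+\mathrm{i}\,w\,\mathrm{Im}\,F$ and evaluates the resulting Lorentzian, which is concentrated at $w_\ast$ because $\mathrm{Im}\,F(x+\mathrm{i}0)\to0$. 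The Pareto density $\mathbf P(W\in\dd w)=(\tau-1)w^{-\tau}\dd w$ enters at $w_\ast$, the factor $\mathrm{Im}\,F$ cancels, and one is left with
\begin{equation*}
\mathrm{Im}\,G_{\mu_{1,\tau}}(x+\mathrm{i}0)\;\sim\;\frac{\pi(\tau-1)\,w_\ast^{-\tau}}{|\mathrm{Re}\,F(x+\mathrm{i}0)|}\;\sim\;\pi(\tau-1)\,\big(m_1(\mu_W)\big)^{\tau-1}\,x^{-(2\tau-1)} .
\end{equation*}
Integrating, $\mu_{1,\tau}(x,\infty)=\int_x^\infty f\sim(\tau-1)\big(m_1(\mu_W)\big)^{\tau-1}\int_x^\infty t^{-(2\tau-1)}\dd t=\tfrac12\big(m_1(\mu_W)\big)^{\tau-1}x^{-2(\tau-1)}$, as claimed.

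\noindent\textbf{Expected obstacle.} The subtle point is the constant. A naive ``one spike per vertex'' heuristic — a weight $w$ produces eigenvalues near $\pm\sqrt{w\,m_1(\mu_W)}$, hence $\mu_{1,\tau}(x,\infty)\approx\mathbf P(W>x^2/m_1(\mu_W))=\big(m_1(\mu_W)\big)^{\tau-1}x^{-2(\tau-1)}$ — gives the correct exponent $2(\tau-1)$ but a prefactor too large by a factor $2$: the self-consistent feedback (the derivative $1+w\,\partial_x\mathrm{Re}\,F$ equals $2$ at the resonance) smears each weight class into a bump of mass $\tfrac12$ on the positive half-line, which is exactly the origin of the $\tfrac12$. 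Obtaining it therefore requires the full resonance analysis of the fixed-point equation rather than a perturbative spike argument. On the side of Step~1, the main technical burden is carrying the reductions of Theorem~\ref{theorem:main} through at $\sigma=1$ with $\tau\in(2,3)$, where $\mu_W$ has infinite variance: the diagonal $D_W$ is then unbounded, and control of the mean (rank and norm) part and of asymptotic freeness has to be regained by truncation together with the continuity of $\boxtimes$.
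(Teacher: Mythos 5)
You take a genuinely different route from the paper for both halves of the theorem, and while the first half is a valid alternative, the second is not yet a proof.

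\emph{Identification.} The paper's Step~1 stays entirely on the moment side: when $\sigma=1$ the kernel factorises over the tree $G_{\gamma\pi}$, so the expression \eqref{eq:limitingmoments} collapses to
$\sum_{\pi\in NC_2(2k)}\prod_{i=1}^{k+1}\E[(W_1^m)^{\ell_i}]$, which is exactly the moment formula for $\mu_{sc}\boxtimes\mu_{W,m}$ (\citet[Theorem 14.4]{Nica:Speicher}); the $m\to\infty$ limit is then handled by squaring (to land in one-sided measures) and using continuity of $\boxtimes$ from \citet{bercovici1993free} together with the symmetry lemma of \citet{arizmendi2009}. You instead argue via the scalar reduction of the vector Dyson equation, or equivalently via the algebraic factorisation of the Gaussianised matrix as $D_{\sqrt{W^m}}\,\mathcal G\,D_{\sqrt{W^m}}$ plus asymptotic freeness of $\mathcal G$ from the diagonal. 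That is correct in spirit, and the rank-based truncation you propose actually gives a sharper rate ($m^{-(\tau-1)}$) than the paper's Hoffman--Wielandt estimate ($m^{2-\tau}$). But the assertion that the long-range Gaussian matrix $\mathcal G$ with the non-flat, merely row-stochastic variance profile $\|i-j\|^{-\alpha}/c_N$ is asymptotically free from the independent diagonal is not a quotable black-box statement: in this paper it is precisely the content of the Wick/partition computation in Proposition~\ref{prop:GaussianESD} and Claim~\ref{claim:count}. So your derivation silently re-uses the paper's hardest combinatorial input.

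\emph{Tail.} Here the gap is more serious. The paper reduces $\mu_{1,\tau}(x,\infty)=\tfrac12(\mu_{sc}\boxtimes\mu_W)^2(x^2,\infty)=\tfrac12\,\mu_{sc}^2\boxtimes\mu_W\boxtimes\mu_W(x^2,\infty)$ and then applies a free analogue of Breiman's lemma together with the known tail of $\mu_W\boxtimes\mu_W$, both from \citet{Bartosz:Kamil}; the $\tfrac12$ is pure symmetry. Your resonance/Lorentzian analysis of the fixed point $F(z)=\E[-W/(z+WF(z))]$ lands on the right exponent and the right constant, but as written it is a heuristic rather than a proof: you would still have to (i) establish boundary values $F(x+\dot\iota 0)$ exist and satisfy $\Im F(x+\dot\iota 0)\to 0$ fast enough that the integrand is Lorentzian-dominated at $w_\ast\sim x^2/m_1(\mu_W)$, (ii) control the $w$-integral away from the resonance window, and (iii) justify integrating the pointwise density asymptotics to get the survival function. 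None of this is automatic in the infinite-variance regime $\tau\in(2,3]$, where $F$ has no second-order expansion in $1/z$ and the Stieltjes machinery in Section~\ref{sec:stieltjes} is explicitly restricted to $\tau>3$. The paper's citation route sidesteps all of these estimates, which is exactly why it is the cleaner path here.

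\end{document}
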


In the general case, it is hard to explicitly identify the limiting measure, so we present some characterisations of it. Since we do not impose that $\tau>3$ and consequently the weights can have infinite variance, it is not immediate if the second moment of the limiting measure is non-degenerate and finite. We prove this in the following result.

\begin{theorem}[{Non-degeneracy of the limiting measure}]\label{theorem:secondmoment}

Under the assumptions of Theorem \ref{theorem:main}, the second moment of the limiting measure $\mu_{\sigma,\tau}$ is given by
$$\int_{\mathbb R} x^2 \mu_{\sigma,\tau}(\De x)= (\tau-1)^2\int_1^\infty\int_1^\infty \frac{1}{(x\wedge y)^{\tau-\sigma}(x\vee y)^{\tau-1}} \De x \De y\in (0,\infty).$$ 
Moreover, for $p\in \mathbb N$ and $p< ({\tau-1})/({\sigma\vee 1})$, we have $\int_{\mathbb{R}} \left|x\right|^{2p} \mu_{\sigma,\tau}(\De \, x)<\infty$. 
\end{theorem}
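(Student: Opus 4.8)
\emph{Strategy.} I will prove both assertions by a method-of-moments computation on $\A_N$, transferred to $\mu_{\sigma,\tau}$ via the convergence in Theorem~\ref{theorem:main}. For the closed formula the key fact is that $\lim_{N}\E_{\prob}[N^{-d}\Tr(\A_N^2)]=\E[\kappa_\sigma(W_1,W_2)]$; the right-hand side is just $\E[\kappa_\sigma(W_1,W_2)]$ written as an integral against the $(W_1,W_2)$-density, which rewrites in the stated $\min/\max$ form and is finite and positive under the standing hypotheses. For the finiteness of the $2p$-th moments I will bound $\E_\prob[N^{-d}\Tr(\A_N^{2p})]$ by a sum over tree-shaped closed walks, with weight factors controlled by moments of the form $\E[W^{(\sigma\vee 1)\deg(a)}]$, and then invoke Fatou.

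\emph{The second moment.} Since $\A_N$ has entries in $\{0,c_N^{-1/2}\}$ and vanishing diagonal, $N^{-d}\Tr(\A_N^2)=(N^dc_N)^{-1}\sum_{i\ne j}\mathbb{A}_{\mathbb{G}_N}(i,j)$, hence $\E_\prob[N^{-d}\Tr(\A_N^2)]=(N^dc_N)^{-1}\sum_{i\ne j}\E[p_{ij}]$. Splitting $p_{ij}=\kappa_\sigma(W_i,W_j)\|i-j\|^{-\alpha}-(\kappa_\sigma(W_i,W_j)\|i-j\|^{-\alpha}-1)^{+}$ and using $\sum_{i\ne j}\|i-j\|^{-\alpha}=N^dc_N$, the leading term is exactly $\E[\kappa_\sigma(W_1,W_2)]$, while the correction is at most $(N^dc_N)^{-1}\sum_{i\ne j}\|i-j\|^{-\alpha}g(\|i-j\|)$ with $g(r):=\E[\kappa_\sigma(W_1,W_2)\one_{\{\kappa_\sigma(W_1,W_2)>r^\alpha\}}]$; once $\E[\kappa_\sigma(W_1,W_2)]<\infty$ is known we have $g(r)\downarrow 0$, and splitting the sum at $\|i-j\|=R$ bounds the correction by a close-pair part of order $R^{d-\alpha}/N^{d-\alpha}$ plus a far-pair part $\le g(R)$, which vanishes on letting $N\to\infty$ and then $R\to\infty$. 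Thus $\lim_N\E_\prob[N^{-d}\Tr(\A_N^2)]=\E[\kappa_\sigma(W_1,W_2)]$, and combining this with the operator-valued semicircle description of $\mu_{\sigma,\tau}$ from Theorem~\ref{theorem:main} (whose second moment is read off directly from its covariance) gives $\int_{\mathbb R}x^2\,\dd\mu_{\sigma,\tau}=\E[\kappa_\sigma(W_1,W_2)]$. Using $(w\wedge v)(w\vee v)=wv$ and $(w\wedge v)^\sigma(w\vee v)=\kappa_\sigma(w,v)$, the density $(\tau-1)^2(xy)^{-\tau}\one_{\{x,y\ge 1\}}$ of $(W_1,W_2)$ turns $\E[\kappa_\sigma(W_1,W_2)]$ into $(\tau-1)^2\int_1^\infty\!\int_1^\infty(x\wedge y)^{-(\tau-\sigma)}(x\vee y)^{-(\tau-1)}\,\dd x\,\dd y$. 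This is finite: by symmetry it equals $2(\tau-1)^2(\sigma-\tau+1)^{-1}\int_1^\infty(y^{\sigma-2\tau+2}-y^{1-\tau})\,\dd y$, and both exponents are $<-1$ because $\tau>2$ and $\sigma<\tau-1<2\tau-3$; and it is $\ge\E[W_1\vee W_2]\ge 1>0$ since $W_i\ge 1$. This is the displayed identity.

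\emph{Higher moments.} For $p\in\mathbb N$, $\E_\prob[N^{-d}\Tr(\A_N^{2p})]=(N^dc_N^{p})^{-1}\sum\E[\prod_{\mathrm{edges}}p_{ab}]$ over closed walks of length $2p$ on $\Ver_N$, using $\mathbb{A}_{\mathbb{G}_N}(a,b)^2=\mathbb{A}_{\mathbb{G}_N}(a,b)$ and conditional independence of edges. After normalisation only walks whose step-multigraph is, to leading order, a tree $T$ with at most $p$ edges (each traversed exactly twice) contribute $\Theta(1)$; there the weight factor equals $\E[\prod_{\{a,b\}\in T}\kappa_\sigma(W_a,W_b)]\prod_{\{a,b\}\in T}\|a-b\|^{-\alpha}$ up to a negligible $\wedge 1$-truncation term handled as above. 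From $\kappa_\sigma(w,v)=(w\vee v)(w\wedge v)^\sigma\le(wv)^{\sigma\vee 1}$ one gets $\E[\prod_{\{a,b\}\in T}\kappa_\sigma(W_a,W_b)]\le\prod_a\E[W^{(\sigma\vee 1)\deg_T(a)}]$, finite whenever $(\sigma\vee 1)\deg_T(a)<\tau-1$ for every vertex $a$; a tree with at most $p$ edges has maximal degree $\le p$, so this holds for all admissible shapes provided $p<(\tau-1)/(\sigma\vee 1)$ (the star $K_{1,p}$ shows the requirement $p(\sigma\vee 1)<\tau-1$ is sharp for that shape). The distance sums $(N^dc_N^{p})^{-1}\sum_{\mathrm{walks\ of\ shape\ }T}\prod\|a-b\|^{-\alpha}$ converge to finite constants — here $c_N\sim c_0N^{d-\alpha}$ enters — and there are finitely many shapes, so $\limsup_N\E_\prob[N^{-d}\Tr(\A_N^{2p})]<\infty$. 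Applying Fatou to $x\mapsto|x|^{2p}\wedge M^{2p}$ together with Theorem~\ref{theorem:main} then yields $\int_{\mathbb R}|x|^{2p}\,\dd\mu_{\sigma,\tau}\le\limsup_N\E_\prob[N^{-d}\Tr(\A_N^{2p})]<\infty$.

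\emph{Main obstacle.} The genuinely delicate step is the passage from the empirical second moment to that of $\mu_{\sigma,\tau}$: because the weights can have infinite variance, $\{x^2\}$ need not be uniformly integrable along $\ESD(\A_N)$, so even knowing $N^{-d}\Tr(\A_N^2)\to\E[\kappa_\sigma(W_1,W_2)]$ in $\prob$-probability (which also follows from a short variance estimate exploiting conditional independence of edges and the $\wedge 1$ truncation) the Portmanteau theorem applied to $x^2\wedge M^2$ only gives the inequality ``$\le$''. Securing the reverse inequality is what forces the use of the structural (operator-valued semicircle) representation of $\mu_{\sigma,\tau}$ provided by Theorem~\ref{theorem:main}, or else a careful weight-truncation comparison — bounding the L\'evy distance between $\ESD(\A_N)$ and its $L$-truncated analogue by a Hoffman--Wielandt-type estimate and then letting $L\to\infty$ using the monotone convergence $\E[\kappa_\sigma(W_1\wedge L,W_2\wedge L)]\uparrow\E[\kappa_\sigma(W_1,W_2)]$. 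The remaining burden is the walk-and-tree bookkeeping in the higher-moment step, where one must check that the threshold on $p$ produced by the tree-degree constraint is exactly $(\tau-1)/(\sigma\vee 1)$.
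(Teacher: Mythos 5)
Your computation that $\lim_N\E_\prob[N^{-d}\Tr(\A_N^2)]=\E[\kappa_\sigma(W_1,W_2)]$ is fine, and you correctly identify the crux: since the weights may have infinite variance, the ESD need not satisfy uniform integrability of $x^2$, so weak convergence plus convergence of the empirical second moment does not by itself give $\int x^2\,\dd\mu_{\sigma,\tau}=\E[\kappa_\sigma(W_1,W_2)]$. However, both of your proposed fixes have a problem, and the gap is not closed.

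\emph{First fix (operator-valued semicircle).} You invoke Theorem~\ref{theorem:stieltjesfinal} (what you call the ``operator-valued semicircle description'') to read off the second moment of $\mu_{\sigma,\tau}$. But that theorem is proved only under the stronger hypotheses $\tau>3$ and $\sigma<\tau-2$, whereas Theorem~\ref{theorem:secondmoment} is stated for all $\tau>2$ and $\sigma\in(0,\tau-1)$. So this route cannot cover the full range of parameters; in particular it says nothing in the regime $2<\tau\le 3$ where the weights have infinite variance and the difficulty is sharpest.

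\emph{Second fix (truncation).} You propose to bound the \emph{L\'evy distance} between $\ESD(\A_N)$ and its $L$-truncated analogue via a Hoffman--Wielandt estimate, and then appeal to monotone convergence of $\E[\kappa_\sigma(W_1\wedge L,W_2\wedge L)]$. That is not enough: convergence in L\'evy distance of $\mu_{\sigma,\tau,m}\to\mu_{\sigma,\tau}$ controls nothing about second moments, and monotone convergence of the \emph{truncated} second moments $\int x^2\,\dd\mu_{\sigma,\tau,m}$ does not by itself transfer to $\int x^2\,\dd\mu_{\sigma,\tau}$ without a uniform-integrability input (which, in the infinite-variance regime, is exactly what is hard). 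What the paper actually does is work with the \emph{ordered-eigenvalue} (quantile) version of Hoffman--Wielandt, namely \eqref{HW2}, applied on the enlarged probability space $\Omega\times(0,1)$ to the random variables $Z_{k,m}(\omega,x)=\lambda_{\lceil N_k x\rceil}(B_{N_k,m}(\omega))$, establishing an $L^2(\ps)$-convergence. The key steps are: (i) $Z_{k,m}\to Z_{\infty,m}$ in $L^2(\ps)$, using a.s.\ convergence of quantile functions plus uniform integrability of $Z_{k,m}^2$ (checked via boundedness of fourth moments of $\ESD(B_{N_k,m})$); (ii) a Hoffman--Wielandt bound $\mathsf E[(Z_{k,m}-Z_{k,\infty})^2]\le N_k^{-1}\E\Tr[(B_{N_k,m}-B_{N_k,\infty})^2]=O(m^{2-\tau})$, uniform in $k$; (iii) Lemma~\ref{lemma:slutsky} in $L^2(\ps)$ to deduce that $Z_{\infty,m}\to Z_\infty$ in $L^2$, where $Z_\infty$ has law $\mu_{\sigma,\tau}$. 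Step (iii) is what gives the exact equality of second moments, and it hinges on the $L^2$ framework, not the L\'evy metric. Your sketch conflates the two Hoffman--Wielandt inequalities and does not supply the $L^2$ structure.

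\emph{Higher moments.} This part of your proposal is essentially correct and close in spirit to the paper: a Fatou argument combined with the bound $\kappa_\sigma(w,v)\le(wv)^{\sigma\vee1}$, noting that a block of $\gamma\pi$ of size $p$ (corresponding to a star in $G_{\gamma\pi}$) produces a factor $\E[W^{p(\sigma\vee1)}]$, which is finite iff $p(\sigma\vee1)<\tau-1$. The paper establishes this directly from the closed-form moment expression \eqref{eq:limitingmoments} for the truncated measures $\mu_{\sigma,\tau,m}$ and applies Fatou along the weak limit $\mu_{\sigma,\tau,m}\to\mu_{\sigma,\tau}$, which is cleaner than working with the raw Bernoulli walk combinatorics on $\A_N$; but your version, once the $\wedge 1$-truncation is handled (you defer that), arrives at the same threshold.
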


We state the following result as an independent theorem as the absolute continuity of the KBRG model deserves to be treated separately. 
\begin{theorem}[{Absolute continuity}]\label{theorem:absolutecontinuity}
 Let $\tau>2$ and $\sigma\in (0,\tau-1)$, then $\mu_{\sigma,\tau}$ is symmetric and absolutely continuous with respect to the Lebesgue measure on $\Rr$.
\end{theorem}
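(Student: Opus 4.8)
The plan is to exhibit $\mu_{\sigma,\tau}$ as a free additive perturbation of a semicircular law and then to invoke the regularising effect of free convolution with the semicircle distribution. Write $\mathcal{B}=L^\infty(\mu_W)$ for the abelian algebra generated by the weights. From the analysis behind Theorem~\ref{theorem:main} (or, if preferred, from a direct moment computation along the lines used there), $\mu_{\sigma,\tau}$ is the scalar distribution of a self-adjoint, $\mathcal{B}$-valued semicircular element $\mathbf{a}$ in some tracial $W^*$-probability space, whose covariance map $\eta\colon\mathcal{B}\to\mathcal{B}$ is
\[
(\eta f)(w)=\int_{[1,\infty)}\kappa_\sigma(w,v)\,f(v)\,\mu_W(\dd v).
\]
Here the spatial part $\|i-j\|^{-\alpha}/c_N$ of the variance profile is translation invariant and, because $\alpha<d$, behaves for moment purposes like the uniform profile $1/N^d$, contributing only the normalisation already absorbed into $c_N$ (this is also why $\mu_{\sigma,\tau}$ carries no $\alpha$-dependence). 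Symmetry is then immediate: $-\mathbf{a}$ has covariance $(-1)^2\eta=\eta$, a $\mathcal{B}$-valued semicircular element is determined by its covariance, so $\mathbf{a}$ and $-\mathbf{a}$ have the same $\mathcal{B}$-distribution and $\mu_{\sigma,\tau}$ is invariant under $x\mapsto-x$.

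For absolute continuity the key elementary input is that $\kappa_\sigma(w,v)=(w\vee v)(w\wedge v)^\sigma\ge 1$ for all $w,v\ge 1$, since the weights are supported in $[1,\infty)$. Hence $\eta$ splits as $\eta=\eta_0+\eta_1$, where $(\eta_0 f)(w)=\int f\,\dd\mu_W$ is the (scalar) covariance of a standard semicircular element and $(\eta_1 f)(w)=\int(\kappa_\sigma(w,v)-1)f(v)\,\mu_W(\dd v)$ maps nonnegative functions to nonnegative functions, hence is completely positive (every positive map between abelian $C^*$-algebras is). By standard facts about operator-valued semicircular elements one may therefore realise $\mathbf{a}=\mathbf{s}+\mathbf{a}_1$, with $\mathbf{a}_1$ a $\mathcal{B}$-valued semicircular element of covariance $\eta_1$ and $\mathbf{s}$ a standard semicircular element; because the covariance $\eta_0$ of $\mathbf{s}$ is scalar-valued, $\mathbf{s}$ is a genuine semicircular element $*$-free from $W^*(\mathcal{B},\mathbf{a}_1)$, not merely free over $\mathcal{B}$. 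Passing to scalar distributions,
\[
\mu_{\sigma,\tau}=\mu_{sc}\boxplus\rho_{\sigma,\tau},
\]
where $\mu_{sc}$ is the standard semicircle law and $\rho_{\sigma,\tau}$ is the law of $\mathbf{a}_1$. Biane's theorem on free convolution with a semicircular distribution asserts that $\mu_{sc}\boxplus\nu$ is absolutely continuous with respect to Lebesgue measure (with a bounded density, real-analytic on the interior of its support) for every probability measure $\nu$; applied with $\nu=\rho_{\sigma,\tau}$ this gives the claim.

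Since $\mu_W$ has only finitely many moments, $\mathbf{a}$ and $\mathbf{a}_1$ are in general unbounded operators affiliated to the algebra, so the cleanest route is to run the above first for weights truncated at level $T$ — where $\kappa_\sigma(w\wedge T,v\wedge T)$ is bounded, all operators are bounded, and one obtains $\mu^{(T)}_{\sigma,\tau}=\mu_{sc}\boxplus\rho^{(T)}_{\sigma,\tau}$ — and then let $T\to\infty$. The second moments $m_2(\rho^{(T)}_{\sigma,\tau})=\ep[\kappa_\sigma(W_1\wedge T,W_2\wedge T)]-1$ are bounded uniformly in $T$ by Theorem~\ref{theorem:secondmoment}, so the family $(\rho^{(T)}_{\sigma,\tau})_T$ is tight; since $\mu^{(T)}_{\sigma,\tau}\to\mu_{\sigma,\tau}$ weakly (continuity of the model in the truncation level, already used in the proof of Theorem~\ref{theorem:main}) and $\boxplus$ is weakly continuous, any subsequential limit $\rho_{\sigma,\tau}$ of $(\rho^{(T)}_{\sigma,\tau})_T$ satisfies $\mu_{sc}\boxplus\rho_{\sigma,\tau}=\mu_{\sigma,\tau}$, and Biane's theorem applies verbatim.

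The main obstacle is not the covariance decomposition, which is immediate from $\kappa_\sigma\ge 1$, but rather the structural input behind it: identifying $\mu_{\sigma,\tau}$ with the law of an operator-valued semicircular element over the weight algebra in the heavy-tailed regime, and controlling the truncation limit. Once this is granted, absolute continuity is a direct consequence of the smoothing property of the semicircle law. (Alternatively one could argue entirely at the random-matrix level: Gaussianise the centred scaled adjacency matrix by a Lindeberg swap, use $\kappa_\sigma\ge 1$ to dominate its variance profile from below by the constant profile $c/N^d$, split off an independent GOE block with that profile — which, being orthogonally invariant, is asymptotically free from the remainder $\mathbf{C}_N$ — and deduce that $\mu_{\sigma,\tau}$ is the free convolution of the semicircle law of variance $c$ with $\lim_N\ESD(\mathbf{C}_N)$, before invoking Biane; there the only additional care is to first delete a fixed-width diagonal band on which the connection probabilities may equal $1$.)
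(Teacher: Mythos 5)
Your proposal is correct and is driven by the same core insight as the paper's proof---use the pointwise bound $\kappa_\sigma\ge 1$ to split off a scalar semicircular component and then invoke Biane's regularising effect of free additive convolution with a semicircle---but you execute it at the level of operator-valued free probability rather than at the level of finite Gaussian random matrices. The paper writes, in distribution, $\B_{N,m}^{(1)}=\B_{N,m}^{(2)}+\delta\B_{N,m}^{(3)}$ with $\B_{N,m}^{(3)}$ an independent GOE, and passes to the $N\to\infty$ limit via Lemma~\ref{lemma:Fact2.1} to get $\mu_{\sigma,\tau,m}=\mu_{g_{\delta,m}}\boxplus\mu_{sc,\delta}$; you instead decompose the covariance map $\eta=\eta_0+\eta_1$ directly in the limiting algebra $L^\infty(\mu_W)$, with $\eta_0$ the scalar (state-valued) covariance, and use the standard fact that a $\mathcal{B}$-valued semicircular with scalar covariance is scalar-free from $W^*(\mathcal{B},\mathbf{a}_1)$. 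These are the same structural fact seen before and after taking the random-matrix limit, and Lemma~\ref{lemma:Fact2.1} is exactly a packaging of that asymptotic freeness. What your route buys is a cleaner proof of symmetry (it falls out of the covariance being invariant under $\mathbf{a}\mapsto-\mathbf{a}$, so you do not need to first establish absolute continuity, as the paper does); what it costs is that identifying $\mu_{\sigma,\tau}$ as the scalar law of a $\mathcal{B}$-valued semicircular element over $L^\infty(\mu_W)$ requires care in the heavy-tailed regime, which you correctly handle by truncating and passing to a weak limit via tightness and weak continuity of $\boxplus$, paralleling the paper's use of Lemma~\ref{lemma:slutsky}. Your parenthetical random-matrix alternative is essentially the paper's actual argument, except the paper dispenses with the cap $p_{ij}\wedge 1$ and the diagonal-band issue altogether by working with the $\alpha$-free Gaussian proxy matrices $B_{N,m}$ of Proposition~\ref{prop:withoutalpha}, whose entries carry variance $\kappa_\sigma$ without any truncation at $1$. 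One small remark in your favour: your convention of capping the weights at $T$ (so that $\kappa_\sigma(W_1\wedge T,W_2\wedge T)\ge 1$ persists) is the right one for this decomposition; the paper's global convention $W_i^m=W_i\one_{W_i\le m}$, if applied literally in this section, would make $\kappa_\sigma(W_i^m,W_j^m)$ vanish on a positive-probability event and destroy the pointwise lower bound needed for $g_{\delta,m}^2\ge 0$, so the capping convention should be read into that argument as well.
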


We conclude the main results by providing an analytic description of the limiting measure in terms of its Stieltjes transform when we slightly restrict our parameters. Recall that, for $z \in \mathbb{C}^+$, where $\mathbb{C}^+$ denotes the upper half-plane of the complex plane, the Stieltjes transform of a  measure $\mu$ on $\mathbb{R}$  is given by  
\begin{equation}\label{def:ST}
\St_\mu(z) = \int_{\mathbb{R}} \frac{1}{x-z} \mu(\mathrm{d}x)\,.
\end{equation}

\begin{theorem}[Stieltjes transform]\label{theorem:stieltjesfinal}
Let $0<\alpha<d$, $\tau > 3$ and $\sigma < \tau - 2$. Then there exists a unique analytic function $a^\ast$ on $\mathbb{C}^+ \times [1, \infty)$ such that  
\[
\St_{\mu_{\sigma,\tau}}(z) = \int_1^{\infty} a^\ast(z, x) \mu_W(\mathrm{d}x),
\]
where we recall that $\mu_W$ is the law of the random variable $W$. 
\end{theorem}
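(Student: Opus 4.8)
The plan is to characterise $\St_{\mu_{\sigma,\tau}}$ through the Dyson (self-consistent) equation of the operator-valued semicircular law identified in the proof of Theorem~\ref{theorem:main}: the semicircular law over the abelian algebra $L^\infty([1,\infty),\mu_W)$ with completely positive covariance given by integration against the kernel $\kappa_\sigma\ge1$. One expects
\[
\St_{\mu_{\sigma,\tau}}(z)=\int_1^\infty a^\ast(z,x)\,\mu_W(\mathrm{d}x),\qquad
a^\ast(z,x)=\frac{1}{-z-\int_1^\infty\kappa_\sigma(x,y)\,a^\ast(z,y)\,\mu_W(\mathrm{d}y)},
\]
with $a^\ast(z,\cdot)$ valued in $\mathbb{C}^+$. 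Setting $b(z,x):=\int_1^\infty\kappa_\sigma(x,y)\,a^\ast(z,y)\,\mu_W(\mathrm{d}y)$ (again $\mathbb{C}^+$-valued since $\kappa_\sigma\ge1$) recasts this as the fixed point $b=\Psi_z(b)$,
\[
\Psi_z(b)(x):=\int_1^\infty\frac{\kappa_\sigma(x,y)}{-z-b(y)}\,\mu_W(\mathrm{d}y).
\]
Because heuristically $a^\ast(z,x)\asymp (c(z)\,x)^{-1}$, i.e.\ $b(z,x)\asymp c(z)\,x$, as $x\to\infty$, I would work in the weighted Banach space $\mathcal B=\{g:[1,\infty)\to\mathbb{C}\ \text{measurable}: \|g\|_\star:=\sup_{x\ge1}|g(x)|/x<\infty\}$, with the closed convex set $\mathcal D=\{g\in\mathcal B:\Im g\ge0\}$ as state space. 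Note that any $\mathbb{C}^+$-valued solution automatically satisfies $|a^\ast(z,x)|\le(\Im z)^{-1}$ and hence $b(z,\cdot)\in\mathcal D$, so uniqueness in $\mathcal D$ will yield the uniqueness asserted in the theorem.

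The first step is that $\Psi_z$ maps $\mathcal D$ into itself: $|{-z}-b(y)|\ge\Im z$ gives $|\Psi_z(b)(x)|\le(\Im z)^{-1}\int_1^\infty\kappa_\sigma(x,y)\,\mu_W(\mathrm{d}y)\le C_0\,(\Im z)^{-1}x$ with $C_0:=\sup_{x\ge1}x^{-1}\int_1^\infty\kappa_\sigma(x,y)\,\mu_W(\mathrm{d}y)<\infty$ (finite already when $\tau>2$, by splitting at $y=x$), while $\Im\Psi_z(b)(x)=\int_1^\infty\kappa_\sigma(x,y)\,(\Im z+\Im b(y))\,|{-z}-b(y)|^{-2}\,\mu_W(\mathrm{d}y)\ge0$. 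The decisive estimate is
\[
\|\Psi_z(b_1)-\Psi_z(b_2)\|_\star\le\frac{C_1}{(\Im z)^2}\,\|b_1-b_2\|_\star,\qquad
C_1:=\sup_{x\ge1}\frac1x\int_1^\infty\kappa_\sigma(x,y)\,y\,\mu_W(\mathrm{d}y),
\]
obtained from $\bigl|(-z-b_1)^{-1}-(-z-b_2)^{-1}\bigr|\le(\Im z)^{-2}|b_1-b_2|\le(\Im z)^{-2}\|b_1-b_2\|_\star\,y$. Here the hypotheses bite: splitting $\int_1^\infty\kappa_\sigma(x,y)\,y\,\mu_W(\mathrm{d}y)$ at $y=x$, the tail piece $x^\sigma\int_x^\infty y^{2-\tau}\,\mathrm{d}y$ is finite iff $\tau>3$ (and is then $\asymp x^{\sigma+3-\tau}$), while the bulk piece $x\int_1^x y^{\sigma+1-\tau}\,\mathrm{d}y$ stays $\lesssim x$ iff $\sigma<\tau-2$ (logarithmic divergence exactly at $\sigma=\tau-2$); thus $C_1<\infty$ precisely under $\tau>3$ and $\sigma<\tau-2$. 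For $\Im z>\sqrt{C_1}$ the map $\Psi_z$ is then a strict contraction of the complete space $\mathcal D$, producing a unique fixed point $b^\ast(z,\cdot)\in\mathcal D$; $a^\ast(z,x):=(-z-b^\ast(z,x))^{-1}$ is the unique $\mathbb{C}^+$-valued solution on $\{\Im z>\sqrt{C_1}\}$, analytic in $z$ (locally uniform limit of the analytic Picard iterates in $\mathcal B$) and continuous in $x$ (dominated convergence in the fixed-point identity, since $x\mapsto\kappa_\sigma(x,y)$ is continuous).

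To reach all of $\mathbb{C}^+$ and identify the limit I would approximate by bounded weights: replacing $\mu_W$ by a truncated law $\mu_W^{(M)}$ supported in $[1,M]$ yields a Dyson equation with bounded completely positive covariance, for which the classical operator-valued semicircle theory gives a unique $\mathbb{C}^+$-valued analytic solution $a^\ast_M$ on all of $\mathbb{C}^+$, with $\St_{\mu^{(M)}_{\sigma,\tau}}(z)=\int a^\ast_M(z,x)\,\mu_W^{(M)}(\mathrm{d}x)$ and $|a^\ast_M|\le(\Im z)^{-1}$ uniformly in $M$. On $\{\Im z>\sqrt{C_1}\}$ the contraction (plus the smallness of $\Psi_{z}-\Psi_{z,M}$ as $M\to\infty$) gives $a^\ast_M\to a^\ast$; by uniform boundedness and Vitali's theorem the convergence extends locally uniformly to all of $\mathbb{C}^+$, so $a^\ast$ is analytic there, and letting $M\to\infty$ in the Dyson equation (dominated convergence, using $\int_1^\infty\kappa_\sigma(x,y)\,\mu_W(\mathrm{d}y)<\infty$) shows it solves the full equation on $\mathbb{C}^+$; uniqueness for every $z$ follows because two $\mathbb{C}^+$-valued analytic solutions agree on $\{\Im z>\sqrt{C_1}\}$, hence everywhere. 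Finally $\int_1^\infty a^\ast(z,x)\,\mu_W(\mathrm{d}x)$ is well defined ($|a^\ast|\le(\Im z)^{-1}$), and passing to the limit in $\St_{\mu^{(M)}_{\sigma,\tau}}=\int a^\ast_M\,\mathrm{d}\mu_W^{(M)}$ with $\mu^{(M)}_{\sigma,\tau}\to\mu_{\sigma,\tau}$ and $\mu_W^{(M)}\to\mu_W$ weakly gives $\St_{\mu_{\sigma,\tau}}(z)=\int_1^\infty a^\ast(z,x)\,\mu_W(\mathrm{d}x)$.

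\textbf{Main obstacle.} The crux is the construction of the weighted Banach space and the contraction estimate in the heavy-tailed regime — this is exactly what forces $\tau>3$ and $\sigma<\tau-2$, through the finiteness of $C_1$. The secondary difficulty is pushing from $\Im z$ large down to the whole upper half-plane, where $C_1/(\Im z)^2$ is no longer $<1$: I route this through the bounded-weight approximation and analytic continuation, but it could also be done directly via a stability analysis of the Dyson equation exploiting the positivity of $\Im a^\ast(z,\cdot)$.
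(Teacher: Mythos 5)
Your proposal is correct and follows the same high-level scheme as the paper (fixed point of a Dyson-type equation in a weighted Banach space, truncation to bounded weights, passage to the limit, identity theorem to extend from $\Im z$ large to all of $\mathbb{C}^+$), but several technical choices are genuinely different. The paper works with $a(z,\cdot)$ directly in the space $L^1([1,\infty),x^{-\beta}\,\mathrm{d}x)$ with $2\vee(1+\sigma)<\beta<\tau-1$, and obtains the contraction estimate after first rewriting $a(z,x)$ via the integral identity $w=\iota\int_0^\infty \e^{-\iota t w^{-1}}\,\mathrm{d}t$ (valid for $w\in\C^+$), which gives the uniform bound $|a|\le\eta^{-1}$ and the $\C^+$-valuedness for free and then a Lipschitz estimate of the exponential. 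You instead reformulate in terms of $b(z,x)=\int_1^\infty\kappa_\sigma(x,y)a(z,y)\mu_W(\mathrm{d}y)$, work in the weighted sup space $\|g\|_\star=\sup_x|g(x)|/x$, and use the algebraic resolvent estimate $|(-z-b_1)^{-1}-(-z-b_2)^{-1}|\le(\Im z)^{-2}|b_1-b_2|$; the finiteness of your $C_1$ reproduces exactly the constraints $\tau>3$, $\sigma<\tau-2$ that in the paper come from the existence of an admissible $\beta$. One buys cleaner pointwise control and a very transparent origin of the hypotheses; the other avoids sup estimates and makes the Fubini manipulations straightforward. Your treatment of the truncated case also differs: you invoke the operator-valued semicircle theory as a black box for bounded kernels, whereas the paper derives the recursive equation for $a_m(z,x)$ from the moment formula through an explicit combinatorial decomposition of non-crossing pair partitions (Lemmas~\ref{lemma:moments_Hpi} and~\ref{lemma:Hproperties}); both give the same outcome, but the paper's derivation is self-contained and independent of the general free probability machinery. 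Finally, your use of Vitali's theorem to extend the convergence $a^*_M\to a^*$ to all of $\mathbb{C}^+$ is a small add-on to the paper's identity-theorem argument (the paper passes to the limit only where the contraction constant is $<1$ and then uses analyticity of $z\mapsto a(z,x)$ directly); both are standard and correct.
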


The function $a^\ast$ in the above theorem turns out to be a fixed point of a contraction mapping on an appropriate Banach space. The equation above shares similarities with the quadratic vector equations introduced and studied in \cite{ajanki2019quadratic}, although in our setting the measures have unbounded support. The properties and the proof of Theorem \ref{theorem:stieltjesfinal} are discussed in Section \ref{sec:stieltjes}.

\begin{remark}[Higher dimensions]\label{rem:high_d}
  While we have presented our results for $0<\alpha <d$, our proofs are worked out in the $d=1$ setup. This is in order to avoid notational complications that would especially affect the clarity of Theorem~\ref{theorem:main}. The limiting spectral distribution and its properties remain unchanged for $d>1$.
\end{remark}
\subsection{Examples, simulations and discussion}

Firstly, in Figure~\ref{fig:twohist} we plot the eigenvalue distribution of the adjacency matrix of two realizations of kernel-based graphs with different parameters, indicated at the top of the image.
\begin{figure}[ht!]
 \centering
\hfill
\subfloat{\includegraphics[width=0.5\linewidth]{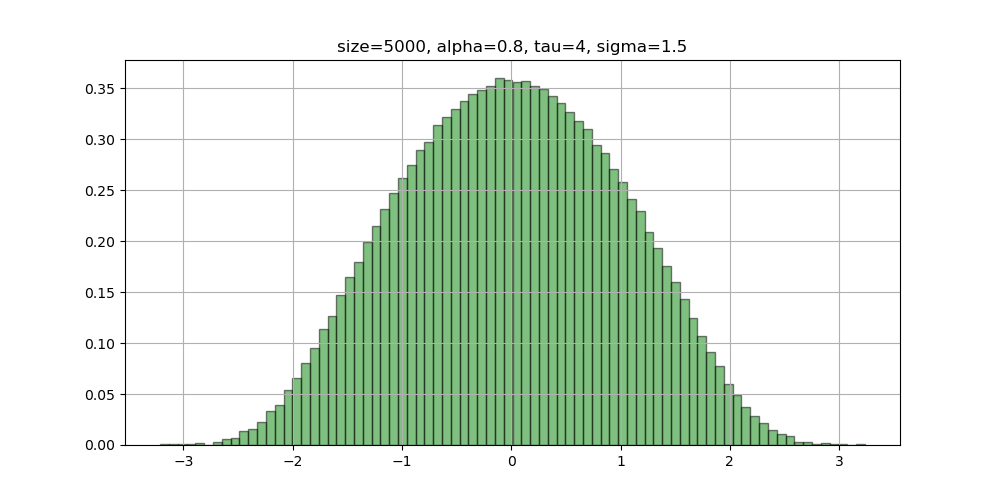}}
    \hfill    \subfloat{\includegraphics[width=0.5\linewidth]{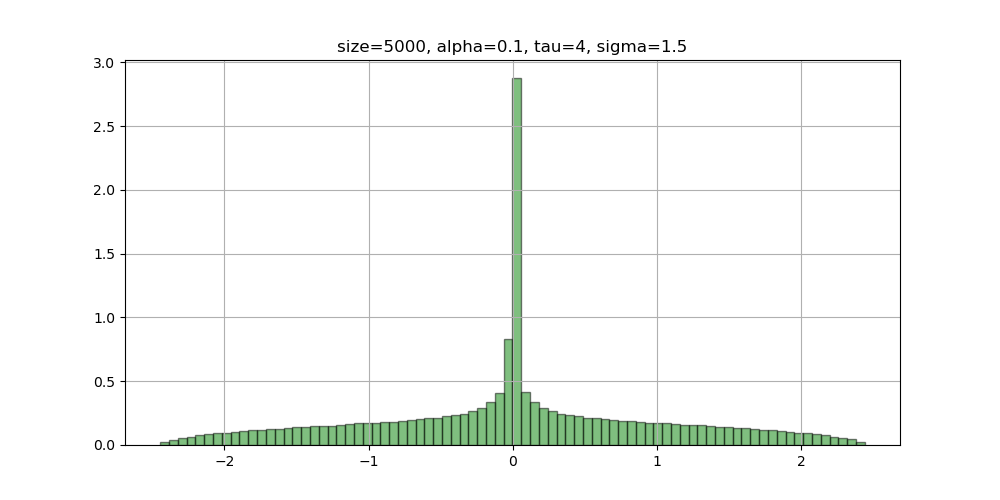}}
    \hfill
    \caption{Eigenvalue distribution for two KBRG realizations}
    \label{fig:twohist}
\end{figure}
\begin{figure}[ht!]
 \centering
\includegraphics[scale=.5]{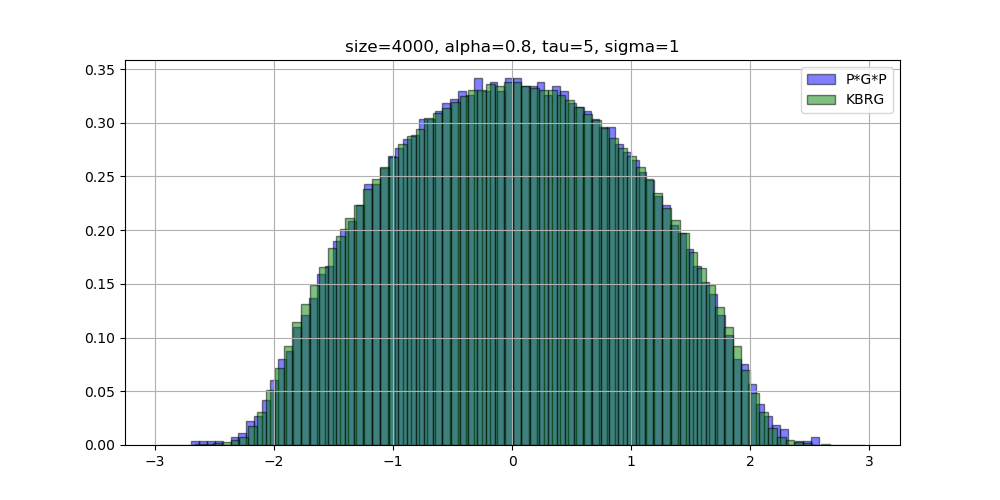}    
    \caption{KBRG eigenvalue distribution and $P_N G_N P_N$ distribution. }
    \label{fig:sigma1}
\end{figure}
Secondly, in Figure~\ref{fig:sigma1} we sample 10 realizations of scale-free percolation adjacency matrices of size $4000\times 4000$ with $\sigma=1$ and plot their eigenvalues (in green). We superpose to them the eigenvalues of the product $P_N G_N P_N$ of a GUE matrix $G_N$ with a diagonal matrix $P_N$ with i.i.d.~entries distributed as $\sqrt{Pareto(\tau)}$ (in blue). Note that by~\citet[Remark 14.2]{Nica:Speicher},~\citet[Remark 4.3]{chakrabarty2018note}, the a.s.~limiting ESD of $P_N G_N P_N$ is $\mu_{sc}\boxtimes \mu_W$. All matrices are centred and rescaled by the sample second moment.
Thirdly, to elucidate the tail behaviour of the limiting ESD when $\sigma=1$ (Theorem~\ref{theorem:tail}) we draw in Figure~\ref{fig:tailhist} the empirical survival function of the eigenvalues of a matrix of size $7000\times 7000$ in $x\geq 1.5.$ 

Finally, we provide in Figure~\ref{fig:normalesd} a simulation of the eigenvalues of the Gaussian matrix $\tilbA_{N,m,g}$ (see~\eqref{eq:Gaussianmatrix}) when $\alpha=0$ and $N=6000$. We compare this picture with the right-hand side of Figure~\ref{fig:twohist}, which has a small $\alpha$. We conjecture that the atom appearing in the latter is due to high connectivity of the kernel-based realization (if $\alpha=0$, for all $i,
\,j$ we have that $p_{ij}$ is identically one in~\eqref{connection_proba}), whilst in the Gaussian setup this trivialization does not arise.
\begin{figure}[ht!]
 \centering  \includegraphics[scale=0.6]{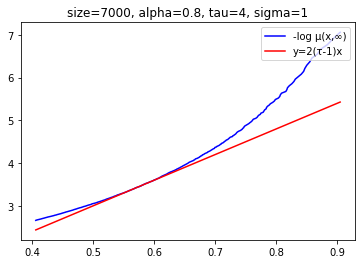}\caption{Negative of the log-empirical survival function and tails of Theorem~\ref{theorem:tail} for $x\geq 1.5$.}
    \label{fig:tailhist}
\end{figure}
% \begin{figure}[ht!]
%  \centering  \includegraphics[scale=0.6]{Figures/Figure_tails2.png}\caption{Negative of the log-empirical survival function and tails of Theorem~\ref{theorem:tail} for $1.5\leq x\leq 2.15$.}
%     \label{fig:tailhist}
% \end{figure}
\begin{figure}[ht!]
 \centering  \includegraphics[scale=0.6]{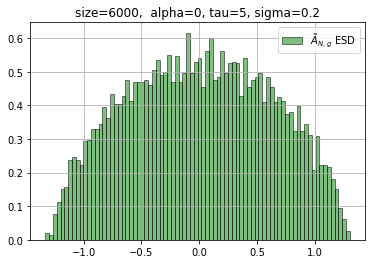}\caption{ESD for $\tilbA_{N,m,g}$.}
    \label{fig:normalesd}
\end{figure}

\begin{remark}[{Sparse case}] We expect the case $\alpha>d$ to be very different due to the sparse nature of the graph. There has been a significant development in the area of spectral properties of sparse random graphs using the techniques of local weak convergence \citep{bordenave2010resolvent, bordenave2017mean, bordenave:lelarge:salez}. However, it is not immediately clear whether these techniques can be employed in our framework in order to determine the properties of the limiting measure: the underlying random graph generated in our model will not be tree-like to begin with. We plan to address this case in a future work.
\end{remark}

\section{Notation and preliminary lemmas}\label{sec:prel_lemmas}

In this section we fix some notation and collect some technical lemmas that will be used in the proofs of our main results.

\subsection{Notation} 
%We will use the symbol $a(N)\sim b(N)$ to denote that $\lim_{N\to\infty}a(N)/b(N)=1.$ %We will write $\mathbf P$ for the law of the Pareto weights, $P^W$ for the law of the random graph given the weights $W=(W_i)_{i\in \Ver_N}$ and $\prob=\mathbf P\otimes P^W$ will be the joint law of the weights and the edge variables.
We will use the Landau notation $o_N,\,\bigO_N$ indicating in the subscript the variable under which we take the asymptotic (typically this variable will grow to infinity unless otherwise specified). Universal positive constants are denoted as $c,\,c_1,\,\ldots,$ and their value may change with each occurrence. For an $N\times N$ matrix $A=(a_{ij})_{i,\,j=1}^N$ we use $\Tr(A)\coloneqq\sum_{i=1}^N a_{ii} $ 
for the trace and $\mathrm{tr}(A)\coloneqq N^{-1}\Tr(A)$ for the normalised trace. When $n\in\N$ we write $[n]\coloneqq\{1,\,2,\,\ldots,\,n\}.$ We denote the cardinality of a set $A$ as $\#A$, and, with a slight abuse of notation, $\#\sigma$ also denotes the number of cycles in a permutation $\sigma$.

\subsection{Technical lemmas}\label{sec: technical lemmas SFP} 
For bounding the $d_L$ distance between the ESDs of two matrices, we will need the following inequality, due to Hoffman and Wielandt.

\begin{proposition}[Hoffman-Wielandt inequality{~\citep[Corollary A.41]{Bai-silverstein}}]\label{prop:hoffman-wielandt}
Let $\mathbf{A}$ and $\mathbf{B}$ be two $N \times N$ normal matrices and let  $\ESD(\A)$ and $\ESD(\mathbf{B})$  be their ESDs, respectively. Then,
\begin{equation}\label{HWinequality}
d_L\left(\ESD(\A), \ESD(\mathbf{B})\right)^3 \leq \frac{1}{N} \Tr\left[(\mathbf{A}-\mathbf{B})(\mathbf{A}-\mathbf{B})^*\right] .
\end{equation}
Here $\A^*$ denotes the conjugate transpose of $\A.$ 
Moreover, if $\A$ and $\mathbf{B}$ are two Hermitian matrices of size $N\times N$, then
\begin{equation}\label{HW2}
\sum_{i=1}^N\left(\lambda_i(\A)-\lambda_i(\mathbf{B})\right)^2 \leq \Tr[(\A-\mathbf{B})^2].
\end{equation}
\end{proposition}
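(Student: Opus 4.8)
\textbf{Proof proposal for the Hoffman--Wielandt inequality (Proposition~\ref{prop:hoffman-wielandt}).}

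The plan is to prove \eqref{HW2} first, since the normal-matrix bound \eqref{HWinequality} will follow from it essentially by the same argument after accounting for complex eigenvalues and the cube root appearing in the definition of $d_L$. For the Hermitian case: write $\A = U\diag(\lambda_i(\A))U^*$ and $\B = V\diag(\lambda_i(\B))V^*$ with $U,V$ unitary, and set $O := U^*V$, which is again unitary. First I would expand $\Tr[(\A-\B)^2] = \Tr[\A^2] + \Tr[\B^2] - 2\Tr[\A\B]$ and rewrite $\Tr[\A\B] = \sum_{i,j} \lambda_i(\A)\lambda_j(\B)\,|O_{ij}|^2$. Since $\Tr[\A^2] = \sum_i \lambda_i(\A)^2$ and likewise for $\B$, and since the doubly stochastic matrix $S := (|O_{ij}|^2)_{i,j}$ has all row and column sums equal to $1$, the whole expression becomes $\sum_{i,j} S_{ij}\big(\lambda_i(\A)-\lambda_j(\B)\big)^2$. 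So the inequality \eqref{HW2} reduces to the claim that minimising $\sum_{i,j} S_{ij}(\lambda_i(\A)-\lambda_j(\B))^2$ over all doubly stochastic $S$ is achieved, when both eigenvalue lists are written in increasing order, by the identity permutation $S_{ij} = \delta_{ij}$.

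The key combinatorial step is therefore: among all doubly stochastic matrices $S$, the linear functional $S \mapsto \sum_{i,j} S_{ij}\, c_{ij}$ with $c_{ij} = (\lambda_i(\A)-\lambda_j(\B))^2$ is minimised at an extreme point of the Birkhoff polytope, i.e.\ at a permutation matrix (Birkhoff--von Neumann theorem), and among permutation matrices the minimum is attained at the identity because the cost matrix $c_{ij}$ satisfies the Monge/rearrangement condition once the eigenvalues of both matrices are sorted increasingly. Concretely, if some optimal permutation $\pi$ has an inversion, say $\lambda_i(\A) \le \lambda_{i'}(\A)$ but $\pi$ pairs $i \leftrightarrow j'$ and $i' \leftrightarrow j$ with $\lambda_j(\B) \le \lambda_{j'}(\B)$, then swapping to $i\leftrightarrow j$, $i'\leftrightarrow j'$ does not increase the cost; this is the standard rearrangement inequality argument, and iterating removes all inversions. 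This yields $\sum_{i,j} S_{ij} c_{ij} \ge \sum_i c_{ii} = \sum_i (\lambda_i(\A)-\lambda_i(\B))^2$, which is exactly \eqref{HW2}.

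For the general normal-matrix statement \eqref{HWinequality}: by the spectral theorem for normal matrices one again diagonalises $\A,\B$ by unitaries, and the same Birkhoff argument shows $\Tr[(\A-\B)(\A-\B)^*] \ge \min_{\pi}\sum_i |\lambda_i(\A) - \lambda_{\pi(i)}(\B)|^2$, i.e.\ the right-hand side of \eqref{HWinequality} dominates (after multiplying by $N$) the minimal matched sum of squared distances between the two eigenvalue clouds in $\C$. It then remains to compare this with $d_L(\ESD(\A),\ESD(\B))^3$. For this I would use the elementary bound that if two probability measures supported on $N$-point clouds can be coupled so that $\frac1N\sum_i |x_i - y_i|^2 \le \eta^2$, then by Markov's inequality the coupling moves at most an $\eta$-fraction of mass a distance more than $\sqrt{\eta}$ times something controlled, giving $d_L \le \eta^{2/3}$; matching the exponents produces the cube in \eqref{HWinequality}. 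Alternatively one simply cites \citep[Corollary A.41]{Bai-silverstein} for this last bookkeeping step, since it is standard.

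The main obstacle is the combinatorial optimisation over doubly stochastic matrices, i.e.\ correctly invoking Birkhoff--von Neumann together with the rearrangement inequality to pin the minimiser at the sorted matching; everything else (the trace expansion, the spectral theorem, and the passage from the $\ell^2$ matching bound to the Lévy--Prokhorov distance) is routine. Since this is a cited result, in the paper itself it suffices to refer to \citep[Corollary A.41]{Bai-silverstein}, but the above is the self-contained argument behind it.
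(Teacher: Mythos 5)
The paper offers no proof of this proposition; it is simply cited from \citet[Corollary A.41]{Bai-silverstein}, so there is no internal argument to compare your proposal against. What you have written is the standard textbook proof of the Hoffman--Wielandt inequality and its L\'evy-distance corollary, and it is essentially correct: the trace identity rewrites $\Tr[(\A-\mathbf{B})^2]$ as $\sum_{i,j}S_{ij}(\lambda_i(\A)-\lambda_j(\mathbf{B}))^2$ with $S_{ij}=|(U^*V)_{ij}|^2$ doubly stochastic; linearity over the Birkhoff polytope together with the rearrangement inequality locate the minimiser at the identity permutation once both spectra are sorted; and for normal matrices the identical computation with $|\cdot|^2$ replacing $(\cdot)^2$ produces $\|\A-\mathbf{B}\|_F^2 \ge \min_\pi \sum_i |\lambda_i(\A)-\lambda_{\pi(i)}(\mathbf{B})|^2$. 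The only step worth tightening is the final exponent bookkeeping for $d_L$: writing $\eta^2 := \frac{1}{N}\sum_i |x_i - y_{\pi(i)}|^2$ for the optimal matching, Markov's inequality gives that the fraction of indices with $|x_i-y_{\pi(i)}|>\delta$ is at most $\eta^2/\delta^2$, so $d_L \le \max\bigl(\delta,\ \eta^2/\delta^2\bigr)$; equating the two terms gives the optimal threshold $\delta=\eta^{2/3}$ (not $\delta=\sqrt{\eta}$ as your parenthetical ``a distance more than $\sqrt{\eta}$'' suggests, which would only yield $d_L^4\le\eta^2$), hence $d_L\le \eta^{2/3}$ and $d_L^3 \le \eta^2 \le \frac{1}{N}\Tr[(\A-\mathbf{B})(\A-\mathbf{B})^*]$, which is precisely \eqref{HWinequality}.
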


The next two straightforward lemmas control the tail of the product of two Pareto random variables and the expectation of a truncated Pareto.
\begin{lemma}\label{lemma:twotails}
Let $X$ and $Y$ be two independent Pareto r.v.'s with parameters $\beta_1$ and $\beta_2$ respectively, with $\beta_1\leq \beta_2$. There exist constants $c_1=c_1(\beta_1,\beta_2)>0$ and $c_2=c_2(\beta_1)>0$ such that 
$$
\mathbf P(XY>t) 
    = \begin{cases} c_1t^{-\beta_1} & \text{ if }\beta_1< \beta_2\\
c_2t^{-\beta_1}\log t & \text{ if } \beta_1= \beta_2.
\end{cases}
$$
\end{lemma}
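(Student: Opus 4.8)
\textbf{Proof proposal for Lemma~\ref{lemma:twotails}.}

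The plan is to compute the tail of the product $XY$ directly by conditioning on one of the variables and integrating against its density. Recall that a Pareto random variable $X$ with parameter $\beta$ has density $f_X(x) = \beta x^{-(\beta+1)}\one_{\{x\ge 1\}}$ and tail $\mathbf P(X>x) = x^{-\beta}\one_{\{x\ge 1\}}$. Since both $X$ and $Y$ are at least $1$ almost surely, for $t\ge 1$ we have, by independence and conditioning on $Y$,
\[
\mathbf P(XY>t) = \int_1^\infty \mathbf P\!\left(X > \tfrac{t}{y}\right) f_Y(y)\,\dd y = \int_1^t \left(\frac{t}{y}\right)^{-\beta_1} \beta_2\, y^{-(\beta_2+1)}\,\dd y + \int_t^\infty \beta_2\, y^{-(\beta_2+1)}\,\dd y,
\]
where the split at $y=t$ records the fact that $\mathbf P(X>t/y)=1$ once $y\ge t$ (as $t/y\le 1$). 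The second integral equals $t^{-\beta_2}$, which is negligible compared to $t^{-\beta_1}$ when $\beta_1<\beta_2$ and of the same polynomial order but without the logarithm when $\beta_1=\beta_2$; in either case it is absorbed into the stated asymptotics (for the equality claim one should be slightly careful, but the lemma is only used asymptotically, and the constants can be adjusted — alternatively one restates with $\sim$).

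For the first integral, pull out $t^{-\beta_1}$ to get
\[
t^{-\beta_1}\,\beta_2 \int_1^t y^{\beta_1 - \beta_2 - 1}\,\dd y.
\]
When $\beta_1<\beta_2$ the exponent $\beta_1-\beta_2-1 < -1$, so $\int_1^t y^{\beta_1-\beta_2-1}\dd y \to \int_1^\infty y^{\beta_1-\beta_2-1}\dd y = \frac{1}{\beta_2-\beta_1}$ as $t\to\infty$, giving a constant $c_1 = c_1(\beta_1,\beta_2)$ (and combining with the lower-order $t^{-\beta_2}$ term one gets the exact form $c_1 t^{-\beta_1}$ up to the harmless remainder). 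When $\beta_1=\beta_2$ the exponent is exactly $-1$, so $\int_1^t y^{-1}\dd y = \log t$, yielding $c_2 t^{-\beta_1}\log t$ with $c_2 = \beta_1$. This gives both cases of the claimed dichotomy.

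There is no serious obstacle here; the only mild subtlety is matching the precise equality ``$=$'' in the statement versus an asymptotic ``$\sim$'', since the subleading $t^{-\beta_2}$ term and the behaviour near $t=1$ mean the stated identity should really be read as ``up to lower-order corrections'' (or with $c_1,c_2$ chosen to absorb them in the $t\to\infty$ limit). One should simply record this, or prove the weaker $\sim$ statement, which is all that is needed downstream when controlling the tail of products of weights. I would present the two integral computations cleanly and note the asymptotic reading of the equality.
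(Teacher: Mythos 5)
Your computation is correct and is the standard approach: condition on $Y$, split the integral at $y=t$ where $\mathbf P(X>t/y)$ saturates at $1$, and evaluate the two pieces, with the $y^{\beta_1-\beta_2-1}$ exponent producing either a convergent integral (constant) or a $\log t$ when $\beta_1=\beta_2$. The paper states this lemma without proof, so there is nothing to compare against; your observation that the displayed ``$=$'' should really be read as a $t\to\infty$ asymptotic (the exact expression picks up a subleading $t^{-\beta_2}$ term from the $y\ge t$ regime, and in the equal-parameter case an additive $t^{-\beta}$) is a fair and accurate criticism of the statement as written, and the explicit values $c_1=\beta_2/(\beta_2-\beta_1)$, $c_2=\beta_1$ you extract do carry the parameter dependencies announced in the lemma.
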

% \begin{proof}
% Suppose $\beta_1\neq \beta_2$. Since $X, Y\ge 1$, we have for $t\ge 1$,
% \begin{align*}
% \mathbf P (XY> t)&= \mathbf P( XY> t, X> t) + \mathbf P( XY> t, X\le t)\\
% &= \mathbf P( X>t) + \mathbf P( XY> t, X\le t)\\
% &= t^{-\beta_1}+\beta_1\beta_2\int_1^t x^{-\beta_1-1}\int_{t/x}^\infty y^{-\beta_2-1} \mathrm d y \mathrm d x\\
% &= t^{-\beta_1}+\beta_1\beta_2\int_1^t x^{-\beta_1-1}\left(\frac{t}{x}\right)^{-\beta_2}\mathrm d x\mic{CONST\,\, MISSING?}\\
% &= t^{-\beta_1}+ t^{-\beta_2} \frac{\beta_1\beta_2}{\beta_2-\beta_1}(t^{\beta_2-\beta_1}-1)\\
% &= t^{-(\beta_1\wedge \beta_2)}(1+ c_2) .
% \end{align*}
% The case when $\beta_1=\beta_2$ can be dealt with similarly, where the integral in the above computation becomes a logarithm.
% \end{proof}
%We will also use the following quick lemma.
\begin{lemma}\label{lem:Par_trunc}
    Let $X$ be a  Pareto random variable with law $\mathbf{P}$ and parameter $\beta>1$. For any $m>0$ it holds
    \[
    \mathbf E\left[X\one_{X\geq m}\right]=\frac{\beta}{(\beta-1)} m^{1-\beta}. 
    \]
\end{lemma}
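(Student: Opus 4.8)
\textbf{Proof plan for Lemma~\ref{lem:Par_trunc}.}

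The plan is to compute the expectation directly from the definition, integrating the tail of the Pareto distribution. First I would recall that for a Pareto random variable $X$ with parameter $\beta>1$, the density is $f_X(t) = \beta t^{-(\beta+1)}\one_{\{t\geq 1\}}$, which follows by differentiating the survival function $\mathbf{P}(X>t) = t^{-\beta}\one_{\{t\geq 1\}}$ from \eqref{eq:paretolaw} (with $\tau-1$ replaced by the generic $\beta$).

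Next I would split into the two cases $m\leq 1$ and $m>1$. If $m\leq 1$, then $\one_{\{X\geq m\}} = 1$ almost surely, and one checks that $\frac{\beta}{\beta-1}m^{1-\beta} = \frac{\beta}{\beta-1}$ is exactly $\mathbf{E}[X]$; in fact the stated formula is only claimed to hold in the relevant regime $m\geq 1$, so the main computation is for $m>1$. In that case I would write
\[
\mathbf{E}\left[X\one_{X\geq m}\right] = \int_m^\infty t\cdot \beta t^{-(\beta+1)}\,\dd t = \beta \int_m^\infty t^{-\beta}\,\dd t = \beta\left[\frac{t^{-\beta+1}}{-\beta+1}\right]_m^\infty = \frac{\beta}{\beta-1}m^{1-\beta},
\]
where the evaluation at infinity vanishes precisely because $\beta>1$. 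Alternatively, one can avoid the density entirely and integrate by parts using the tail: $\mathbf{E}[X\one_{X\geq m}] = m\,\mathbf{P}(X\geq m) + \int_m^\infty \mathbf{P}(X>t)\,\dd t = m\cdot m^{-\beta} + \int_m^\infty t^{-\beta}\,\dd t = m^{1-\beta} + \frac{m^{1-\beta}}{\beta-1} = \frac{\beta}{\beta-1}m^{1-\beta}$, which gives the same answer.

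There is essentially no obstacle here: the only point requiring care is the hypothesis $\beta>1$, which is exactly what guarantees both the finiteness of the integral and the convergence of the boundary term at infinity. This is a routine computation, and either of the two approaches above furnishes a complete proof.
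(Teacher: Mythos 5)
Your proof is correct, and the computation is the standard one; the paper leaves this lemma unproved as a routine exercise. Both of your routes (density integration and the layer-cake / tail-integration form) are valid and give the claimed identity for $m\geq 1$.

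One small remark: your sanity-check for the case $m\leq 1$ contains a slip. You write that $\frac{\beta}{\beta-1}m^{1-\beta}=\frac{\beta}{\beta-1}$, but this holds only at $m=1$; for $0<m<1$ we have $m^{1-\beta}>1$, so the displayed formula actually \emph{overstates} $\mathbf{E}[X\one_{X\geq m}]=\mathbf{E}[X]=\frac{\beta}{\beta-1}$. You correctly diagnose the resolution, namely that the identity is only meaningful for $m\geq 1$ (where the paper uses it, e.g.\ in the truncation Lemma~\ref{lemma:truncation} with $m>1$), but the paper's phrase ``for any $m>0$'' is slightly too broad as stated — strictly it should read $m\geq 1$. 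This does not affect any of the paper's subsequent estimates, and your main computation for $m\geq 1$ is complete and correct.
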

We state one final auxiliary lemma related to the approximation of sums by integrals.
\begin{lemma}\label{lem:sumtoint}
    Let $\beta\in (0,\,1]$. Then there exists a constant $c_1=c_1(\beta)>0$ such that 
    \begin{equation}\label{eq:alpha_sum_bound}
    \frac{1}{N}\sum_{i\neq j\in \Ver_N}\frac{1}{\|i-j\|^\beta}\sim c_1 \max\{N^{1-\beta},\,\log N\}.
    \end{equation}
    If instead $\beta>1$, there exists a constant $c_2>0$ such that 
    \[
    \frac{1}{N}\sum_{i\neq j\in \Ver_N}\frac{1}{\|i-j\|^\beta}\sim c_2\, .
    \]
\end{lemma}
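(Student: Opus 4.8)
\textbf{Proof plan for Lemma~\ref{lem:sumtoint}.} The statement is a purely deterministic estimate comparing a discrete sum over the torus $\Ver_N = \{1,\dots,N\}^d$ to its continuous analogue. Throughout I will work in dimension $d=1$ as stated in Remark~\ref{rem:high_d}; the higher-dimensional case is identical up to replacing $N$ by $N^d$ as the volume factor and keeping the exponent $\beta$ fixed on the distance. The plan is to count, for each fixed $i$, the number of vertices $j$ at torus distance exactly $k$, which on the one-dimensional torus is $2$ for each $k\in\{1,\dots,\lfloor N/2\rfloor\}$ (with the usual parity correction at $k=N/2$ when $N$ is even). Summing over $i$ contributes a factor $N$, which cancels the $1/N$ prefactor, so that
\[
\frac1N\sum_{i\neq j\in\Ver_N}\frac{1}{\|i-j\|^\beta} \;=\; 2\sum_{k=1}^{\lfloor N/2\rfloor}\frac{1}{k^\beta} \;+\; O\!\big(N^{-\beta}\big).
\]
Everything then reduces to the asymptotics of the partial sums of $\sum_k k^{-\beta}$.

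For the three regimes I would invoke the standard integral comparison $\sum_{k=1}^{n} k^{-\beta} = \int_1^n x^{-\beta}\,dx + O(1+n^{-\beta})$, valid since $x\mapsto x^{-\beta}$ is monotone decreasing. When $\beta\in(0,1)$ the integral is $\frac{n^{1-\beta}-1}{1-\beta}$, which grows like $n^{1-\beta}$ and dominates the $O(1)$ error, giving $\sum_{k=1}^{\lfloor N/2\rfloor}k^{-\beta}\sim \frac{(N/2)^{1-\beta}}{1-\beta}$, hence the claimed $\sim c_1 N^{1-\beta}$ with $c_1 = c_1(\beta) = \frac{2}{1-\beta}(1/2)^{1-\beta} = \frac{2^\beta}{1-\beta}$. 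When $\beta=1$ the integral is $\log n$, so the sum is $\sim \log(N/2)\sim\log N$, and the $\max$ in the statement just records that in the boundary case $\beta = 1$ the term $\log N$ is the surviving one (for $\beta<1$ one has $N^{1-\beta}\gg\log N$, so the $\max$ is harmless). When $\beta>1$ the series $\sum_k k^{-\beta}$ converges to $\zeta(\beta)<\infty$, so the left-hand side converges to $c_2 = 2\zeta(\beta)$, a finite positive constant independent of $N$; the tail $\sum_{k>N/2}k^{-\beta} = O(N^{1-\beta})\to 0$ is negligible.

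The only genuinely fiddly point — and it is minor — is bookkeeping the torus geometry: on $\{1,\dots,N\}$ the torus distance $\|i-j\| = \min(|i-j|, N-|i-j|)$ takes each value $k\in\{1,\dots,\lfloor N/2\rfloor\}$ exactly twice per source vertex $i$ when $N$ is odd, while if $N$ is even the extremal value $k=N/2$ is attained only once. This parity correction changes the constant only by an additive $O(N^{-\beta}) = o(1)$ term in the $\beta\le 1$ cases and is absorbed into the $\sim$, and in the $\beta>1$ case it likewise does not affect the limiting constant $2\zeta(\beta)$. In $d>1$ the analogous count — the number of lattice points in the $\ell^1$-ball of torus radius $k$ — is $\Theta(k^{d-1})$, so one compares instead with $\sum_k k^{d-1-\beta}$ and recovers the same trichotomy at the threshold $\beta = d$; since the lemma is only invoked with the exponent already matched to the ambient dimension, the stated one-dimensional form with its $N$ (rather than $N^d$) normalization is exactly what the later sections use, and no further work is needed. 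I expect no real obstacle here: this is a routine summation-versus-integration estimate, and the main thing to get right is simply tracking which of $N^{1-\beta}$ and $\log N$ wins at $\beta=1$ and making sure the constants $c_1,c_2$ are declared to depend only on $\beta$ (and, implicitly, $d$), as the statement requires.
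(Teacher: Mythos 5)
Your proof is correct. The paper does not supply a proof of Lemma~\ref{lem:sumtoint}---it is stated as a standard auxiliary fact---but your argument (torus translation invariance to reduce the double sum to a single partial sum $2\sum_{k\le N/2}k^{-\beta}$, followed by integral comparison in the three regimes $\beta<1$, $\beta=1$, $\beta>1$) is precisely the routine estimate the authors have in mind, and the computed constants $c_1=2^\beta/(1-\beta)$ (for $\beta<1$), $c_1=2$ (for $\beta=1$), and $c_2=2\zeta(\beta)$ check out.
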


We end this section by quoting, for the reader's convenience, the following lemma from \citet[ Fact 4.3]{Chakrabarty:Hazra:Sarkar:2016}.
\begin{lemma}\label{lemma:slutsky} Let $(\Sigma, d)$ be a complete metric space, and let $(\Omega, \mathcal{A}, P)$ be a probability space. Suppose that $\left(X_{m n}:(m, n) \in\{1,2, \ldots, \infty\}^2 \backslash\{\infty, \infty\}\right)$ is a family of random elements in $\Sigma$, that is, measurable maps from $\Omega$ to $\Sigma$, the latter being equipped with the Borel $\sigma$-field induced by $d$. Assume that
\begin{enumerate}[label=(\arabic*),ref=(\arabic*)]
\item\label{item:1ar}for all fixed $1 \leq m<\infty$
$$
 \lim_{n\to \infty}d\left(X_{m n}, X_{m \infty}\right) =0 \text{ in $P$-probability}.$$
\item\label{item:2ar} For all $\varepsilon>0$,
$$
\lim _{m \rightarrow \infty} \limsup _{n \rightarrow \infty} P\left(d\left(X_{m n}, X_{\infty n}\right)>\varepsilon\right)=0 .
$$
\end{enumerate}

Then, there exists a random element $X_{\infty \infty}$ of $\Sigma$ such that
\begin{equation}\label{eq:450}
\lim_{m\to\infty}d\left(X_{m \infty}, X_{\infty \infty}\right) = 0 \text{ in $P$-probability}
\end{equation}
and
$$
 \lim_{n\to \infty}d\left(X_{\infty n}, X_{\infty \infty}\right) = 0\text{ in $P$-probability}.
$$
Furthermore, if $X_{m \infty}$ is deterministic for all $m$, then so is $X_{\infty \infty}$, and  \eqref{eq:450} simplifies to
\begin{equation}\label{eq:451}
\lim _{m \rightarrow \infty} d\left(X_{m \infty}, X_{\infty \infty}\right)=0 .
\end{equation}

\end{lemma}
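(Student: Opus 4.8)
The plan is to prove that the diagonal family $(X_{m\infty})_{m\ge 1}$ is Cauchy in $P$-probability, to extract its limit $X_{\infty\infty}$ using the completeness of $\Sigma$, and then to show that $(X_{\infty n})_n$ converges to the same limit; the only tool is the triangle inequality for $d$ combined with a union bound and a careful ordering of the two limits. There is no serious obstruction here --- the statement is a ``Slutsky lemma for double arrays'' --- and the two points that need care are the bookkeeping of the four-term triangle inequality below (in particular that $\limsup_{n}$ is taken only after $m,m'$ have been fixed) and the measurability of the almost surely constructed limit when $\Sigma$ is not assumed separable.

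\emph{Step 1 (Cauchy in probability).} Fix $\varepsilon,\delta>0$. By hypothesis~\ref{item:2ar} choose $M$ so that $\limsup_{n\to\infty}P(d(X_{mn},X_{\infty n})>\varepsilon/4)<\delta$ for every $m\ge M$. For any $m,m'\ge M$ and any finite $n$,
\[
d(X_{m\infty},X_{m'\infty})\le d(X_{m\infty},X_{mn})+d(X_{mn},X_{\infty n})+d(X_{\infty n},X_{m'n})+d(X_{m'n},X_{m'\infty}),
\]
so by a union bound $P(d(X_{m\infty},X_{m'\infty})>\varepsilon)$ is at most the sum of the four probabilities that the individual summands exceed $\varepsilon/4$. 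The left-hand side does not depend on $n$; taking $\limsup_{n\to\infty}$ and using subadditivity of $\limsup$, hypothesis~\ref{item:1ar} makes the first and fourth terms vanish while~\ref{item:2ar} bounds the middle two by $\delta$ each, giving $P(d(X_{m\infty},X_{m'\infty})>\varepsilon)\le 2\delta$ for all $m,m'\ge M$. Hence $(X_{m\infty})_m$ is Cauchy in $P$-probability.

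\emph{Step 2 (construction of $X_{\infty\infty}$).} By Step 1 pick $m_1<m_2<\cdots$ with $P(d(X_{m_{k+1}\infty},X_{m_k\infty})>2^{-k})<2^{-k}$; by Borel--Cantelli, for $P$-a.e.\ $\omega$ the sequence $(X_{m_k\infty}(\omega))_k$ is Cauchy in $(\Sigma,d)$, hence convergent by completeness. Define $X_{\infty\infty}(\omega)$ to be this limit on the corresponding full-probability event and an arbitrary fixed point of $\Sigma$ elsewhere; since $x\mapsto d(x,F)$ is continuous for every closed $F\subseteq\Sigma$, $X_{\infty\infty}$ is a pointwise a.e.\ limit of measurable maps and therefore Borel measurable, without any separability assumption on $\Sigma$. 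Thus $X_{m_k\infty}\to X_{\infty\infty}$ $P$-a.s., in particular in probability, and combining this with the Cauchy-in-probability property from Step 1 yields $X_{m\infty}\to X_{\infty\infty}$ in $P$-probability. (Equivalently, one may invoke that the space of $\Sigma$-valued random elements, under the Ky Fan metric $\rho(X,Y)=\inf\{\varepsilon>0:P(d(X,Y)>\varepsilon)\le\varepsilon\}$, is complete.)

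\emph{Step 3 (convergence of $X_{\infty n}$ and the deterministic case).} Given $\varepsilon,\delta>0$, use Step 2 and~\ref{item:2ar} to fix $m$ with $P(d(X_{m\infty},X_{\infty\infty})>\varepsilon/3)<\delta$ and $\limsup_n P(d(X_{mn},X_{\infty n})>\varepsilon/3)<\delta$; then from $d(X_{\infty n},X_{\infty\infty})\le d(X_{\infty n},X_{mn})+d(X_{mn},X_{m\infty})+d(X_{m\infty},X_{\infty\infty})$, a union bound, and~\ref{item:1ar} (which controls the middle term as $n\to\infty$ for this fixed $m$) we get $\limsup_n P(d(X_{\infty n},X_{\infty\infty})>\varepsilon)\le 2\delta$, hence $X_{\infty n}\to X_{\infty\infty}$ in $P$-probability. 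Finally, if every $X_{m\infty}$ is a constant $x_m\in\Sigma$, then in Step 1 the number $P(d(x_m,x_{m'})>\varepsilon)$ is $\{0,1\}$-valued and can be made arbitrarily small by the choice of $M$, hence equals $0$; so $(x_m)$ is Cauchy in $\Sigma$ and converges to some $x_\infty$, the a.s.\ uniqueness of probability limits forces $X_{\infty\infty}=x_\infty$ $P$-a.s., which we take as the deterministic representative, and then $d(X_{m\infty},X_{\infty\infty})=d(x_m,x_\infty)\to 0$ in the ordinary sense.
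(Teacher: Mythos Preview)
Your proof is correct. The paper does not actually prove this lemma: it is quoted verbatim from \cite{Chakrabarty:Hazra:Sarkar:2016} (Fact~4.3 there) with no argument supplied, so there is nothing to compare against. Your self-contained proof---showing $(X_{m\infty})_m$ is Cauchy in probability via the four-term triangle inequality, extracting an almost-sure limit along a fast subsequence by Borel--Cantelli and completeness, and then closing with another triangle inequality for $X_{\infty n}$---is the standard route and is carried out cleanly, including the measurability check for the limit in a possibly non-separable $\Sigma$.
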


\section{Limiting spectral distribution: proof of Theorem \ref{theorem:main}}\label{sec:proof_thm_main}

The proof of Theorem \ref{theorem:main} is split into several parts and we will now briefly sketch them.
\begin{itemize}[leftmargin=*]
    \item[(1)]  {\bf Truncation}. The first part of the proof is a truncation argument on the unbounded weights $(W_i)_{i\in \Ver_N}$. We construct a new sequence $(W_i^m)_{i\in \Ver_N} $ that is obtained by truncating the original weights at a value $m>1$. We construct another scaled adjacency matrix $\A_{N,m}$, with entries $\A_{N,m}(i,j)$ distributed as Bernoulli random variables with parameter $p_{ij}^m$ given by \eqref{connection_proba} with the weights substituted by the truncated ones. We then show (see Lemma \ref{lemma:truncation}) that the empirical measure $\ESD(\bA_N)$ is well approximated by $\ESD(\A_{N,m})$, that is, their L\'evy distance  vanishes in probability in the limit $m\to\infty$.
    \item[(2)] {\bf Gaussianisation}. In the second part, we aim to Gaussianise $\A_{N,m}$ using the ideas of \cite{chatterjee2005simple}. We begin with the construction of a centred matrix $\cA_{N,m}$, that is obtained by subtracting out the expectation from each entry of $\A_{N,m}$. We then Gaussianise $\cA_{N,m}$, that is, we pass to another matrix $\bA_{N,g}$ with each entry $\bA_{N,g}(i,j)$ being a normal random variable with mean $0$ and the same variance $p_{ij}^m(1-p_{ij}^m)$ as the corresponding entry of $\cA_{N,m}$. Lastly, we tweak the variances of $\bA_{N,g}$ to obtain a Gaussian random matrix $\tilbA_{N,m,g} $ with entries $\tilbA_{N,m,g} (i,j)$ having mean $0$ and variance equal to $r_{ij}^m$, the ``unbounded version'' of $p_{ij}^m$ (see \eqref{eq:rm}). Thanks to  \eqref{HWinequality}, we can show (Lemma \ref{lemma:centring}, Lemma \ref{lemma:gaussianisation} and Lemma \ref{lemma:gaussian_removing1}) that in this whole process we did not lose too much: the L\'evy distance between the empirical measures $\ESD(\A_{N,m})$ and $\ESD(\tilbA_{N,m,g} )$ is small in probability. We remark here that the order of the errors in Lemmas \ref{lemma:centring} and \ref{lemma:gaussian_removing1} is $N^{-\alpha}$, and these steps fail for $\alpha=0$.
    \item[(3)] {\bf Identification of the limit}. We then proceed to analyse the limit of the measure $\ESD(\tilbA_{N,m,g})$ as $N$ goes to infinity. We use Wick's formula to compute its expected moments  and use a concentration argument to show the existence of a unique limiting measure 
    \[
    \mu_{\sigma,\tau,m} :=\lim_{N\to\infty}\ESD(\tilbA_{N,m,g} )\]
    using Proposition \ref{prop:GaussianESD}. We conclude the proof of Theorem \ref{theorem:main} by letting the truncation $m$ go to infinity: using Lemma \ref{lemma:slutsky} we can show that there is a unique limiting measure $\mu_{\sigma,\tau}$ such that $\mu_{\sigma,\tau}:=\lim_{m\to\infty}\mu_{\sigma,\tau,m}$. In the case $\sigma=1$ calculations become explicit.
\end{itemize}
\begin{remark}[Self-loops]\label{rem:selfloop}
    We can use Proposition \ref{prop:hoffman-wielandt} to show that having self-loops in the model will not affect the limiting spectral distribution. 
    Let $\bA_N$ be the scaled adjacency matrix of the model as defined in \eqref{eq:scaledadjacency}. Now, consider $$D_N=c_N^{-1/2}\diag(1,\ldots,1)$$ to be the $N\times N$ diagonal matrix with all diagonal entries ``1'', scaled by a factor of $\sqrt{c_N}$, and $\bA_{N,SL}=\bA_N+D_N$. If we extend the definition of $p_{ij}$ for the case $i=j$ as $p_{ii}=1$, then $\bA_{N,SL}$ will be the scaled adjacency of the random graph with self-loops. Using \eqref{HWinequality}, we get 
    \[
    d_L^3(\mu_{\bA_N},\mu_{\bA_{N,SL}}) \leq \frac{1}{N}\Tr[(\bA_N - \bA_{N,SL})^2] = \frac{1}{N}\Tr[D_N^2] = \frac{N}{Nc_N} = \bigO(c_N^{-1}).
    \]
\end{remark}
\subsection{Truncation}\label{subsec:trunc}
Now we show that for our analysis the weights can be truncated. More precisely, let $m>1$ be a truncation threshold and define $W_i^m= W_i\one_{W_i\le m}$ for any $i\in\Ver_N$.
For all $N\in\N$, we define a new random graph with vertex set $\Ver_N$ and connection probability as follows: conditional on the weights $(W_i^m)_{i\in \Ver_N}$ we connect $i,j\in \Ver_N$ with probability 
\begin{equation}\label{eq:rm}
p_{ij}^m= r_{ij}^m\wedge 1 \qquad\text{ with }\quad
r_{ij}^m
    = \frac{(W_i^m\vee W_j^m)(W_i^m\wedge W_j^m)^\sigma}{\|i-j\|^\alpha}\qquad i\neq j\in \Ver_N\,.
\end{equation}

Let $\A_{N,m}$ be the corresponding adjacency matrix scaled by $\sqrt{c_N}$ and $\ESD(\A_{N,m})$ be its ESD.

It will be useful later to have the two following easy bounds (following from Lemma \ref{lem:sumtoint}):
\begin{align}\label{eq:r_bounds}
    \sum_{i\neq j\in \Ver_N}r_{ij}^m\leq m^{1+\sigma}Nc_N\, ,
    \qquad\qquad \sum_{i\neq j\in \Ver_N}(r_{ij}^m)^t\leq c\,m^{2+2\sigma}\max\{N^{1-t\alpha},\log N\}\,,
\end{align}
for some constant $c>0$ and $t>1$ a real number. The second bound is not optimal, since for some $t>1$ such that $t\alpha>1$, the upper bound will just be a constant depending on $t$ and $\alpha$. However, for our computations, this bound suffices. 

\begin{lemma}[{Truncation}]\label{lemma:truncation}
   For every $\delta>0$ one has 
   $$
   \limsup_{m\to \infty}\lim_{N\to\infty}\prob\left(d_L(\ESD(\A_{N}), \ESD(\A_{N,m}))> \delta\right)=0\,.
   $$
\end{lemma}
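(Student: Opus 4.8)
The plan is to apply the Hoffman--Wielandt inequality \eqref{HWinequality} to the pair $(\A_N,\A_{N,m})$ and then show that the resulting normalised trace of the squared difference tends to $0$ in $\prob$-probability as $N\to\infty$ followed by $m\to\infty$. First I would observe that $\A_N$ and $\A_{N,m}$ are both built on the same probability space, namely by coupling the edge variables: conditionally on the weights, sample $U_{ij}\sim\mathrm{Unif}[0,1]$ i.i.d.\ and set $\mathbb{A}_{\G_N}(i,j)=\one_{U_{ij}\le p_{ij}}$ and $\mathbb{A}_{N,m}(i,j)=\one_{U_{ij}\le p_{ij}^m}$ (unscaled). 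Since $W_i^m\le W_i$ we have $p_{ij}^m\le p_{ij}$, so the entries differ (before scaling) only on pairs where $U_{ij}\in(p_{ij}^m,p_{ij}]$, and $\mathbb{E}[(\mathbb{A}_{\G_N}(i,j)-\mathbb{A}_{N,m}(i,j))^2\mid W]=p_{ij}-p_{ij}^m$. By \eqref{HWinequality},
\[
d_L\big(\ESD(\A_N),\ESD(\A_{N,m})\big)^3\le \frac{1}{N^d}\Tr\big[(\A_N-\A_{N,m})^2\big]=\frac{1}{c_N N^d}\sum_{i\neq j\in\Ver_N}\big(\mathbb{A}_{\G_N}(i,j)-\mathbb{A}_{N,m}(i,j)\big)^2 .
\]

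Next I would take expectations (over both the edge variables and the weights). Using the coupling, $\mathbb{E}\sum_{i\neq j}(\mathbb{A}_{\G_N}(i,j)-\mathbb{A}_{N,m}(i,j))^2=\sum_{i\neq j}\ep[p_{ij}-p_{ij}^m]$. Since $p_{ij}-p_{ij}^m\le r_{ij}-r_{ij}^m$ (truncating only decreases the numerator, and one checks that the difference of the capped versions is dominated by the difference of the uncapped ratios on the region where this matters; away from it both are $1$), and $r_{ij}-r_{ij}^m=\|i-j\|^{-\alpha}\big(\kappa_\sigma(W_i,W_j)-\kappa_\sigma(W_i^m,W_j^m)\big)$, it suffices to bound $\ep[\kappa_\sigma(W_i,W_j)-\kappa_\sigma(W_i^m,W_j^m)]$, which is a finite quantity $\varepsilon(m)$ independent of $i,j$ and, crucially, $\varepsilon(m)\to0$ as $m\to\infty$ by dominated convergence (this uses $\sigma<\tau-1$ and $\tau>2$, so $\ep[\kappa_\sigma(W_i,W_j)]=\ep[W\vee W']\,\ldots<\infty$ — more precisely $\ep[(W\vee W')(W\wedge W')^\sigma]<\infty$ when $\sigma+1<\tau-1$, i.e.\ $\sigma<\tau-2$; for $\tau-2\le\sigma<\tau-1$ one still has $\ep[\kappa_\sigma(W,W')]<\infty$ since the max contributes the dominant tail — this needs a short case check, but integrability holds throughout the stated range). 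Hence
\[
\mathbb{E}\left[\frac{1}{c_N N^d}\sum_{i\neq j}\big(\mathbb{A}_{\G_N}(i,j)-\mathbb{A}_{N,m}(i,j)\big)^2\right]\le \frac{\varepsilon(m)}{c_N N^d}\sum_{i\neq j\in\Ver_N}\frac{1}{\|i-j\|^\alpha}=\varepsilon(m),
\]
by the very definition \eqref{eq:def_c} of $c_N$ (the sum over $i\neq j$ of $\|i-j\|^{-\alpha}$ equals $N^{d-1}\cdot(N c_N/N^{d-1})$... more simply, $\sum_{i\neq j}\|i-j\|^{-\alpha}=N^{d-1}\cdot N c_N$ in $d=1$ reads $=Nc_N$, and dividing by $c_N N$ gives $1$). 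So the expectation of the right-hand side is at most $\varepsilon(m)$, uniformly in $N$.

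Finally, by Markov's inequality, for any $\delta>0$,
\[
\prob\big(d_L(\ESD(\A_N),\ESD(\A_{N,m}))>\delta\big)\le \prob\Big(\tfrac{1}{c_N N^d}\textstyle\sum_{i\neq j}(\cdots)^2>\delta^3\Big)\le \frac{\varepsilon(m)}{\delta^3},
\]
and letting $N\to\infty$ (the bound is uniform in $N$) and then $m\to\infty$ gives the claim. The main obstacle, and the only place real care is needed, is the integrability/dominated-convergence step $\varepsilon(m)\to0$: one must verify that $\ep[\kappa_\sigma(W,W')]<\infty$ across the whole range $\sigma\in(0,\tau-1)$ — which requires splitting according to whether $\sigma\lessgtr\tau-2$ and using Lemma~\ref{lemma:twotails} to control the tail of $\kappa_\sigma(W,W')=(W\vee W')(W\wedge W')^\sigma$ — and that the capping $r\mapsto r\wedge1$ does not spoil the bound $p_{ij}-p_{ij}^m\le r_{ij}-r_{ij}^m$, which follows from the elementary inequality $(a\wedge1)-(b\wedge1)\le a-b$ for $0\le b\le a$.
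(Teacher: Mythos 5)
Your proposal is correct and follows essentially the same route as the paper: Hoffman--Wielandt reduces the problem to bounding $\frac{1}{Nc_N}\sum_{i\neq j}\ep[p_{ij}-p_{ij}^m]$, the per-pair contribution is controlled by $\|i-j\|^{-\alpha}\,\ep[\kappa_\sigma(W_i,W_j)-\kappa_\sigma(W_i^m,W_j^m)]$, and the definition \eqref{eq:def_c} of $c_N$ cancels the spatial sum --- the paper obtains the explicit rate $m^{2-\tau}$ via Lemma~\ref{lem:Par_trunc}, whereas you invoke dominated convergence, and both suffice here. One remark: the integrability worry you raise is unfounded across the whole range $\sigma\in(0,\tau-1)$, since by symmetry and independence $\ep[\kappa_\sigma(W,W')]=2\,\ep[W W'^{\sigma}\one_{W>W'}]\le 2\,\ep[W]\,\ep[W'^{\sigma}]<\infty$ as soon as $\tau>2$ and $\sigma<\tau-1$, so no case split on $\sigma\lessgtr\tau-2$ is needed.
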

\begin{proof}
By~\eqref{HWinequality} we have that 
\begin{align}
    \E\left[d_L^3\left( \ESD(\A_N), \, \ESD(\A_{N,m})\right)\right]&\le \frac{1}{N c_N}\E\left[\Tr\left( (\A_N-\A_{N,m})^2\right)\right]\nonumber\\
    &=\frac{1}{N c_N}\sum_{i\neq j\in \Ver_N}\E\left[(\A_N(i,j)-\A_{N,m}(i,j))^2\one_{\A_N(i,j)\neq\A_{N,m}(i,j)}\right]\nonumber\\
    &\leq \frac{1}{N c_N}\sum_{i\neq j\in \Ver_N}\prob\left(\A_N(i,j)\neq\A_{N,m}(i,j)\right)\label{eq:here}.
    \end{align}
    For fixed $i,\,j$ we will analyse $\prob\left(\A_N(i,j)\neq\A_{N,m}(i,j)\right)$ as follows. We notice that $\A_N(i,j)\neq\A_{N,m}(i,j)$ can occur only if one between $W_i$ and $W_j$ exceeds $m$. Calling
    \begin{equation}\label{eq:setsAnB}
    A=\{W_i\geq m > W_j\} \;\text{ and }\;
    B=\{W_i\geq W_j\geq m\}
    \end{equation}
     we have, by symmetry of $W_i$ and $W_j$, that $\prob\left(\A_N(i,j)\neq\A_{N,m}(i,j)\right)$ equals
    $$
    2\prob\left(\{\A_N(i,j)\neq\A_{N,m}(i,j)\}\cap A\right)+ 2\prob\left(\{\A_N(i,j)\neq\A_{N,m}(i,j)\}\cap B\right)\,.
    $$
    Notice that on the events $A$ and $B$ the variable $\A_{N,m}(i,j)$ is always $0$. So we can bound
    \begin{align*}
    \prob\left(\{\A_N(i,j)\neq\A_{N,m}(i,j)\}\cap A\right)
        &=\prob\left(\{\A_N(i,j)=1\}\cap A\right)\\
        &\leq \ep\Big[\frac{\kappa_{\sigma}(W_i, W_j)}{\|i-j\|^{\alpha}}\one_{A}\Big]
        \leq \frac{\ep[W_i\one_{W_i\geq m}]\ep[W_j^\sigma]}{\|i-j\|^{\alpha}}
        \leq c\frac{m^{2-\tau}}{\|i-j\|^{\alpha}}
    \end{align*}
    for some constant $c>0$, where we have used Lemma~\ref{lem:Par_trunc} and the fact that $\E[W_j^\sigma]<\infty$.
    Analogously we can bound the second summand by
    $$
    \prob\left(\{\A_N(i,j)\neq\A_{N,m}(i,j)\}\cap B\right)
        \leq \ep\left[\frac{W_i W_j^\sigma}{\|i-j\|^{\alpha}}\one_{B}\right]
        \leq \frac{\ep[W_i \one_{W_i\geq m}]\ep[W_j^\sigma]}{\|i-j\|^{\alpha}}
        \leq c\frac{m^{2-\tau}}{\|i-j\|^{\alpha}}\,.
    $$
    Plugging these estimates back into \eqref{eq:here} we obtain
    \begin{align*}
        \E\left[d_L^3\left( \ESD(\A_N), \, \ESD(\A_{N,m})\right)\right]
        &\leq \frac{4c}{N c_N}\sum_{i\neq j\in \Ver_N}\frac{m^{2-\tau}}{\|i-j\|^{\alpha}}
        =4cm^{2-\tau}\,.
    \end{align*}
    We can then conclude by applying Markov's inequality:
    \begin{align*}
        \limsup_{m\to \infty}\lim_{N\to\infty}\prob\left(d_L\left( \ESD(\A_N), \, \ESD(\A_{N,m})\right)> \delta\right)&\leq  \limsup_{m\to \infty}\lim_{N\to\infty}\frac{\E\left[d_L^3\left( \ESD(\A_N), \, \ESD(\A_{N,m})\right)\right]}{\delta^3}\\
        &=0
    \end{align*}
    since $\tau>2$.
\end{proof}

\subsection{Centring}
Let $1< m\le \infty$ and $\cA_{N,m}$ be the centred and rescaled truncated adjacency matrix, i.e. the matrix defined as \begin{equation}\label{eq:cA_def}
  \cA_{N,m}(i,j)=  \A_{N,m}(i,j)- E^W[ \A_{N,m}(i,j)],\quad i\neq j\in \Ver_N.  
\end{equation}
Note that here $m=\infty$ corresponds to the matrix with non-truncated weights. The following lemma says that the centring does not affect the limiting spectral distribution.
\begin{lemma}[{Centring}]\label{lemma:centring}
For any $m\in (1, \infty]$, under the conditions in Theorem \ref{theorem:main}, we have, for all $\delta>0$,
$$
\lim_{N\to\infty}\prob\left(d_L\big( \ESD(\A_{N,m}), \, \ESD(\cA_{N,m})\big)>\delta\right)= 0\,,
$$
where $\ESD(\cA_{N,m})$ is the empirical spectral distribution of $\cA_{N,m}$. 
\end{lemma}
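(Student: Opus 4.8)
\textbf{Proof plan for Lemma~\ref{lemma:centring}.}

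The plan is to use the Hoffman--Wielandt inequality \eqref{HWinequality} exactly as in the proof of Lemma~\ref{lemma:truncation}, reducing the problem to showing that $\tfrac{1}{Nc_N}\Tr\big[(\A_{N,m}-\cA_{N,m})^2\big]\to 0$ in probability. Since $\A_{N,m}-\cA_{N,m}$ is the deterministic matrix whose $(i,j)$ entry is $\E[\A_{N,m}(i,j)] = c_N^{-1/2} p_{ij}^m$ for $i\ne j$ (and $0$ on the diagonal), this trace equals $\tfrac{1}{Nc_N}\sum_{i\ne j} (c_N^{-1/2}p_{ij}^m)^2 \cdot$ — wait, more precisely $\tfrac{1}{Nc_N}\Tr[(\A_{N,m}-\cA_{N,m})^2] = \tfrac{1}{N c_N^2}\sum_{i\ne j\in\Ver_N} (p_{ij}^m)^2$ (using $c_N$ for the scaling in $\A_{N,m}$). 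Conditionally on the weights this is a fixed number, so it suffices to show its expectation over the weights tends to $0$; then Markov's inequality finishes via \eqref{HWinequality}.

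So the key step is to bound $\E\big[\tfrac{1}{N c_N^2}\sum_{i\ne j}(p_{ij}^m)^2\big]$. Using $p_{ij}^m \le r_{ij}^m = \kappa_\sigma(W_i^m,W_j^m)\|i-j\|^{-\alpha}$ and also $p_{ij}^m\le 1$, I would split according to whether $r_{ij}^m\le 1$ or not; in either case $(p_{ij}^m)^2 \le r_{ij}^m \wedge 1 \le r_{ij}^m$, so in fact $\sum_{i\ne j}(p_{ij}^m)^2 \le \sum_{i\ne j} r_{ij}^m$, whose expectation is at most $\E[\kappa_\sigma(W^m, W'^m)]\sum_{i\ne j}\|i-j\|^{-\alpha} \le C\, N c_N$ for a constant $C$ depending on $m$ (or even $m$-uniformly, since $\E[\kappa_\sigma(W,W')]<\infty$ when $\sigma<\tau-1$). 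Hence $\E\big[\tfrac{1}{N c_N^2}\sum_{i\ne j}(p_{ij}^m)^2\big] \le C c_N^{-1} = \bigO(N^{-(d-\alpha)})$, which vanishes since $\alpha<d$. For the case $m=\infty$ one uses the non-truncated weights directly, and the same computation goes through because $\E[\kappa_\sigma(W,W')]<\infty$ under the hypotheses $\tau>2$, $\sigma<\tau-1$. This is precisely the point flagged in the sketch in Section~\ref{sec:proof_thm_main}: the error is of order $N^{-\alpha}$ (here $N^{-(d-\alpha)}$ after the $c_N^{-1}$ normalisation interacts with the $d$-dimensional counting), and the argument breaks at $\alpha=0$ where $c_N$ no longer grows.

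The main obstacle — though a mild one — is bookkeeping the two scaling factors correctly: $\cA_{N,m}$ and $\A_{N,m}$ are \emph{already} scaled by $\sqrt{c_N}$, so the rank-structure of the difference is a scaled rank-one-ish perturbation only up to the low-rank contribution of the diagonal, and one must be careful that the Hoffman--Wielandt bound produces $c_N^{-2}$ and not $c_N^{-1}$ in the denominator. Once that is pinned down, the estimate $\sum_{i\ne j}(p_{ij}^m)^2\le \sum_{i\ne j}r_{ij}^m$ together with \eqref{eq:r_bounds} (or directly Lemma~\ref{lem:sumtoint}) closes the argument. I would then conclude, exactly as in Lemma~\ref{lemma:truncation}, by Markov's inequality applied to $d_L^3$.
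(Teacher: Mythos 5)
Your proposed argument has a genuine gap, caused by a bookkeeping error that happens to hide a second, substantive problem.

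First, the bookkeeping. Hoffman--Wielandt \eqref{HWinequality} reads $d_L^3\le \tfrac{1}{N}\Tr[(\A-\B)^2]$, with prefactor $\tfrac{1}{N}$, not $\tfrac{1}{Nc_N}$. Since $\A_{N,m}$ and $\cA_{N,m}$ are already scaled by $\sqrt{c_N}$, the single $c_N^{-1}$ that should appear comes out of the trace of the squared entries, not from an extra prefactor. Moreover, $\E$ in \eqref{eq:cA_def} is the full expectation $\prob=\mathbf P\otimes P^W$, so $\A_{N,m}-\cA_{N,m}=\E[\A_{N,m}]$ is the \emph{deterministic} matrix with entries $c_N^{-1/2}\,\ep[p_{ij}^m]$, not $c_N^{-1/2}\,p_{ij}^m$ (which would be random, being a function of the weights). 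Hence the correct quantity to bound is
\[
\frac{1}{N}\Tr\bigl(\E[\A_{N,m}]^2\bigr)=\frac{1}{Nc_N}\sum_{i\neq j}\bigl(\ep[p_{ij}^m]\bigr)^2,
\]
with a single $c_N^{-1}$ and with the \emph{square of the expectation} in the summand; your $\tfrac{1}{Nc_N^2}\sum (p_{ij}^m)^2$ has an extra $c_N^{-1}$ and $(p_{ij}^m)^2$ in place of $\ep[p_{ij}^m]^2$.

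Second, the substantive problem that this hides. Your bound $(p_{ij}^m)^2\le r_{ij}^m$, giving $\ep[(p_{ij}^m)^2]\le C\|i-j\|^{-\alpha}$, is not strong enough: plugging it into the \emph{correct} quantity yields $\tfrac{1}{Nc_N}\sum \ep[(p_{ij}^m)^2]\le \tfrac{C}{Nc_N}\sum \|i-j\|^{-\alpha}=C$, a constant which does not vanish. Your proposal only reaches $\bigO(c_N^{-1})$ because the erroneous extra $c_N^{-1}$ compensates for this loss; the two mistakes cancel in your final rate but both are real. The gain one actually needs is that squaring the expectation gives an extra power of $\|i-j\|^{-\alpha}$: since $\ep[p_{ij}^m]\le \ep[\kappa_\sigma(W_i,W_j)]\|i-j\|^{-\alpha}$ and $\ep[\kappa_\sigma(W_i,W_j)]<\infty$ (using $\tau>2$, $\sigma<\tau-1$), one has $\ep[p_{ij}^m]^2\le C^2\|i-j\|^{-2\alpha}$, and then Lemma~\ref{lem:sumtoint} gives $\tfrac{1}{Nc_N}\sum \|i-j\|^{-2\alpha}\lesssim c_N^{-1}\max\{N^{1-2\alpha},\log N\}\to 0$ for $0<\alpha<d$. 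This is exactly the paper's argument, and it explains the $N^{-\alpha}$ rate quoted in the sketch (valid when $2\alpha<d$), which you noticed did not match your $N^{-(d-\alpha)}$ — that discrepancy was the symptom of the gap.
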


\begin{proof}
By \eqref{HWinequality} we have
    \begin{align}
    \E\left[d_L^3\big( \ESD( \A_{N,m}), \, \ESD(\cA_{N,m})\big)\right]
    &\le \frac{1}{N}\E\left[\Tr( E^W[\A_{N,m}]^2)\right]\nonumber\\
    &= \frac{1}{N c_N} \sum_{i\neq  j\in \Ver_N}  \ep[p_{ij}^m ]^2\nonumber\\
    &\leq \frac{1}{N c_N}\sum_{i\neq j\in \Ver_N} \frac{\ep\left[ (W_i\vee W_j)(W_i\wedge W_j)^{\sigma} \right]^2}{\|i-j\|^{2\alpha}} \nonumber \\
    &\le \frac{c}{Nc_N} \max\{N^{1-2\alpha},\log N\}.\label{eq:ord_d}
    \end{align}  
   Here $c$ is some constant as for $\tau>2$ and $\sigma<\tau -1$ we have 
    \[
    \ep\left[(W_i \vee W_j)(W_i\wedge W_j)^{\sigma}\right] = 2\ep\left[W_iW_j^{\sigma}\one_{W_i>W_j}\right] \leq 2\ep[W_i]\ep[W_j^{\sigma}] <\infty.
    \]  
  In the last inequality we used Lemma~\ref{lem:sumtoint}. The result follows by applying Markov's inequality.
\end{proof}

\subsection{Gaussianisation}\label{subsec:gauss}
Let $\left\{G_{i, j}, 1 \leq i \leq j\right\}$ be a family of i.~i.~d.\,standard Gaussian random variables, independent of the weights and the graph. Define a symmetric $N \times N$ matrix $\A_{N,m,g}$ by
\begin{equation}\label{gaussianized_matrix}
\A_{N,m,g}(i, j)=\begin{cases}
    \frac{\sqrt{p_{i,j}^m(1-p_{i,j}^m)}}{\sqrt{c_N}} G_{i \wedge j, i \vee j} &\text{ for $1 \leq i\neq  j \leq N$} \,\\
    0& \text{ for $i=j$}. 
\end{cases}
\end{equation}
Notice that the entries of $\A_{N,m,g}$ have the same mean and variance of the corresponding entries of $\cA_{N,m}$.
Consider a three-times continuously differentiable function $h: \mathbb{R} \rightarrow \mathbb{R}$ such that
$$
\max _{0 \leq k \leq 3} \sup _{x \in \mathbb{R}}\left|h^{(k)}(x)\right|<\infty
$$
where $h^{(k)}$ denotes the $k$-th derivative. For an $N \times N$ real symmetric matrix $\M_N$ define the \emph{resolvent} of $\M_N$ as
$$
R_{M_N}(z)=\left(\M_N-z \Id_N\right)^{-1},\qquad z \in \mathbb{C}^+,
$$
where $\Id_N$ is the $N\times N$ identity matrix. 
In particular, if $\mu\coloneqq \mu_{\M_N}$ is the $\ESD$ of $\M_N$, the relation between the Stieltjes transform $\St_{{\M_N}}$ of $\mu_{\M_N}$ and resolvent can be expressed as
\begin{equation}\label{Hdef}
H(\M_N):=\St_{{\M_N}}(z) = \tr (R_{M_N}(z)), \, z\in \C^+
\end{equation}
\cite[Section 1.3.2]{Bai-silverstein}.
The next result shows that the real and imaginary parts of the Stieltjes transform of $\mu_{\cA_{N,m}}$ are close to those of $\mu_{\A_{N,m,g}}$. Since one knows that the convergence of the ESD is equivalent to showing the convergence of the corresponding Stieltjes transform, one can shift the problem to the Gaussianised setup and work with the matrix $\A_{N,m,g}$.

\begin{lemma}[Gaussianisation]\label{lemma:gaussianisation} Consider the matrix $\cA_{N,m}$ defined in Subsection~\ref{subsec:trunc} and the matrix $\A_{N,m,g}$ defined in~\eqref{gaussianized_matrix}. For any three-times continuously differentiable function $h: \mathbb{R} \rightarrow \mathbb{R}$ such that
$$
\max _{0 \leq k \leq 3} \sup _{x \in \mathbb{R}}\left|h^{(k)}(x)\right|<\infty
$$ 
we have
$$
\begin{gathered} 
\lim _{N \rightarrow \infty} \Big|\mathbb{E}\left[h\left(\Re H\left(\A_{N,m,g}\right)\right)\right]-\mathbb{E}\left[h\left(\Re H\left(\cA_{N,m}\right)\right)\right]\Big|=0, \\
\lim _{N \rightarrow \infty} \Big|\mathbb{E}\left[h\left(\Im H\left(\A_{N,m,g}\right)\right)\right]-\mathbb{E}\left[h\left(\Im H\left(\cA_{N,m}\right)\right)\right]\Big|=0 \,,
\end{gathered}
$$
where $\Re$ and $\Im$ denote the real and imaginary parts respectively and $h^{(k)}$ denotes the $k$-th derivative of $h$.
\end{lemma}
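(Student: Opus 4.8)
\emph{Overall plan.} I would use a Lindeberg--Chatterjee replacement scheme (following \cite{chatterjee2005simple}), exchanging the Bernoulli-type entries for the Gaussian entries of $\A_{N,m,g}$ one edge at a time; this reduces the estimate to a third-order Taylor remainder, which is controlled by derivatives of the resolvent. One subtlety should be dealt with first: $\cA_{N,m}$ is recentred by the \emph{unconditional} mean $\E[\A_{N,m}(i,j)]$, whereas $\A_{N,m,g}$ is built to carry the \emph{conditional} (given the weights) variance $p_{ij}^m(1-p_{ij}^m)/c_N$; so the swap should start from the conditionally centred matrix $\widehat{\cA}_{N,m}$ defined by $\widehat{\cA}_{N,m}(i,j)=c_N^{-1/2}(\one_{\{i\leftrightarrow j\}}-p_{ij}^m)$ for $i\ne j$ and $0$ on the diagonal. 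Since $\cA_{N,m}-\widehat{\cA}_{N,m}=D$ is the weight-measurable matrix with $D(i,j)=c_N^{-1/2}(p_{ij}^m-\E[p_{ij}^m])$, the Hoffman--Wielandt bound \eqref{HWinequality} gives $\E\!\left[d_L(\ESD(\cA_{N,m}),\ESD(\widehat{\cA}_{N,m}))^3\right]\le \tfrac1{Nc_N}\sum_{i\ne j}\Var(p_{ij}^m)$, and since the truncated weights are bounded by $m$ one has $\Var(p_{ij}^m)=O_m(\|i-j\|^{-2\alpha})$ once $\|i-j\|$ is large enough that $p_{ij}^m=r_{ij}^m$, with at most $O_m(N)$ exceptional pairs on which $\Var(p_{ij}^m)\le 1$; a short computation with Lemma~\ref{lem:sumtoint} then makes this $o(1)$ as $N\to\infty$ (for fixed $m$), using $\alpha<d$. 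As $x\mapsto(x-z)^{-1}$ is bounded and continuous and $h$ is bounded Lipschitz, it follows that $\E[h(\Re H(\cA_{N,m}))]-\E[h(\Re H(\widehat{\cA}_{N,m}))]\to 0$ (and likewise for $\Im H$), so it is enough to prove the two displayed limits with $\widehat{\cA}_{N,m}$ in place of $\cA_{N,m}$.

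\emph{The swap.} I would condition on the weights. Then $\{\widehat{\cA}_{N,m}(i,j)\}_{i<j}$ are independent, the $\{\A_{N,m,g}(i,j)\}_{i<j}$ are independent and independent of them, and corresponding entries share the same conditional mean $0$ and conditional variance $p_{ij}^m(1-p_{ij}^m)/c_N$. Enumerate the $\binom N2$ edges and interpolate: $M^{(0)}=\widehat{\cA}_{N,m}$, $M^{(L)}=\A_{N,m,g}$, with $M^{(k)}$ obtained from $M^{(k-1)}$ by replacing the $k$-th (symmetric) pair of entries; then the difference $\E[h(\Re H(\A_{N,m,g}))]-\E[h(\Re H(\widehat{\cA}_{N,m}))]$ telescopes over $k$, and similarly for $\Im H$. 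For the $k$-th edge $\{i,j\}$ write $M^{(k-1)}=M_k^{\circ}+\xi_k E_k$ and $M^{(k)}=M_k^{\circ}+\eta_k E_k$, with $E_k$ the symmetric matrix unit on that edge, $M_k^{\circ}$ the common part with that entry zeroed, and $\phi(t)=h(\Re H(M_k^{\circ}+tE_k))$. Expanding $\phi$ to second order with third-order remainder and using that $\xi_k,\eta_k$ are conditionally independent of $M_k^{\circ}$ and have matching first two conditional moments gives
\[
\bigl|\E[h(\Re H(M^{(k)}))\mid W]-\E[h(\Re H(M^{(k-1)}))\mid W]\bigr|\ \le\ \tfrac16\,\sup_{t\in\mathbb R}\bigl|\phi'''(t)\bigr|\,\bigl(\E[|\xi_k|^3\mid W]+\E[|\eta_k|^3\mid W]\bigr),
\]
and the same bound for $\Im H$; summing over $k$ controls the full difference.

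\emph{Resolvent bounds and conclusion.} For $z\in\mathbb{C}^{+}$ and any real symmetric $M$ one has $\|R_M(z)\|\le(\Im z)^{-1}$; differentiating via $\tfrac{d}{dt}R_{M+tE_k}=-RE_kR$, together with cyclicity of the normalised trace, $\operatorname{rank}E_k\le 2$ and $\|E_k\|=1$, yields $\bigl|\tfrac{d^\ell}{dt^\ell}H(M+tE_k)\bigr|\le C_\ell\,N^{-1}(\Im z)^{-\ell-1}$ uniformly in $t$ and in the edge. Since by Fa\`a di Bruno every term of $\phi'''$ --- of the shapes $h'(\Re H)\,\Re H'''$, $h''(\Re H)\,\Re H'\,\Re H''$, and $h'''(\Re H)\,(\Re H')^3$ --- carries at least one such derivative, one gets $\sup_t|\phi'''(t)|\le C(h,\Im z)\,N^{-1}$, with $C(h,\Im z)$ depending only on $\max_{0\le k\le 3}\|h^{(k)}\|_\infty$ and $(\Im z)^{-1}$. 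Combined with $\E[|\xi_k|^3\mid W]\le c_N^{-3/2}p_{ij}^m$ and $\E[|\eta_k|^3\mid W]\le C\,c_N^{-3/2}p_{ij}^m$, and then summing over edges and invoking the first bound in \eqref{eq:r_bounds}, namely $\sum_{i\ne j}p_{ij}^m\le\sum_{i\ne j}r_{ij}^m\le m^{1+\sigma}Nc_N$, the total swap error is at most a $W$-independent quantity of order $C(h,\Im z)\,m^{1+\sigma}c_N^{-1/2}$, which tends to $0$ because $c_N\sim c_0 N^{d-\alpha}\to\infty$. Taking expectations over the weights and combining with the reduction above proves both limits.

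\emph{Main obstacle.} The heart of the argument is the uniform $O(N^{-1})$ bound on $\phi'''$: one must check that no term in the Fa\`a di Bruno / resolvent bookkeeping loses the factor $N^{-1}$ coming from the \emph{normalised} trace in $H$. This $N^{-1}$ is exactly what balances the $\asymp N^2$ edges against the per-edge third-moment mass $\asymp c_N^{-3/2}p_{ij}^m$, so that the total error is of order $m^{1+\sigma}c_N^{-1/2}$ and hence negligible in the dense regime $\alpha<d$. The rest --- the Taylor/Lindeberg bookkeeping, the elementary moment bounds, and the Hoffman--Wielandt reduction of the first step --- is routine; one could instead invoke a ready-made Lindeberg principle for linear spectral statistics, but the conditional-independence structure forced by the shared weights makes the hands-on version cleaner to write down.
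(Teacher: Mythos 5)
Your proposal is correct and, at its core, takes the same route as the paper: condition on the weights, then compare the two ensembles via a Lindeberg--Chatterjee replacement and control the error through resolvent-derivative bounds, using that the normalised trace supplies an $N^{-1}$ that balances the $\asymp N^2$ edges and makes the total error $O(m^{1+\sigma}c_N^{-1/2})$. The main differences are in the execution. First, the paper invokes Chatterjee's Theorem 1.1 as a black box, which introduces a truncation parameter $K_N$ and splits the error into a ``$\lambda_2$/tail'' piece and a ``$\lambda_3$/bulk'' piece that then have to be balanced (the paper takes $K_N=O(1)$); your hands-on one-edge-at-a-time swap bypasses the truncation entirely and bounds everything directly by the third conditional moments, which is a genuine (if minor) streamlining since here both $\xi_k$ and $\eta_k$ are uniformly $O(c_N^{-1/2})$-sized and their third moments are immediately controlled. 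Second, you explicitly address the discrepancy between the definition of $\cA_{N,m}$ in \eqref{eq:cA_def} (which recentres by the \emph{unconditional} mean $\E[\A_{N,m}(i,j)]=c_N^{-1/2}\ep[p_{ij}^m]$) and the ensemble one actually needs for the replacement scheme (recentred by the \emph{conditional} mean $c_N^{-1/2}p_{ij}^m$, so as to match the conditional moments of the Gaussian model); the paper silently treats $\cA_{N,m}$ as if it were the conditionally centred matrix when it writes $\cA_{N,m}=\M_N(\Xvec_b)$ in law, and your Hoffman--Wielandt reduction to $\widehat{\cA}_{N,m}$ closes that small gap cleanly, at the cost of the extra observation that Lévy-distance control transfers to control of $H$ because $x\mapsto(x-z)^{-1}$ is bounded Lipschitz (which, as you say, is routine but worth stating). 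So: same idea as the paper, tidier bookkeeping, and one more level of rigour at the centring step.
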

To prove the above lemma, we will need the following result from \cite{chatterjee2005simple}. 
\begin{theorem}[{\citet[Theorem 1.1]{chatterjee2005simple}}]\label{Chatterjee1.1}
Let $\mathbf{X}=\left(X_1, \ldots, X_n\right)$ and $\mathbf{Y}=\left(Y_1, \ldots, Y_n\right)$ be two vectors of independent random variables with finite second moments, taking values in some open interval $I$ and satisfying, for each $i, \mathbb{E} X_i=\mathbb{E} Y_i$ and $\mathbb{E} X_i^2=\mathbb{E} Y_i^2$.
Let $f: I^n \rightarrow \mathbb{R}$ be three-times differentiable in each argument. If we set $U=f(\mathbf{X})$ and $V=f(\mathbf{Y})$, then for any thrice differentiable $h: \mathbb{R} \rightarrow \mathbb{R}$ and any $K>0$,
$$
\begin{aligned}
&|\mathbb{E} h(U)-\mathbb{E} h(V)| \leq C_1(h) \lambda_2(f) \sum_{i=1}^n\left[\mathbb{E}\left[X_i^2 \one_{\left|X_i\right|>K}\right]+\mathbb{E}\left[Y_i^2 \one_{\left|Y_i\right|>K}\right]\right] \\
&+C_2(h) \lambda_3(f) \sum_{i=1}^n\left[\mathbb{E}\left[\left|X_i\right|^3 \one_{\left|X_i\right| \leq K}\right]+\mathbb{E}\left[\left|Y_i\right|^3 \one_{\left|Y_i\right| \leq K}\right]\right]
\end{aligned}
$$

where $C_1(h)=\left\|h^{\prime}\right\|_{\infty}+\left\|h^{\prime \prime}\right\|_{\infty}, C_2(h)=\frac{1}{6}\left\|h^{\prime}\right\|_{\infty}+\frac{1}{2}\left\|h^{\prime \prime}\right\|_{\infty}+\frac{1}{6}\left\|h^{\prime \prime \prime}\right\|_{\infty}$ and

$$
\lambda_s(f) \coloneqq \sup \left\{\left|\partial_i^q f(x)\right|^{\frac{s}{q}}: 1 \leq i \leq n, 1 \leq q \leq s, x \in I^n\right\},
$$

where $\partial_i^q$ denotes $q$-fold differentiation with respect to the $i$-th coordinate.
\end{theorem}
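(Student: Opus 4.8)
The plan is to run the classical Lindeberg replacement (telescoping) argument: swap the coordinates $X_i$ for $Y_i$ one at a time, and control each single swap by a one-variable Taylor expansion of $h\circ f$ in the relevant coordinate, split into a small-value regime where one uses a third-order expansion and a large-value regime where one uses only a second-order expansion.

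First I would set up the hybrid vectors. For $0\le i\le n$ put $Z^{(i)}=(Y_1,\dots,Y_i,X_{i+1},\dots,X_n)$, so $Z^{(0)}=\mathbf X$ and $Z^{(n)}=\mathbf Y$, and write the telescoping identity
\[
\mathbb{E}h(U)-\mathbb{E}h(V)=\sum_{i=1}^n\bigl(\mathbb{E}h(f(Z^{(i-1)}))-\mathbb{E}h(f(Z^{(i)}))\bigr).
\]
The vectors $Z^{(i-1)}$ and $Z^{(i)}$ differ only in the $i$-th slot, carrying $X_i$ in the former and $Y_i$ in the latter. Freezing the other coordinates, let $g_i(t):=h\bigl(f(Y_1,\dots,Y_{i-1},t,X_{i+1},\dots,X_n)\bigr)$, a thrice differentiable function of $t\in I$ whose derivatives, by the chain rule, are $g_i'=h'(f)\,\partial_i f$, $g_i''=h''(f)\,(\partial_i f)^2+h'(f)\,\partial_i^2 f$, and $g_i'''=h'''(f)\,(\partial_i f)^3+3h''(f)\,\partial_i f\,\partial_i^2 f+h'(f)\,\partial_i^3 f$. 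Using the definition of $\lambda_2(f)$ (which bounds $(\partial_i f)^2$ and $|\partial_i^2 f|$) and of $\lambda_3(f)$ (which bounds $|\partial_i f|^3$, $|\partial_i^2 f|^{3/2}$ and $|\partial_i^3 f|$, hence also $|\partial_i f\,\partial_i^2 f|\le\lambda_3^{1/3}\lambda_3^{2/3}=\lambda_3$), one obtains the uniform bounds $\sup_t|g_i''(t)|\le C_1(h)\,\lambda_2(f)$ and $\sup_t|g_i'''(t)|\le 6\,C_2(h)\,\lambda_3(f)$; here the specific forms of $C_1(h)$ and $C_2(h)$ are exactly what makes these come out clean.

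Next I would carry out the replacement for a single $i$. Since the remaining coordinates $(Y_1,\dots,Y_{i-1},X_{i+1},\dots,X_n)$ are independent of both $X_i$ and $Y_i$, and $\mathbb{E}X_i=\mathbb{E}Y_i$, $\mathbb{E}X_i^2=\mathbb{E}Y_i^2$, the quantities $\mathbb{E}[g_i(0)]$, $\mathbb{E}[g_i'(0)X_i]=\mathbb{E}[g_i'(0)]\,\mathbb{E}X_i$, and $\mathbb{E}[\tfrac12 g_i''(0)X_i^2]$ each agree with their $Y_i$-counterparts, so
\[
\mathbb{E}h(f(Z^{(i-1)}))-\mathbb{E}h(f(Z^{(i)}))=\mathbb{E}[\Delta_i^X]-\mathbb{E}[\Delta_i^Y],\qquad \Delta_i^X:=g_i(X_i)-g_i(0)-g_i'(0)X_i-\tfrac12 g_i''(0)X_i^2,
\]
and likewise for $\Delta_i^Y$. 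To bound $\mathbb{E}|\Delta_i^X|$ I split on $\{|X_i|\le K\}$ and $\{|X_i|>K\}$. On the first event, Taylor's theorem with third-order remainder gives $|\Delta_i^X|\le\tfrac16\sup|g_i'''|\,|X_i|^3\le C_2(h)\lambda_3(f)\,|X_i|^3\one_{|X_i|\le K}$. On the second event, combining the second-order remainder bound $|g_i(X_i)-g_i(0)-g_i'(0)X_i|\le\tfrac12\sup|g_i''|\,X_i^2$ with $|\tfrac12 g_i''(0)X_i^2|\le\tfrac12\sup|g_i''|\,X_i^2$ yields $|\Delta_i^X|\le\sup|g_i''|\,X_i^2\one_{|X_i|>K}\le C_1(h)\lambda_2(f)\,X_i^2\one_{|X_i|>K}$. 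The same bounds hold for $\Delta_i^Y$. Summing over $i$ and applying the triangle inequality to the telescoping sum produces exactly the claimed inequality. (If the right-hand side is infinite there is nothing to prove, so I may assume all the truncated moments there are finite, which legitimises the expectations above.)

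The only genuinely delicate points are bookkeeping ones: matching the constants $C_1(h),C_2(h)$ requires the precise chain-rule computation above together with the homogeneity built into the definitions of $\lambda_2,\lambda_3$, and the two-tier Taylor split must be arranged so that the third-moment control is only ever invoked on $\{|X_i|\le K\}$ while the tail second-moment control is only invoked on $\{|X_i|>K\}$ — this is precisely what allows the bound to apply to heavy-tailed summands with no finite third moment. I expect this constant-tracking, rather than any conceptual difficulty, to be the main thing to handle with care.
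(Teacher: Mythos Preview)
Your proof is correct and is precisely the Lindeberg replacement argument of \cite{chatterjee2005simple}; note however that the paper does not give its own proof of this statement --- it is quoted verbatim as \citet[Theorem 1.1]{chatterjee2005simple} and used as a black box in the proof of Lemma~\ref{lemma:gaussianisation}. One minor technical caveat: your Taylor expansion is centred at $0$, which tacitly assumes $0\in I$; in the paper's application $I=\mathbb{R}$ so this is harmless, and in the general case one simply expands around any fixed point of $I$ (the moment-matching still cancels the zeroth, first and second order terms).
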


\begin{proof}[Proof of Lemma \ref{lemma:gaussianisation}]
    We prove this for the real part of the Stieltjes transform. The bounds for the imaginary part remain the same. We fix a complex number $z \in \C^+$, given by $z=\Re(z) + \dot\iota\eta$ with $\eta>0$. 
    
    Let $n=N(N-1)/2$ and $\xvec = (x_{ij})_{1\leq i<j\leq N} \in \Rr^{n}$. Define $\mathrm{R}(\xvec)$ to be the matrix-valued differentiable function given by 
    \[
    \mathrm{R}(\xvec) \coloneqq (\M_N(\xvec)-z\Id_N)^{-1}, 
    \]
    where $\M_N(\cdot)$ is the matrix-valued differentiable function that maps a vector in $\Rr^{n}$ to the space of $N\times N$ Hermitian matrices, given by 
    \[
    \M_N(\xvec)_{ij} = \begin{cases} 
        c_N^{-1/2} x_{ij} &\text{if $i< j$},\\
        c_N^{-1/2} x_{ji} &\text{if $i>j$},\\
        0 & \text{if $i=j$}.
    \end{cases}
    \] 
    Since $\M_N$ is symmetric, it has all real eigenvalues. The function $H(\M_N(\xvec))$ admits partial derivatives of all orders. In particular, we denote for any $\mathbf{u} \in\{(i, j)\}_{1 \leq j < i \leq n}$ the partial derivative as $\partial H / \partial x_{\mathbf{u}}$. 
    For any $\mathbf{u} \in\{(i, j)\}_{1 \leq j < i \leq n}$, using the identity $(\M_N(\xvec)-z\Id)\mathrm R(\xvec)=\Id_N$ we have
    \[
    \frac{\partial \mathrm{R}(\xvec)}{\partial x_{\mathbf{u}}} = -\mathrm{R}(\xvec)(\partial_{\mathbf{u}}\M_N )\mathrm{R}(\xvec).
    \]
By iterative application of derivatives, three identities were derived in \cite{chatterjee2005simple}:
$$\begin{aligned} \frac{\partial H}{\partial x_{\mathbf{u}}} & =-\frac{1}{N} \operatorname{Tr}\left(\frac{\partial \M_N(\xvec)}{\partial x_{\mathbf{u}}} \mathrm R(\xvec)^2\right), \\ 
\frac{\partial^2 H}{\partial x_{\mathbf{u}}^2} & =\frac{2}{N} \operatorname{Tr}\left(\frac{\partial \M_N(\xvec)}{\partial x_{\mathbf{u}}} \mathrm R(\xvec) \frac{\partial \M_N(\xvec)}{\partial x_{\mathbf{u}}} \mathrm R(\xvec)^2\right), \\ 
\frac{\partial^3 H}{\partial x_{\mathbf{u}}^3} & =-\frac{6}{N} \operatorname{Tr}\left(\frac{\partial \M_N(\xvec)}{\partial x_{\mathbf{u}}} \mathrm R(\xvec)\frac{\partial \M_N(\xvec)}{\partial x_{\mathbf{u}}} \mathrm R(\xvec) \frac{\partial \M_N(\xvec)}{\partial x_{\mathbf{u}}} \mathrm R(\xvec)^2\right) .\end{aligned}$$

Note that $\partial_{ij}\M_N(\xvec)$ is a matrix with $c_N^{-1/2}$ at the $(i,j)^{\text{th}}$ and $(j,i)^{\text{th}}$ entry, and 0 everywhere else.  Using the bounds on Hilbert Schmidt norms and following the exact argument regarding the bounds in equations (4), (5) and (6) in \cite{chatterjee2005simple} we get that
\[
\left\| \frac{\partial H}{\partial x_{\mathbf{u}}}\right\|_{\infty}\le \frac{2}{\eta N\sqrt{c_N}},\, \left\| \frac{\partial^2 H}{\partial x_{\mathbf{u}}^2}\right\|_{\infty}\le \frac{4}{\eta^3 Nc_N}, \,\left\|\frac{\partial^3 H}{\partial x_{\mathbf{u}}^3}\right\|_{\infty}\le \frac{12}{\eta^4 N c_N^{3/2}}.
\]

Hence 
$$\lambda_2(H) \le 4\max\left\{\frac{1}{\eta^4}, \frac{1}{\eta^3}\right\} \frac{1}{Nc_N}$$
and
$$\lambda_3(H)\le 12 \max\left\{\frac{1}{\eta^6},\frac{1}{\eta^{9/2}},\frac{1}{\eta^4}\right\}\frac{1}{Nc_N^{3/2}}.$$

    Conditional on the weights $(W_i)_{i\ge 1}$, consider the following sequence of independent random variables. Let $\Xvec_b = (X_{ij}^b)_{1\le i<j\le N}$ be a vector with  $X_{ij}^{b} \sim \Ber(p_{ij}^m) - p_{ij}^m$. Similarly, take another vector $\Xvec_g = (X_{ij}^g)_{1\le i<j\le N}$ with $X_{ij}^g \sim \mathcal{N}\left(0,\, p_{ij}^m(1-p_{ij}^m)\right)$.
    Then, 
    \[
    \cA_{N,m} = \M_N(\Xvec_b) \;\text{ and } \;\bA_{N,g} = \M_N(\Xvec_g)
    \]
    in law. 
    We have that 
    \[
    \big|\mathbb{E}\left[h\left(\Re H_z\left(\A_{N,m,g}\right)\right)-h\left(\Re H_z\left(\cA_{N,m}\right)\right)\right]\big| = \big|\ep\left[ E^W\left[   h\left(\Re H_z\left(\A_{N,m,g}\right)\right)-h\left(\Re H_z\left(\cA_{N,m}\right)\right)    \right]      \right]\big|\,.
    \] 
Conditionally on the weights, the sequences $\Xvec_g$ and $\Xvec_b$ form two vectors of independent random variables, with $E^W[X_{ij}^b] = E^W[X_{ij}^g]$ and $E^W[(X_{ij}^b)^2] = E^W[(X_{ij}^g)^2]$. Then, using Theorem \ref{Chatterjee1.1} on $E^W[ h\left(\Re H_z\left(\A_{N,m,g}\right)\right)-h\left(\Re H_z\left(\cA_{N,m}\right)\right) ]$, we have that 
    \begin{align}
    &\big|\ep\left[E^W[ h\left(\Re H_z\left(\A_{N,m,g}\right)\right)-h\left(\Re H_z\left(\cA_{N,m}\right)\right) ]\right] \big|\nonumber\\
    &\leq C_1(h)\lambda_2(H)\sum_{1\le i<j\le N} \E[(X_{ij}^b)^2\one_{|X_{ij}^b|>K_N}] +  \E[(X_{ij}^g)^2\one_{|X_{ij}^g|>K_N}] \label{eq:Chatterjee<K}\\
    &+ C_2(h)\lambda_3(H)\sum_{1\le i<j\le N} \E[(X_{ij}^b)^3\one_{|X_{ij}^b|\leq K_N}] +  \E[(X_{ij}^g)^3\one_{|X_{ij}^g|\leq K_N}] \label{eq:Chatterjee>K}\,,
    \end{align}
    where $K_N$ is a (possibly) $N-$dependent truncation and where we have used that $\left|\partial_{\mathbf{u}}^p \Re H\right|=\left|\Re \partial_{\mathbf{u}}^\rho H\right| \leq\left|\partial_{\mathbf{u}}^p H\right|$.
Now using the fact that ${r}/{p}>0$ we have $\left|\partial_{\mathbf{u}}^p \Re H\right|^{\frac{r}{p}} \leq\left|\partial_{\mathbf{u}}^p H\right|^{\frac{r}{p}}$, and therefore
$$
\lambda_r(\Re H) \leq \lambda_r(H).
$$

    We begin by evaluating \eqref{eq:Chatterjee<K}. To compute the Bernoulli term, notice that $X_{ij}^b$ are uniformly bounded by 1, so, for any $K_N>1$, we automatically have that
    \begin{align*}
        \sum_{1\le i<j\le N} \E[(X_{ij}^b)^2\one_{|X_{ij}^b|>K_N}] =0\,.
    \end{align*}
    For the Gaussian term, we apply the Cauchy-Schwarz inequality (with respect to $\E$). Using also the trivial bound $p_{ij}^m\leq r_{ij}^m$ and Markov's inequality, we obtain
    \begin{align*}
        \sum_{1\le i<j\le N} &\E[(X_{ij}^g)^2\one_{|X_{ij}^g|>K_N}] \leq \sum_{1\le i<j\le N} \E[(X_{ij}^g)^4]^{1/2}\prob(|X_{ij}^g|>K_N)^{1/2} \\
        &\leq 3\sum_{1\le i<j\le N} \ep[(r_{ij}^m)^2]^{1/2}\,\,\frac{\E[(X_{ij}^g)^2]^{1/2}}{K_N}\leq 3\sum_{1\le i<j\le N} \ep[(r_{ij}^m)^2]^{1/2}\,\,\frac{\ep[r_{ij}^m]^{1/2}}{K_N} \\
        %&\leq 3\sum_{i,j=1}^N \frac{\ep[(W_i^m\vee W_j^m)^2(W_i^m \wedge W_j^m)^{2\sigma}]^{1/2}}{|i-j|^{\alpha}}\,\frac{\E[(x_{ij}^g)^2]^{1/2}}{K_N} \\
        &\stackrel{\eqref{eq:r_bounds}}{=} \bigO_N(N\cdot K_N^{-1}\max\{N^{1-3\alpha/2},\,\log{N}\}).
    \end{align*}
    We thus conclude that \eqref{eq:Chatterjee<K} is of order $$\eqref{eq:Chatterjee<K} = \bigO_N(c_N^{-1}K_N^{-1}\max\{N^{1-3\alpha/2},\,\log{N}\}).$$   
    For \eqref{eq:Chatterjee>K}, we use that for any random variable $X$ we have the bound $\E[|X|^3\one_{|X|\leq K}] \leq K\E[X^2]$. Hence we can bound 
    \begin{align*}
        \sum_{1\le i<j\le N} \E[(X_{ij}^b)^3\one_{|X_{ij}^b|\leq K_N} + (X_{ij}^g)^3\one_{|X_{ij}^g|\leq K_N} ] &\leq K_N\sum_{1\le i<j\le N} \E[(X_{ij}^b)^2 + (X_{ij}^g)^2] \\
        &\leq 2K_N\sum_{1\le i<j\le N} \ep[r_{ij}^m] 
        \stackrel{\eqref{eq:r_bounds}}{=} \bigO_N(K_NNc_N)\,. 
    \end{align*}
    This yields that \eqref{eq:Chatterjee>K} is of order $\bigO_N(K_Nc_N^{-1/2}).$ Choosing  $K_N=\O{1}$ gives us that 
    \begin{equation}\label{eq:GaussianisationL1convergence}
        \left|\E\left[h\left(\Re H\left(\A_{N,m,g}\right)\right)\right]-\E\left[h\left(\Re H\left(\cA_{N,m}\right)\right)\right]\right| 
        = o_N(1)\,.
    \end{equation}
    A similar argument holds for the imaginary part $\Im(H)$ and this completes the proof. 
    \end{proof}

\subsubsection{Simplification of the variance structure.}
To conclude Gaussianisation, we would like to construct a final matrix $\tilbA_{N,m,g} $ with a simpler variance structure than that of $\A_{N,m,g}$. We let its entries be 
\begin{equation}\label{eq:Gaussianmatrix}
\tilbA_{N,m,g} (i, j)=\frac{\sqrt{r_{ij}^m}}{\sqrt{c_N}} G_{i \wedge j, i \vee j} \quad 1 \leq i, j \leq N
\end{equation}
where $r_{ij}^m$ is as in~\eqref{eq:rm} and the $\{G_{i,j}:i\ge j\}$ are the i.~i.~d. collection of Gaussian variables used in \eqref{gaussianized_matrix}. We need to prove that the ESD of this matrix gives asymptotically a good approximation of the ESD of $\A_{N,m,g}$.
\begin{lemma}[Simplification of variance]\label{lemma:gaussian_removing1}
    For any $\delta>0$ 
    $$ 
    \lim_{N\to\infty}\prob\left(d_L(\ESD(\A_{N,m,g}), \ESD(\tilbA_{N,m,g} ))> \delta\right)=0\,.
    $$
\end{lemma}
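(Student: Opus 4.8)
The plan is to bound the Lévy–Prokhorov distance between the two ESDs using the Hoffman–Wielandt inequality (Proposition~\ref{prop:hoffman-wielandt}), exactly as in the previous lemmas, and then show that the resulting trace is small \emph{in expectation}. Concretely, both $\A_{N,m,g}$ and $\tilbA_{N,m,g}$ are real symmetric matrices built from the \emph{same} collection of i.i.d.\ standard Gaussians $\{G_{i\wedge j, i\vee j}\}$, so their difference has entries
\[
\big(\A_{N,m,g}-\tilbA_{N,m,g}\big)(i,j) = \frac{G_{i\wedge j,i\vee j}}{\sqrt{c_N}}\Big(\sqrt{p_{ij}^m(1-p_{ij}^m)}-\sqrt{r_{ij}^m}\Big), \qquad i\neq j,
\]
and zero on the diagonal. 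By \eqref{HWinequality},
\[
\E\big[d_L^3(\ESD(\A_{N,m,g}),\ESD(\tilbA_{N,m,g}))\big]
\le \frac{1}{N}\E\Big[\Tr\big((\A_{N,m,g}-\tilbA_{N,m,g})^2\big)\Big]
= \frac{1}{Nc_N}\sum_{i\neq j\in\Ver_N}\E\Big[\Big(\sqrt{p_{ij}^m(1-p_{ij}^m)}-\sqrt{r_{ij}^m}\Big)^2\Big],
\]
where I used $\E[G^2]=1$. Once this is shown to vanish as $N\to\infty$, Markov's inequality gives the claim.

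The key step is to estimate the summand $\big(\sqrt{p_{ij}^m(1-p_{ij}^m)}-\sqrt{r_{ij}^m}\big)^2$. I would split according to whether $r_{ij}^m\le 1$ or $r_{ij}^m>1$. On the event $\{r_{ij}^m\le 1\}$ one has $p_{ij}^m=r_{ij}^m$, so the difference is $\sqrt{r_{ij}^m}\big(1-\sqrt{1-r_{ij}^m}\big)\le \sqrt{r_{ij}^m}\cdot r_{ij}^m = (r_{ij}^m)^{3/2}$, using $1-\sqrt{1-x}\le x$ for $x\in[0,1]$; hence the squared difference is at most $(r_{ij}^m)^{3}\le (r_{ij}^m)^{1+\epsilon}$ for the purposes of summation when $\alpha$ is such that the bound in \eqref{eq:r_bounds} is summable, and in general is at most $(r_{ij}^m)^2\cdot r_{ij}^m$. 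On the event $\{r_{ij}^m>1\}$ one has $p_{ij}^m=1$, so $p_{ij}^m(1-p_{ij}^m)=0$ and the squared difference equals $r_{ij}^m$; crucially this event forces $\|i-j\|^\alpha < m^{1+\sigma}$, i.e.\ $\|i-j\|$ is bounded by a constant depending only on $m,\sigma,\alpha$, so for each $i$ there are only $\bigO_m(1)$ such $j$, contributing $\bigO_m(1)\cdot \bigO_m(1)$ per row and thus $\bigO_m(N)$ total — which after division by $Nc_N\sim c_0 N^{2-\alpha}$ gives $\bigO_m(N^{-(2-\alpha)})=o_N(1)$. Here I would simply bound $r_{ij}^m$ on this event by $m^{1+\sigma}$ (since $W_i^m,W_j^m\le m$) and $\|i-j\|\ge 1$.

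For the bulk term $\{r_{ij}^m\le 1\}$, using $(r_{ij}^m)^3\le (r_{ij}^m)^2$ and the second bound in \eqref{eq:r_bounds},
\[
\frac{1}{Nc_N}\sum_{i\neq j\in\Ver_N}\E\big[(r_{ij}^m)^2\big]
\le \frac{c\,m^{2+2\sigma}}{Nc_N}\max\{N^{1-2\alpha},\log N\}
= \bigO_m\!\Big(\frac{\max\{N^{1-2\alpha},\log N\}}{N^{2-\alpha}}\Big) = o_N(1),
\]
since $0<\alpha<d$ (with $d=1$ in the working setup by Remark~\ref{rem:high_d}) makes the exponent $1-2\alpha-(2-\alpha)=-1-\alpha<0$ and also $\log N = o(N^{2-\alpha})$. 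Combining the two regimes yields $\E[d_L^3(\ESD(\A_{N,m,g}),\ESD(\tilbA_{N,m,g}))]=o_N(1)$, and Markov's inequality finishes the proof. The only mild subtlety — not really an obstacle — is being careful that the "bad" event $\{r_{ij}^m>1\}$ really does localise the pair: this is where the truncation at level $m$ is essential, since without it $r_{ij}$ could exceed $1$ at arbitrarily large distances and the argument would break (consistent with the remark in the paper that these steps fail at $\alpha=0$).
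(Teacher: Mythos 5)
Your proof is correct and in fact more direct than the paper's. The paper proceeds in two steps: it first compares $\A_{N,m,g}$ to an intermediate matrix $L_{N,g}$ with entries $\sqrt{p_{ij}^m/c_N}\,G_{i\wedge j,i\vee j}$ (absorbing the $\sqrt{1-p_{ij}^m}$ factor via the bound $p(\sqrt{1-p}-1)^2\le p^2\le (r_{ij}^m)^2$), and then compares $L_{N,g}$ to a further auxiliary matrix $\tilde{L}_{N,g}$ built by \emph{redrawing} the entries on the bad event $\mathcal A_{ij}^c=\{r_{ij}^m>1\}$ with fresh independent Gaussians $X_{ij}$ and invoking $\tilde{L}_{N,g}\stackrel{d}{=}\tilbA_{N,m,g}$ before applying Hoffman--Wielandt again. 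You instead exploit the fact that $\A_{N,m,g}$ and $\tilbA_{N,m,g}$ are built from the \emph{same} Gaussian array $\{G_{i\wedge j,i\vee j}\}$, so a single Hoffman--Wielandt application reduces the whole lemma to the weight-measurable quantity $\big(\sqrt{p_{ij}^m(1-p_{ij}^m)}-\sqrt{r_{ij}^m}\big)^2$, which you split on $\{r_{ij}^m\le 1\}$ versus $\{r_{ij}^m>1\}$. On the good event, $1-\sqrt{1-x}\le x$ gives $(r_{ij}^m)^3\le(r_{ij}^m)^2$ and the bound \eqref{eq:r_bounds} applies; on the bad event you correctly observe that the truncation forces $\|i-j\|^\alpha<m^{1+\sigma}$, so only $\bigO_m(1)$ pairs per row contribute, each bounded by $m^{1+\sigma}$. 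This dispenses with the extra coupling and the distributional-equality argument, and it makes explicit exactly where the truncation is indispensable, which the paper only mentions in passing. One minor arithmetic slip: $\bigO_m(N)/(Nc_N)=\bigO_m(c_N^{-1})=\bigO_m(N^{-(1-\alpha)})$, not $\bigO_m(N^{-(2-\alpha)})$; since $0<\alpha<d=1$ this still vanishes and the conclusion is unaffected.
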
 
\begin{proof}
    Construct a matrix $L_{N,g}$ with entries 
    $$
    L_{N,g}(i,j) = \begin{cases}
        \frac{\sqrt{p_{ij}^m}}{\sqrt{c_N}} G_{i \wedge j, i \vee j} &\quad  1 \leq i\neq j \leq N \\
        0 &\quad 1\le i=j \le N
    \end{cases}
    $$
    where $p_{ij}^m = r_{ij}^m\wedge 1$. By \eqref{HWinequality}, we have that 
    \begin{align*}
    \E[d_L^3(\ESD(\A_{N,m,g}), \ESD(L_{N,g}))] &\leq \frac{1}{N c_N}\sum_{i\neq j\in \Ver_N} \E\left[G_{i,j}^2 p_{i,j}^m\left(\sqrt{1-p_{ij}^m} - 1\right)^2\right]  \\ 
        &\leq \frac{1}{Nc_N}\sum_{i\neq j\in \Ver_N} \ep[p_{ij}^m|(1 - p_{ij}^m) - 1|] \\
        &\leq \frac{1}{Nc_N}\sum_{i\neq j\in\Ver_N}  \ep[(r_{ij}^m)^2]
        %&\leq \frac{1}{Nc_N}\sum_{i\neq j=1}^N \eta[p_{ij}^m|(1 - p_{ij}^m) - 1|] \leq \frac{1}{N}\sum_{i\neq j=1}^N  \frac{\ep[(W_i^m\vee W_j^m)^2(W_i^m\wedge W_j^m)^{2\sigma}]}{|i-j|^{2\alpha}}\\ 
        \stackrel{\eqref{eq:r_bounds}}{=}  o_N(1).
    \end{align*}
    For $i\neq j\in \Ver_N$ define the events $\mathcal A_{ij}=\{r_{ij}^m\leq 1\}$. 
    Construct yet another matrix $\tilde{L}_{N,g}$ as 
    \[
    \tilde{L}_{N,g}(i,j) 
        = {L}_{N,g}(i,j)\one_{\mathcal A_{ij}} + \frac{X_{ij}}{\sqrt{c_N}}\one_{\mathcal A_{ij}^c}
    \]
where, conditional on the weights, $X_{ij}\sim\mathcal N\left(0,r_{ij}^m \right)$ are mutually independent and independent of the $\{G_{i,j}\}_{i> j}$.
    % \[
    % X_{ij}\sim\mathcal N\left(0,\,\frac{(W_i^m \vee W_j^m)(W_i^m \wedge W_j^m)^{\sigma}}{| i-j|^{\alpha}}\right).
    % \]
    It is easy to see that $\tilde{L}_{N,g} = \tilbA_{N,m,g} $ in distribution. So, comparing $L_{N,g}$ with $\tilde{L}_{N,g}$, using \eqref{HWinequality} we get 
    \begin{align*}
        \E&[d_L^3(\ESD(\tilde{L}_{N,g}), \ESD(L_{N,g}))] \leq \frac{1}{N}\sum_{i\neq j\in\Ver_N} \E[(L_{N,g}(i,j) - \tilde{L}_{N,g}(i,j))^2] \\
        &=\frac{1}{N}\sum_{i\neq j\in\Ver_N}^N \E[(L_{N,g}(i,j) - \tilde{L}_{N,g}(i,j))^2\one_{\mathcal A_{ij}^c}]\\
       &= \frac{1}{N}\sum_{i\neq j\in\Ver_N}^N \E\left[\left(\frac{\sqrt{p_{ij}^m}}{\sqrt{c_N}} G_{i \wedge j, i \vee j} - \frac{X_{ij}}{\sqrt{c_N}}\right)^2\one_{\mathcal A_{ij}^c}\right].
        \end{align*}
        Using that the $G_{i,\,j}$ are centred and independent of the weights, and the Cauchy-Schwarz inequality, we can develop the square to obtain a further upper bound of the form
        \begin{align*}
        & \frac{1}{Nc_N}\sum_{i\neq j\in\Ver_N}^N \ep[G_{i\wedge j,i\vee j}^2\one_{\mathcal A_{ij}^c}]+\ep[X_{ij}^2\one_{\mathcal A_{ij}^c}]  \\
        &\leq \frac{1}{Nc_N}\sum_{i\neq j\in\Ver_N}^N \pr(\mathcal A_{ij}^c)+ \ep[X_{ij}^4]^{1/2}\pr(\mathcal A_{ij}^c)^{1/2} \\
        &\leq \frac{1}{Nc_N}\sum_{i\neq j\in\Ver_N}^N \pr(\mathcal A_{ij}^c)+\frac{3\ep[(W_i^m\vee W_j^m)^2(W_i^m\wedge W_j^m)^{2\sigma}]^{1/2}}{\|i-j\|^{\alpha}}\,%\pr\left(W_iW_j^\sigma\geq |i-j|^\alpha\right)^{1/2} 
        \pr(\mathcal A_{ij}^c)^{1/2}\\
        &= o_N(1)
   \end{align*}
    since 
    $$
    \pr(\mathcal A_{ij}^c) \leq \pr\left(W_iW_j^\sigma\geq \|i-j\|^\alpha\right) 
    \leq  \frac{c}{\|i-j\|^{\alpha\left((\tau-1)\wedge\frac{\tau-1}{\sigma}\right)}}.
    $$

    Using the triangle inequality, we get 
    \[
    \E[d_L^3(\ESD(\A_{N,m,g}), \ESD(\tilbA_{N,m,g} ))] = o_N(1)\,.
    \]
    We conclude the proof using Markov's inequality.
\end{proof}

\subsection{Moment method}
\subsubsection{Preliminary results: combinatorial setup}
We will recall here the combinatorics features of partitions we need in the paper, and refer the reader for a detailed exposition to~\citet[Chapter 9]{Nica:Speicher}.

For $k \geq 1$, denote by $\mathcal{P}(2k)$ the set of partitions of $[2k]$, and by $NC(2k)\coloneqq NC([2k])$ the set of non-crossing partitions of $\{1,2, \ldots, 2 k\}$. When we write a partition, we order its blocks in such a way that the first block always contains $1$, and the $(i+1)$th block contains the smallest element not belonging to any of the previous $i$ blocks. 

In what follows, we shall use Wick's formula. Let $(X_1, \ldots, X_n)$ be a real Gaussian vector, then
\begin{equation}\label{eq:Wick}
\E[X_{i_1}\cdots X_{i_k}]= \sum_{\pi\in \mathcal P_2(2k)} \prod_{(r,s)\in \pi} \E[ X_{i_r} X_{i_s}],
\end{equation}
where $\mathcal P_2(2k)$ denotes the pair partitions of $[2k]$.

Any partition $\pi\in \mathcal P(k)$ can be realised as a \emph{permutation} of $[k]$, that is, a bijective mapping $[k]\to [k]$. Let $S_k$ denote the set of permutations on $k$ elements. Let $\gamma=(1, 2, \ldots, k)\in S_k$ be the shift by $1$ modulo $k$. We will be interested in the composition of two permutations $\gamma$ and $\pi$, denoted by $\gamma\pi$, which will be seen below as a partition.

As an example, consider $\pi = \{\{1,2\},\{3,4\}\}$ and $\gamma=(1,2,3,4)$. To compute $\gamma\pi$, we read $\pi$ as $(1,2)(3,4)$, and compute $\gamma\pi = (1,3)(2)(4)$. We finally read $\gamma\pi$ as $\{\{1,3\},\{2\},\{4\}\}$.

  \begin{definition}[Graph associated to a partition,~{\citet[Definition 2.3]{avena}}]\label{pap1-partitiongraph}  For a fixed $k\geq 1$, let $\gamma$ denote the cyclic permutation $(1,2,\ldots,k)$. For a partition $\pi$, we define $G_{\gamma\pi}=(\Ver_{\gamma\pi}, E_{\gamma\pi})$ as a rooted, labelled directed graph associated with any partition $\pi$ of $[k]$, constructed as follows.
    \begin{itemize}
        \item Initially consider the vertex set $\Ver_{\gamma\pi}=[k]$ and perform a closed walk on $[k]$ as $1\to 2\to 3\to \cdots \to k\to 1$ and with each step of the walk, add an edge. 
        \item Evaluate $\gamma\pi$, which will be of the form $\gamma\pi = \{V_1,V_2,\ldots,V_m\}$ for some $m\geq 1$ where $\{V_i\}_{1\leq i\leq m}$ are disjoint blocks. Then, collapse vertices in $\Ver_{\gamma\pi}$ to a single vertex if they belong to the same block in $\gamma\pi$, and collapse the corresponding edges. Thus, $\Ver_{\gamma\pi} = \{V_1,\ldots,V_m\}$. 
        \item Finally root and label the graph as follows. 
        \begin{itemize}
            \item \emph{Root:} we always assume that the first element of the closed walk (in this case `1') is in $V_1$, and we fix the block $V_1$ as the root. 
            \item \emph{Label:} each vertex $V_i$ gets labelled with the elements belonging to the corresponding block in $\gamma\pi$.
        \end{itemize}
        \end{itemize} 
\end{definition}
For the partitions $\pi=\{\{1,\,2\},\,\{3,\,4\}\}$, $\gamma\pi=\{\{1,\,3\},\{2\},\{4\}\}$, Figure~\ref{fig:closedwalk} illustrates this procedure.
\begin{figure}[ht!]
    \centering
    \includegraphics[width=1\linewidth]{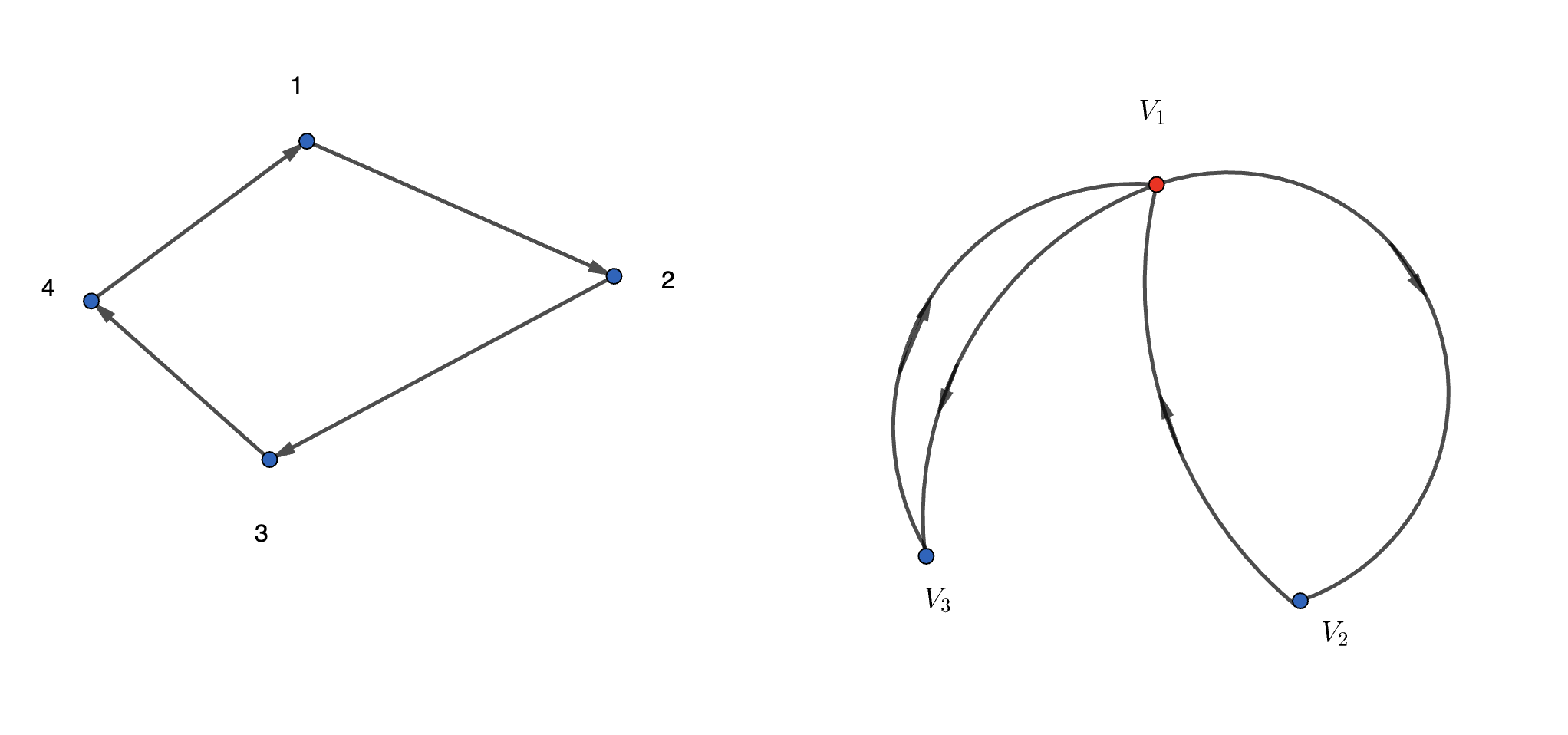}
    \caption{Left: closed walk on $[4]$. Right: graph associated to $\gamma\pi=\{\{1,3\},\{2\},\{4\}\}$. The root is in red.}
    \label{fig:closedwalk}
\end{figure}

The following lemma is an exercise in \citet[Exercise 22.15]{Nica:Speicher} and explains also why non-crossing  pair partitions will have the dominant role in the computations that follow. We will denote as $NC_2(2k)$ the set of non-crossing pair partitions of $[2k]$. For a partition $\pi$ we let $\#\pi$ the number of its blocks.
\begin{lemma} \label{lemma:gammapi}
Given $\pi\in \mathcal P_2(2k)$, one has $\#\gamma\pi\le k+1$ and the equality holds if and only $\pi\in NC_2(2k)$. If $\pi\in NC_2(2k)$, the graph $G_{\gamma\pi}$ is a rooted tree. 
\end{lemma}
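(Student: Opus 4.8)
The plan is to read off both the bound $\#\gamma\pi\le k+1$ and the tree statement from elementary properties of the graph $G_{\gamma\pi}$ of Definition~\ref{pap1-partitiongraph}, and to settle the equality case by an induction on $k$ that removes one ``interval'' pair of the pairing at a time.

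\emph{Step 1: the bound.} By construction $G_{\gamma\pi}$ is the image of the closed walk $1\to2\to\cdots\to2k\to1$ under the quotient that collapses positions lying in a common block of $\gamma\pi$; hence it is connected and its vertex set has exactly $\#\gamma\pi$ elements. Its edge set has at most $k$ elements: if $\{r,s\}$ is a block of $\pi$, then $\gamma\pi(r)=\gamma(\pi(r))=\pi(r)+1=s+1$ and likewise $\gamma\pi(s)=r+1$ (indices mod $2k$), so $r,s+1$ lie in one block of $\gamma\pi$ and $s,r+1$ in another, whence the walk-steps $r\to r+1$ and $s\to s+1$ are identified in $G_{\gamma\pi}$; thus the $2k$ walk-steps produce at most $k$ distinct undirected edges. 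Since a connected graph on $V$ vertices has at least $V-1$ edges, $\#\gamma\pi-1\le k$. (Alternatively, with the word length $|\sigma|=2k-\#\sigma$ on $S_{2k}$ the triangle inequality for the associated metric gives $|\gamma|\le|\pi|+|\gamma\pi|$, and $|\gamma|=2k-1$, $|\pi|=k$ force $|\gamma\pi|\ge k-1$, i.e.\ $\#\gamma\pi\le k+1$.)

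\emph{Step 2: the equality case, by induction on $k$.} For $k=1$ the only pairing $\{\{1,2\}\}$ is non-crossing, $\gamma\pi$ is the identity on $[2]$ with $2=k+1$ blocks, and $G_{\gamma\pi}$ is a single edge, hence a rooted tree. For the inductive step I use two observations. First, a position $j$ is a fixed point of $\gamma\pi$ iff $\{j-1,j\}\in\pi$; call such a block of $\pi$ an \emph{interval block}. Second, if $\{j-1,j\}\in\pi$ and $\pi'$ is the pairing induced on $[2k]\setminus\{j-1,j\}$ (relabelled order-preservingly to $[2k-2]$, with cycle $\gamma'$), then $\#\gamma\pi=\#\gamma'\pi'+1$ and $G_{\gamma\pi}$ is obtained from $G_{\gamma'\pi'}$ by attaching a pendant edge at a new leaf; this is because $\{j\}$ is a fixed point of $\gamma\pi$, and deleting $j$ by splicing it out of its $\gamma\pi$-cycle (which has length $\ge2$) turns $\gamma\pi$ into $\gamma'\pi'$. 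Combining this with the standard facts that every non-crossing pairing of $[2k]$ with $k\ge1$ has an interval block, that re-inserting an interval block preserves non-crossingness, and that a tree on $\ge2$ vertices has a leaf which, for $G_{\gamma\pi}$, must be a singleton $\{j\}$ of $\gamma\pi$ and hence an interval block of $\pi$: if $\pi$ is non-crossing, reduce to a non-crossing $\pi'$ and apply the inductive hypothesis ($\#\gamma'\pi'=k$, $G_{\gamma'\pi'}$ a tree) to get $\#\gamma\pi=k+1$ and $G_{\gamma\pi}$ a tree; conversely, if $\#\gamma\pi=k+1$ then by Step 1 the connected graph $G_{\gamma\pi}$ has $k+1$ vertices and at most $k$ edges, so it has exactly $k$ edges and is a tree, and peeling off a leaf gives an interval block of $\pi$, a pairing $\pi'$ with $G_{\gamma'\pi'}$ a tree on $k$ vertices (so $\#\gamma'\pi'=k$), hence $\pi'$ non-crossing by induction and therefore $\pi$ non-crossing. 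The graph is rooted at the block $V_1$ containing position $1$, as prescribed in Definition~\ref{pap1-partitiongraph}, so ``tree'' upgrades to ``rooted tree''.

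\textbf{Main obstacle.} The technical heart is the bookkeeping in Step 2: checking that the reduced permutation $\gamma'\pi'$ is exactly the restrict-and-splice of $\gamma\pi$ (equivalently, that removing the leaf of $G_{\gamma\pi}$ reproduces $G_{\gamma'\pi'}$), together with the modular indexing and the boundary cases $j\in\{1,2k\}$. This is routine but must be carried out carefully; everything else is either a counting identity or a well-known property of non-crossing partitions.
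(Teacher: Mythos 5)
The paper does not supply its own proof of this lemma --- it delegates it to \citet[Exercise~22.15]{Nica:Speicher} --- so there is no in-paper argument to compare your write-up against. Your approach is the standard one for that exercise, and its overall structure is correct: the genus/Cayley-length inequality (or equivalently the Euler count for the connected graph $G_{\gamma\pi}$) gives $\#\gamma\pi\le k+1$, and peeling interval blocks $\{j-1,j\}$ sets up the induction for the equality case and the tree statement.

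Two small remarks. First, a slip in phrasing: you say ``deleting $j$ by splicing it out of its $\gamma\pi$-cycle (which has length $\ge2$),'' but $\{j\}$ is by hypothesis a \emph{fixed point} of $\gamma\pi$, so its cycle has length $1$; what you actually splice out of a cycle of length $\ge2$ is $j-1$, which satisfies $\gamma\pi(j-1)=j+1$. Second --- and this is the one place that is not purely bookkeeping --- you assert that a leaf of $G_{\gamma\pi}$ ``must be a singleton $\{j\}$ of $\gamma\pi$'' without justification, and this is needed for the converse direction. The argument you want is an edge-count: the $2k$ walk-steps $r\to r+1$ pair up via $\pi$ (step $r\to r+1$ and step $\pi(r)\to\pi(r)+1$ yield the same undirected edge), so the edges of $G_{\gamma\pi}$ are the images of these $k$ pairs; if $\#\gamma\pi=k+1$ then $G_{\gamma\pi}$ has exactly $k$ edges, forcing the map from pairs to edges to be a bijection and hence each edge to be covered by exactly two walk-steps. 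Counting incidences at a block $V$ then gives $2|V|=2\deg(V)$, i.e.\ $\deg(V)=|V|$, so a vertex of degree $1$ is a singleton. With that one sentence added, the proof is complete.
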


Finally, given $\pi\in NC_2(2k)$, we define the map $\mathcal{T}=\Tpi:[2k]\to [k+1]$ as follows. By Lemma \ref{lemma:gammapi}, we know that $\#\gamma\pi=k+1$ and let $\gamma\pi=\{V_1, V_2, \ldots, V_{k+1}\}$. Define
\begin{equation}\label{def:Tpi}
\Tpi(i)= j \quad  \text{ if } \;\, i \in V_{j}.
\end{equation}
% We also remark that for $\pi\in NC_2(2k)$, $\gamma^{-1}\pi$ is the same as the Kreweras complement (see for example the discussion after \citet[Proposition 22.23]{Nica:Speicher}). As an example, consider the non-crossing pairing
% $$
% \pi=\{\{1,2\},\,\{3,6\},\,\{4,5\},\,\{7,8\}\} .
% $$
% Then we have
% $$\gamma^{-1}\pi =\{(1),(2,8,6),(3,5),(4),(7)\}= K(\pi).$$
\subsubsection{Moment characterisation}
We are now ready to give the proofs on Gaussianisation leading to the main result of this subsection, the proof of Theorem~\ref{theorem:main}.

\begin{proposition}\label{prop:GaussianESD}
    Let $\tilbA_{N,m,g} $ be defined as in \eqref{eq:Gaussianmatrix}. Let $\ESD(\tilbA_{N,m,g})$ be its empirical spectral distribution. Then, for $k\in\N$, one has
    \begin{equation}\label{eq:truncatedmoments}
    \lim_{N\to\infty}\E\left[ \int_{\Rr} x^{2k} \ESD(\tilbA_{N,m,g} )(\De x)\right]= M_{2k}
    \end{equation}
    and odd moments are zero. Moreover, 
    \begin{equation}\label{eq:truncatedvar}
    \lim_{N\to\infty}\mathrm{Var}\left(\int_{\Rr} x^{2k} \ESD(\tilbA_{N,m,g})(\De x)\right)= 0,
    \end{equation}
    where
    \begin{equation}\label{eq:limitingmoments}
        M_{2k}= \sum_{\pi \in NC_2(2k)} \ep\left[\prod_{(u,v)\in E(G_{\gamma\pi})}\kappa_{\sigma}(W_u^m, W_v^m)\right]<\infty,
    \end{equation}
    where $\kappa_\sigma$ is as in~\eqref{eq:kappa} and $E(G_{\gamma\pi})$ is the edge set of the tree $G_{\gamma\pi}$. Moreover, there exists a unique compactly supported symmetric and deterministic measure $\mu_{\sigma,\tau, m}$ characterised by the moment sequence $\{M_{2k}\}_{k\in\N}$ such that
    \begin{equation}
       \label{eq:lim_gauss} \lim_{N\to\infty}\ESD(\tilbA_{N,m,g})=\mu_{\sigma,\tau,m}\quad \text{in }\prob\text{-probability}.\end{equation}
\end{proposition}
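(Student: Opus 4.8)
The plan is to prove Proposition~\ref{prop:GaussianESD} via the moment method, exploiting the fact that $\tilbA_{N,m,g}$ is a Gaussian matrix with an explicit variance profile $r_{ij}^m/c_N$. First I would compute
\[
\E\left[\int_{\Rr} x^{\ell} \ESD(\tilbA_{N,m,g})(\De x)\right] = \frac{1}{N}\E\left[\Tr\left(\tilbA_{N,m,g}^{\ell}\right)\right] = \frac{1}{N c_N^{\ell/2}} \sum_{i_1,\ldots,i_{\ell}\in\Ver_N} \E\left[\prod_{t=1}^{\ell} \sqrt{r_{i_t i_{t+1}}^m}\, G_{i_t\wedge i_{t+1}, i_t\vee i_{t+1}}\right]
\]
(indices cyclically, $i_{\ell+1}=i_1$), where the expectation factors as an expectation over the weights times a conditional Gaussian expectation. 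By Wick's formula~\eqref{eq:Wick}, the Gaussian expectation is a sum over pair partitions $\pi\in\mathcal P_2(\ell)$ of products of covariances $\E[G_{i_r\wedge i_{r+1},\cdot} G_{i_s\wedge i_{s+1},\cdot}]$, which forces $\{i_r,i_{r+1}\}=\{i_s,i_{s+1}\}$ as unordered pairs for each block $(r,s)\in\pi$; in particular $\ell$ must be even, $\ell=2k$, so odd moments vanish. This is the standard reduction to a sum over $\pi\in\mathcal P_2(2k)$ and, for each $\pi$, a constrained sum over labellings $i:[2k]\to\Ver_N$ compatible with $\pi$.

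The next step is the genus/tree expansion. For fixed $\pi\in\mathcal P_2(2k)$, the number of free index choices in $\Ver_N$ is $N^{\#\gamma\pi}$ up to lower order (this is exactly the combinatorial content encoded by Definition~\ref{pap1-partitiongraph} and Lemma~\ref{lemma:gammapi}: the independent labels correspond to vertices of the quotient graph $G_{\gamma\pi}$), while the weight-distance factor $\prod_t \sqrt{r_{i_t i_{t+1}}^m}$ only depends on the edges of $G_{\gamma\pi}$, contributing $\prod_{(u,v)\in E(G_{\gamma\pi})} r_{uv}^m$ (the square roots combine in pairs). Since $c_N\sim c_0 N^{d-\alpha}$ and $\sum_{i\neq j}(r^m_{ij})^t$ is controlled by \eqref{eq:r_bounds}, summing over the distances in $G_{\gamma\pi}$ and dividing by $c_N^{k}$ gives, for non-crossing $\pi$ (where $G_{\gamma\pi}$ is a tree with $k$ edges and $k+1$ vertices by Lemma~\ref{lemma:gammapi}), a contribution converging to $\ep[\prod_{(u,v)\in E(G_{\gamma\pi})}\kappa_\sigma(W_u^m,W_v^m)]$ — one factor $c_0 N^{d-\alpha}$ per tree edge is cancelled by summing $1/\|u-v\|^\alpha$ over one endpoint, and the $N$ factors from the $k+1$ vertices against the $N^{-1}\cdot c_N^{-k}$ prefactor leave exactly the right power (using $d=1$ as in Remark~\ref{rem:high_d}). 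For crossing $\pi$, $\#\gamma\pi\le k$ so the power of $N$ is deficient by at least one and the contribution is $o_N(1)$; I should also check that partitions with a singleton block or higher-order blocks are negligible — these are killed by the Gaussian Wick expansion (only pair partitions survive) combined with the $N$-counting. The boundedness $M_{2k}<\infty$ follows since the weights are truncated at $m$, so $\kappa_\sigma(W_u^m,W_v^m)\le m^{1+\sigma}$ and the number of pair partitions is finite; hence $M_{2k}\le |NC_2(2k)|\, m^{(1+\sigma)k} \le 4^k m^{(1+\sigma)k}$, giving a sub-Gaussian-type moment growth that determines a unique compactly supported symmetric measure $\mu_{\sigma,\tau,m}$ (Carleman's condition).

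For the variance bound \eqref{eq:truncatedvar} I would expand
\[
\Var\left(\tfrac{1}{N}\Tr(\tilbA_{N,m,g}^{2k})\right) = \tfrac{1}{N^2}\left(\E[\Tr(\tilbA^{2k}_{N,m,g})^2] - \E[\Tr(\tilbA^{2k}_{N,m,g})]^2\right)
\]
and apply Wick to the $4k$-fold product indexed by two cyclic walks of length $2k$; the terms that survive in the difference are exactly those pair partitions of $[4k]$ that ``connect'' the two walks (contain at least one block pairing an index from the first walk with one from the second). A connectedness count shows such partitions contribute $O(N^{-2}\cdot N^{2k+1}\cdot c_N^{-2k}) = O(1/N) \to 0$, again using \eqref{eq:r_bounds} to absorb the distance sums; the truncation keeps all weight moments finite so no subtlety from heavy tails enters here. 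Finally, \eqref{eq:lim_gauss} follows from \eqref{eq:truncatedmoments}, \eqref{eq:truncatedvar} and the fact that $\{M_{2k}\}$ determines $\mu_{\sigma,\tau,m}$ uniquely: convergence of all moments in $L^1$ plus vanishing variance gives convergence in probability of each moment, hence (by the method of moments for a measure with compact support, equivalently via a subsequence/tightness argument and the Carleman uniqueness) weak convergence in $\prob$-probability in the Lévy metric. The main obstacle I anticipate is the bookkeeping in the genus expansion — carefully matching the power of $N$ produced by the independent index choices (vertices of $G_{\gamma\pi}$) against the $c_N^{-k}N^{-1}$ normalization and the distance-sum gains, and cleanly arguing that all non-(non-crossing-pair) partitions are subleading; this is where the condition $\alpha<d$ is essential, since for each tree edge the sum $\sum_{v}\|u-v\|^{-\alpha}$ must genuinely produce a factor $\sim c_0 N^{d-\alpha}$ rather than something smaller.
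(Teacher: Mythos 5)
Your proposal follows the same route the paper takes---Wick expansion, reduction to non-crossing pair partitions via the tree bound of Lemma~\ref{lemma:gammapi}, Carleman uniqueness, and moment convergence in probability from vanishing variance---so the overall plan is sound. There are, however, one concrete gap and one arithmetical slip.

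The gap concerns how the unordered-pair constraint $\{i_r, i_{r+1}\} = \{i_s, i_{s+1}\}$ produced by Wick is resolved. You move directly from this constraint to ``the number of free index choices is $N^{\#\gamma\pi}$, the independent labels being vertices of $G_{\gamma\pi}$.'' But the constraint can be satisfied in two distinct ways: either $i_r = i_{s+1},\, i_s = i_{r+1}$ (which forces $\mathbf{i}$ to be constant on the blocks of $\gamma\pi$, producing the quotient graph $G_{\gamma\pi}$), or $i_r = i_s,\, i_{r+1} = i_{s+1}$. Only the first option leads to $G_{\gamma\pi}$ and the free-vertex count $N^{\#\gamma\pi}$. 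The second option is equally available even for non-crossing $\pi$, and showing it is subleading is a genuine additional step: one must argue that any such ``wrong'' identification necessarily collapses an extra pair of vertices, so the resulting skeleton has at most $k$ rather than $k+1$ vertices. This is Case b.1 of Claim~\ref{claim:count} in the paper, which needs a careful localisation of where the extra collapse occurs. Your proposal dismisses only crossing partitions (Case b.2, where $\#\gamma\pi \le k$ by Lemma~\ref{lemma:gammapi}); the wrong-matching case is not addressed.

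The arithmetical slip is the variance order: the claimed identity $O(N^{-2}\cdot N^{2k+1}\cdot c_N^{-2k}) = O(1/N)$ is false, since $c_N \sim c_0 N^{d-\alpha}$ (with $d=1$) gives $N^{-2}\cdot N^{2k+1}\cdot c_N^{-2k} \sim N^{2k\alpha - 1}$, which diverges as soon as $\alpha > 1/(2k)$. The factor $N^{2k+1}$ is the naive free-vertex count that ignores the distance sums; summing $\prod_{(r,s)}\|i_r - i_s\|^{-\alpha}$ along the edges of the glued tree produces one factor of $c_N$ (not $N$) per edge. Tracking this correctly, as the paper does, yields an overall $O(c_N^{-1}) = O(N^{\alpha-1})$, which does tend to zero because $\alpha < d$. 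The conclusion is right, but the formula you wrote does not support it and elides exactly the bookkeeping you flagged as the main obstacle.
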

\begin{proof}
    Let $\{G_{i,j}: 1\le i<j\le N \}$ be a sequence of standard independent centred Gaussian random variables as in \eqref{eq:Gaussianmatrix} which is also independent of $(W_i)_{i\in [N]}$. Let $\mathcal G$ be the matrix 
    \begin{align}\label{eq:mathcalG}\mathcal G(i,j)= \begin{cases}
        \|i-j\|^{-\alpha/2}G_{i\wedge j, i\vee j}& i\neq j\\
        0 & i=j
    \end{cases}\end{align}
    
    Observe that
    $$\tilbA_{N,m,g} \overset{d}= \Upsilon_{\sigma,m}\circ \mathcal G,$$
    where $\Upsilon_{\sigma,m}$ is the matrix with elements
    $$\Upsilon_{\sigma, m}(i,j)= \sqrt{ \frac{\kappa_{\sigma}(W_i^m, W_j^m)}{c_N}}$$ 
    and $\circ$ denotes the Hadamard product. 
    %To simplify notation, we introduce the link function over indices $L(i,j)\coloneqq (i\wedge j, i\vee j)$. 
    Using Wick's formula \eqref{eq:Wick} we have
    \begin{align}
    \E&\left[\tr\left(\tilbA_{N,m,g} ^{2k}\right)\right]
    =\frac{1}{Nc_N^k} \mathop{\sum\nolimits^{'}}_{1\le i_1,\ldots, i_{2k}\le N} \E\left[\prod_{\ell=1}^{2k}\Upsilon_{\sigma, m}(i_\ell, i_{\ell+1 })\prod_{\ell=1}^{2k} \mathcal G(i_\ell, i_{\ell+1 })\right]\nonumber\\
    &=\frac{1}{Nc_N^k} \mathop{\sum\nolimits^{'}}_{1\le i_1,\ldots, i_{2k}\le N} \E\left[ \prod_{\ell=1}^{2k} \kappa_{\sigma}^{1/2}(W_{i_\ell}^m, W_{i_{\ell+1}}^m)\right]\sum_{\pi\in \mathcal P_2(2k)} \prod_{(r,s)\in \pi}\E\left[ \mathcal G(i_r, i_{r+1})\mathcal G(i_s, i_{s+1})\right]\nonumber\\
    &=\frac{1}{Nc_N^k} \mathop{\sum\nolimits^{'}}_{1\le i_1,\ldots, i_{2k}\le N} \E\left[ \prod_{\ell=1}^{2k} \kappa_{\sigma}^{1/2}(W_{i_\ell}^m, W_{i_{\ell+1}}^m)\right]\sum_{\pi\in \mathcal P_2(2k)} \prod_{(r,s)\in \pi}\frac{1}{\|i_r- i_{r+1}\|^{\alpha}} \one_{\{i_r,i_{r+1}\}=\{i_s,i_{s+1}\}},\label{eq:break1}
    \end{align}
    where we set $i_{2k+1}=i_1$ to ease notation, and $(r,s)\in\pi$ means $\pi(r)=s$ and  $\pi(s)=r$. Here the $\sum^\prime$ indicates the sum over all the indices $(i_1, \ldots, i_{2k})$ such that $i_\ell\neq i_{\ell+1}$ for $\ell\in [2k]$. The condition $\{i_r,i_{r+1}\}=\{i_s,i_{s+1}\}$ is satisfied in two cases:
    %$L(i_r, i_{r+1})= L(i_s, i_{s+1})$ implies $(i_r, i_{r+1})=(i_s, i_{s+1})$ as a pair. In other words, we have
    \begin{enumerate}[label=\textbf{Case~\arabic*}),ref= Case~\arabic*)]\label{item:matching cases}
        \item\label{item:matching case 1}$i_r= i_{s+1}$ and $i_s= i_{r+1}$, that is, $i_r= i_{\gamma \pi(r)}$ and $i_s= i_{\gamma\pi(s)}$, or 
        \item\label{item:matching case 2} $i_r= i_s$ and $i_{r+1}= i_{s+1}$, that is, $i_r= i_{\pi(r)}$ and $i_{r+1}= i_{\pi(r)+1}$. 
    \end{enumerate}
   As we are going to show, the limit of~\eqref{eq:break1} will be supported on permutations $\pi\in NC_2(2k)$ and such that \ref{item:matching case 1} is true for all $(r,s)\in \pi$. To prove this,  let  us define
    $$\mathrm{Cat}_{\pi, k}=\{ \mathbf{i}=(i_1,\,\ldots,\,i_{2k})\in[N]^{2k}:\, i_r\neq i_{r+1},\, i_r= i_{\gamma\pi(r)} \, \, \forall \, r\in [2k]\}.$$
When the condition $i_r = i_{\gamma\pi(r)}$ holds for all $r$, we see that $\mathbf{i}$ is constant on the blocks of $\gamma\pi$. We construct a graph $G(\mathbf{i})$ associated to $\mathbf{i}\in\mathrm{Cat}_{\pi,k}$ by performing a closed walk $i_1\to i_2 \to \ldots i_{2k}\to i_1$, and then collapsing elements $i_r,i_s$ into the same vertex if $r,s$ belong to the same block in $\gamma\pi$. We then collapse multiple edges. After this, we see that $G(\mathbf{i})=G_{\gamma\pi}$. Thus, when we sum over $\mathbf{i}\in\mathrm{Cat}_{\pi,k}$, the count is over $\#\gamma\pi$ many indices. 

We split the summation in \eqref{eq:break1} into two parts: a first sum over the non-crossing pairings and $\mathbf{i}\in \mathrm{Cat}_{\pi, k}$, and a second part with all the other terms, that we call $\mathcal R_1$. Since we take $\mathbf{i}\in\mathrm{Cat}_{\pi,k}$, $\mathbf i$ is constant on the blocks of $\gamma\pi$. Using this property, we obtain
\begin{align*}
  \E&\left[\tr\left(\tilbA_{N,m,g}^{2k}\right)\right]=\sum_{\pi\in NC_2(2k)} \frac{1}{Nc_N^k}\sum_{\mathbf{i}\in \mathrm{Cat}_{\pi, k}} \E\left[ \prod_{j=1}^{2k} \kappa_{\sigma}^{1/2}(W_{i_j}^m, W_{i_{j+1}}^m)\right]\prod_{(r,s)\in \pi}\frac{1}{\|i_r- i_{r+1}\|^{\alpha}}+ \mathcal R_1\\
 %&=\sum_{\pi\in NC_2(2k)} \frac{1}{Nc_N^k}\sum_{\mathbf{i}\in \mathrm{Cat}_{\pi, k}} \E\left[ \prod_{j=1}^{2k} \kappa_{\sigma}^{1/2}(W_{i_{\Tpi(j)}}^m, W_{i_{\Tpi(j+1)}}^m)\right]\prod_{(r,s)\in \pi}\frac{1}{\|i_r- i_{r+1}\|^{\alpha}}+\mathcal R_1\\
  &=\sum_{\pi\in NC_2(2k)} \frac{1}{Nc_N^k}\sum_{\mathbf{i}\in \mathrm{Cat}_{\pi, k}} \E\left[ \prod_{(u,v)\in E(G_{\gamma\pi})} \kappa_{\sigma}(W_{u}^m, W_v^m)\right]\prod_{(r,s)\in \pi}\frac{1}{\|i_r- i_{r+1}\|^{\alpha}}+\mathcal R_1\\
\end{align*}
where in the last line we have used that %due to the closed walk $i_1\to i_2\to \cdots \to i_{2k}$, every edge of $G_{\gamma\pi}$ is traversed twice and also that 
$\mathbf{i}$ is constant on the blocks of $\gamma\pi$. Since the inner expectation no longer depends on $\mathbf{i}$, we get that
\begin{align*}
    \E\left[\tr\left(\tilbA_{N,m,g} ^{2k}\right)\right]=&\sum_{\pi\in NC_2(2k)} \E\left[ \prod_{(u,v)\in E(G_{\gamma\pi})} \kappa_{\sigma}(W_{u}^m, W_v^m)\right] \frac{1}{Nc_N^k}\sum_{\mathbf{i}\in \mathrm{Cat}_{\pi, k}} \prod_{(r,s)\in \pi}\frac{1}{\|i_r- i_{r+1}\|^{\alpha}}\\
&+\mathcal R_1.\end{align*}

Now we make the following two claims which will finish the proof. 
\begin{claim}
The following hold.
\label{claim:count}
\begin{enumerate}[leftmargin=*,label=\alph*),ref=\alph*)]
    \item\label{item:a} For any  $\pi\in NC_2(2k)$, 
\begin{equation*}\label{claim:a2}
    \lim_{N\to\infty}\frac{1}{N c_N^k}\sum_{\mathbf{i}\in \mathrm{Cat}_{\pi, k}} \prod_{(r,s)\in \pi}\frac{1}{\|i_r- i_{r+1}\|^{\alpha}}=1.
\end{equation*}
\item\label{item:b}We have that $\lim_{N\to\infty} \mathcal R_1= 0.$
\end{enumerate}
\end{claim}

With the above claim, whose proof is deferred to page~\pageref{pg:claim}, we have that \eqref{eq:truncatedmoments} holds. Moreover, the odd moments are identically 0, since there are no non-crossing pair partitions for tuples of the form $\{1,2,\ldots,2k+1\}, k\in\mathbb{N}$. We now need to now show that \eqref{eq:truncatedvar} holds. 

We introduce some new notation to prove \eqref{eq:truncatedvar}. Let $\mathbf{j}=(j_1,\ldots,j_{2k})$. Let $P(\mathbf{i})$ denote the expectation 
\[
P(\mathbf{i}) \stackrel{\eqref{eq:mathcalG}}{\coloneqq}\E\left[\prod_{\ell=1}^{2k}{\kappa_\sigma^{1/2}(W_{i_\ell}^m,W_{i_{\ell+1}}^m)}\mathcal G (i_\ell,i_{\ell+1})\right],
\]
and $P(\mathbf{i},\mathbf{j})$ be 
\[
P(\mathbf{i},\mathbf{j}) \coloneqq\E\left[\prod_{\ell=1}^{2k}{\kappa_\sigma^{1/2}(W_{i_\ell}^m,W_{i_{\ell+1}}^m)}\mathcal G (i_\ell,i_{\ell+1})\prod_{p=1}^{2k}\kappa_\sigma^{1/2}(W_{i_p}^m,W_{i_{p+1}}^m)\mathcal G (i_p,i_{p+1})\right]
\]
(with the usual cyclic convention that ${2k+1}$ equals $1$ for subscripts of indices).
We can then see that 
\begin{equation}\label{eq:variancestep1}
    \Var\left(\int_{\mathbb{R}}x^{2k}\ESD(\tilbA_{N,m,g})(\dd{x})\right) = \frac{1}{N^2c_N^{2k}}\sum_{\mathbf{i},\mathbf{j}:[2k]\to [N]} \left[P(\mathbf{i},\mathbf{j}) - P(\mathbf{i})P(\mathbf{j})\right].
\end{equation}
Note that if the terms involving $\mathbf{i}$ and $\mathbf{j}$ are completely different, that is, if the product of the terms $\mathcal G (i_1,i_2)\cdots\mathcal G (i_{2k},i_1)$ is independent of $\mathcal G (j_1,j_2)\cdots\mathcal G (j_{2k},j_1)$, then $P(\mathbf{i},\mathbf{j})=P(\mathbf{i})P(\mathbf{j})$, and \eqref{eq:variancestep1} becomes identically 0. Hence, we have 
\begin{equation}\label{eq:variancestep1.2}
\Var\left(\int_{\mathbb{R}}x^{2k}\mu_{\tilbA_{N,m,g} }(\dd{x})\right) = \frac{1}{N^2c_N^{2k}}\mathop{\sum\nolimits^{(\geq 1)}}_{\mathbf{i},\mathbf{j}:[2k]\to [N]}P(\mathbf{i},\mathbf{j}),
\end{equation}
where $\sum^{(\geq 1)}$ is over $\mathbf{i},\mathbf{j}$ such that there is \emph{at least} one matching of the form $\tilde{\A}_{N,m,g} (i_r,i_{r+1})=\tilde{\A}_{N,m,g} (j_s,j_{s+1})$ for some $1\leq r,\,s\leq 2k-1$. 
If there is only one entry of $\mathbf i$, say $i_1$, equal to only one entry of $\mathbf j$, say $j_1$, then we still have
\[E^W\left[\prod_{\ell=1}^{2k}\mathcal G (i_\ell,i_{\ell+1})\mathcal G (j_\ell,j_{\ell+1})\right]=0\]
since all entries $\mathcal G(i_\ell,i_{\ell+1})$ are independent (even if $i_1=j_1$) and centred. All the more, $P(\mathbf{i},\mathbf{j})=0$, so let us pass to having two equal indices, that is, a matching.

Let us consider the case when there is \emph{exactly one} matching. Since both indices in $\mathbf{i}$ and $\mathbf{j}$ can be reordered without affecting the variance,without loss of generality we can assume that the matching is $(i_1,i_2)=(j_1,j_2)$, and the rest of the indices of $\mathbf{i}$ are different from the ones in $\mathbf{j}$. One now has $\mathbf{i'}= (i_3,\,\ldots,\,i_{2k})$ and $\mathbf{j'}=(j_3,\,\ldots,\,j_{2k})$ with $2k-2$ indices each, and so we can construct partitions $\pi,\pi'$ for each of them independently. 

For the ease of notation, let 
\[a_{i,j} \coloneqq {\kappa_\sigma^{1/2}(W_{i}^m,\,W_{j}^m)}\mathcal G (i,\,j)\] and let $\sum^{(1)}$ be the sum over $\mathbf{i}, \mathbf{j}$ such that there is exactly one matching between $\mathbf{i}$ and $\mathbf{j}$. Using Wick's formula in the second equality, we have 
\begin{align}
    \frac{1}{N^2c_N^{2k}}&\mathop{\sum\nolimits^{(1)}}_{\mathbf{i},\mathbf{j}:[2k]\to[N]}P(\mathbf{i},\mathbf{j})=\frac{1}{N^2c_N^{2k}}\mathop{\sum\nolimits^{(1)}}_{\mathbf{i},\mathbf{j}:[2k]\to[N]}\ep\left[E^W\left[\prod_{\ell=1}^{2k}a_{i_\ell,\,i_{\ell+1}}a_{j_\ell,\,j_{\ell+1}}\right]\right]\nonumber \\
    &=\frac{1}{N^2c_N^{2k}}\sum_{\mathbf{i},\mathbf{j}:[2k]\to[N]}\ep\left[E^W[a_{i_1,i_2}^2]\sum_{\pi,\pi'\in\mathcal{P}_2(\{3,\,\ldots,\,2k\})}\prod_{(r,s)\in\pi}E^W[a_{i_r,\,i_{r+1}}a_{i_s,\,i_{s+1}}]\times\right.\nonumber\\
    &\left.\times\prod_{(r',s')\in\pi'}E^W[a_{j_{r'},\,j_{r'+1}}a_{j_{s'},\,j_{s'+1}}]\right] .\label{eq:aux_4}
\end{align}
Following the idea of the proof for \eqref{eq:truncatedmoments}, we assume Claim \ref{claim:count} to be true to obtain the optimal order. We will consider $\mathbf{i'},\mathbf{j'} \in \mathrm{Cat}_{\pi,k-1}$, and notice that 
\begin{equation}\label{eq:boundEWa}
E^W[a_{\ell,\ell'}^2]
\leq \frac{m^{1+\sigma}}{\|\ell-\ell'\|^\alpha}.\end{equation}
Interchanging summands, we obtain 
\begin{align}
\eqref{eq:aux_4}
&= \frac{1}{N^2c_N^{2k}}\ep\left[\sum_{\pi,\pi'\in\mathcal{P}_2(\{3,\,\ldots,\,2k\})}\sum_{\substack{\mathbf{i'},\mathbf{j'}\in\mathrm{Cat}_{\pi,k-1},\\i_1\neq i_2\in[N]}}E^W\left[a_{i_1,i_2}^2\right]\prod_{(r,s)\in\pi}E^W\left[a_{i_ri_{\gamma\pi(r)}}^2\right]\right.\times\nonumber\\
&\times\left.\prod_{(r',s')\in\pi'}E^W\left[a_{j_{r'}j_{\gamma\pi(r')}}^2\right]\right] + \mathcal{R}_1'\nonumber \\
&\stackrel{\eqref{eq:boundEWa}}{\leq} \frac{1}{N^2c_N^{2k}}\sum_{\pi,\pi'\in\mathcal{P}_2(\{3,\,\ldots,\,2k\})}\sum_{\substack{\mathbf{i'},\mathbf{j'}\in\mathrm{Cat}_{\pi,k-1},\\i_1\neq i_2\in[N]}} \frac{m^{1+\sigma}}{\|i_1-i_2\|^{\alpha}}\prod_{(r,s)\in\pi}\frac{m^{1+\sigma}}{\|i_r-i_{\gamma\pi(r)}\|^{\alpha}}\times\nonumber\\
&\times\prod_{(r',s')\in\pi'}\frac{m^{1+\sigma
}}{\|j_{r'}-j_{\gamma\pi(r')}\|^{\alpha}} + \mathcal{R}_1'\label{eq:variancestep2},
\end{align}
where $\mathcal{R}_1'$ is an error term such that $\lim_{N\to\infty}\mathcal{R}_1'=0$, which follows from Claim \ref{claim:count}. The contributing terms of the right-hand side of~\eqref{eq:variancestep2} can be upper-bounded by 
\begin{align*}
   \frac{1}{N^2c_N^{2k}}&\sum_{\pi,\pi'\in\mathcal{P}_2(\{3,\,\ldots,\,2k\})}\sum_{\substack{\mathbf{i}:
\mathbf{i'}\in\mathrm{Cat}_{\pi,k-1},\\ i_1\neq i_2}}\frac{m^{1+\sigma}}{\|i_1-i_2\|^{\alpha}}\prod_{(r,s)\in\pi}\frac{m^{1+\sigma}}{\|i_r-i_{\gamma\pi(r)}\|^{\alpha}}\nonumber\\
&\times\sum_{\mathbf{j'}\in\mathrm{Cat}_{\pi',k-1}}\prod_{(r',s')\in\pi'}\frac{m^{1+\sigma
}}{\|j_{r'}-j_{\gamma\pi(r')}\|^{\alpha}}\\
&= \frac{1}{N^2c_N^{2k}}\sum_{\pi,\pi'\in\mathcal{P}_2(\{3,\,\ldots,\,2k\})}\sum_{\substack{\mathbf{i}:
\mathbf{i'}\in\mathrm{Cat}_{\pi,k-1},\\ i_1\neq i_2}}\frac{m^{1+\sigma}}{\|i_1-i_2\|^{\alpha}}\prod_{(r,s)\in\pi}\frac{m^{1+\sigma}}{\|i_r-i_{\gamma\pi(r)}\|^{\alpha}}O_N(N c_k^{k-1}).
\end{align*}
Analogously, the sum over $\mathbf{i}$ conditioned on $\mathbf{i'}\in \mathrm{Cat}_{\pi,k-1}$ will be at most of order $N c_N^k$. Since the sum over partitions is finite and independent of $N$, we obtain 
\begin{equation*}
    \frac{1}{N^2c_N^{2k}}\mathop{\sum\nolimits^{(1)}}_{\mathbf{i},\mathbf{j}:[2k]\to[N]}P(\mathbf{i},\mathbf{j}) = \bigO_N(c_N^{-1}).
\end{equation*}
More generally, if one has $t$ pairings of the form $(i_1,i_2)=(j_1,j_2),\ldots,(i_{t-1},i_{t})=(j_{t-1},j_{t})$, one can use the same argument and instead obtain a faster error of the order of $c_N^{-t+1}$, simply due to the set $(j_{t+1},j_2,\ldots,j_{2k})$ now having only $2k-t$ independent indices from $\mathbf{i}$. Thus, we conclude \begin{equation}\label{eq:varianceorder}
    \Var\left(\int_{\mathbb{R}}x^{2k}\mu_{\tilbA_{N,m,g} }(\dd{x})\right) = \bigO_N(c_N^{-1}).
\end{equation}This proves \eqref{eq:truncatedvar}.

To conclude, one can see that 
\begin{equation}\label{eq:bound_mom_Cat}M_{2k}\leq( m^{1+\sigma})^kC_k,
\end{equation}
where $C_k$ is the $k^{\text{th}}$ Catalan number. 
%We also recall that
%\[
%C_k\sim \frac{4^k}{k^{3/2}\pi}
%\]
%as $k$ tends to infinity~\citep[Sequence A000108]{oeis}. 
Since $\sum_{k\geq1}C_k^{-1/2k}=\infty$, so Carleman's condition implies that $\{M_{2k}\}_{k\geq1}$ uniquely determine the limiting measure. 
Therefore we can find $C,\,R>0$ such that for all $k\geq 1$ we have $M_{2k}\leq C R^{2k}$. In turn, it is a straightforward exercise to show that this implies that $\mu_{\tau,\,\sigma,\,m}$ is compactly supported, and since it has odd moments equal to zero it is symmetric. To conclude the proof of Proposition \ref{prop:GaussianESD} we use for example~\citet[pg. 134]{tao2012topics}.

\end{proof} 

\begin{proof}[Proof of Claim~\ref{claim:count}]\label{pg:claim}
We first show~\ref{item:a}. Fix $\pi\in NC_2(2k)$.
Recall that $\mathbf{i}\in\mathrm{Cat}_{\pi,k}$ is constant on the blocks of $\gamma\pi$. Therefore 
the number of free indices over which we can construct $\mathbf i$ is $\#\gamma\pi= k+1$ (Lemma~\ref{lemma:gammapi}). 

For any $\pi\in NC_2(2k)$, there exists \emph{at least} one block of the form $(r,r+1)\in\pi$, where $1\leq r\leq 2k$, and $2k+1$ is identified with ``1''. Then, $\{r+1\}\in \gamma\pi$ is a singleton, and consequently, $i_{r+1}$ is a free index under $\gamma\pi$, that is, under the summation over indices $i_1,\ldots,i_{2k}$, $i_{r+1}$ runs from 1 to $N$ independent of other indices. Moreover, as $\mathbf{i}\in\mathrm{Cat}_{\pi,k}$, we have $i_r=i_{r+2}$. If we remove the block $(r,r+1)$ from $\pi$, we obtain $\pi'\in NC_2(2k-2)$ as a new partition on $\{1,2,\ldots,r-1,r+2,\ldots 2k\}$. Let $\mathbf{i}'$ be the tuple $(i_1,i_2,\ldots,i_{r-1},i_{r+2},\ldots,i_{2k})$. We then have $\mathbf{i}'\in \mathrm{Cat}_{\pi',k-1}$. So, we can write 
\begin{align}\label{eq:induction step}
\frac{1}{N c_N^k}\sum_{\mathbf{i}\in \mathrm{Cat}_{\pi, k}} \prod_{(r,s)\in \pi}\frac{1}{\|i_{r}- i_{s}\|^{\alpha}}= \frac{1}{N c_N^k}\sum_{\mathbf{i}'\in \mathrm{Cat}_{\pi', k-1}}\left(\prod_{(r,s)\in \pi'}\frac{1}{\|i_{r}- i_{s}\|^{\alpha}}\right)\left(\sum_{i_{r+1}=1}^N \frac{1}{\|i_{r+1}-i_{r+2}\|^{\alpha}}\right).
\end{align}
We now proceed inductively. For $k=1$ the result is given by~\eqref{eq:def_c}. Assume now that we have shown, for some $k-1\geq 0$ and any $\pi'\in NC_2(2(k-1))$, that
\begin{equation}\label{eq:induction hypothesis}
    \lim_{N\to\infty}\frac{1}{Nc_N^{k-1}} \sum_{\mathbf{i}'\in \mathrm{Cat}_{\pi',k-1}} \prod_{(r,s)\in\pi'}\frac{1}{\|i_r-i_s\|^{\alpha}}=1.
\end{equation}
We need to show the same statement holds for $k$, which is precisely Claim \ref{claim:count}\ref{item:a}. Now, we have that 
\begin{align}
    \eqref{eq:induction step} = \frac{1}{Nc_N^{k-1}}\sum_{\mathbf{i}'\in \mathrm{Cat}_{\pi',k-1}} \left(\prod_{(r,s)\in\pi'}\frac{1}{\|i_r-i_s\|^{\alpha}} \right)\left( \frac{1}{c_N}\sum_{i_{r+1}=1}^N \frac{1}{\|i_{r+2}-i_{r+1}\|^{\alpha}}\right). 
\end{align}
Taking the limit $N\to\infty$, we have that the second factor in brackets above by \eqref{eq:def_c}, and then the remaining expression equals 1 by the induction hypothesis \eqref{eq:induction hypothesis}. This proves ~\ref{item:a}.

To show~\ref{item:b}, we now analyse $\mathcal R_1$ explicitly. We have to deal with two cases: 

\begin{enumerate}[label=\textbf{b.\arabic*}),ref=b.\arabic*)]
\item\label{item:b1} $\pi\in \mathcal P_2(2k)$ and $\mathbf{i}\notin \mathrm{Cat}_{\pi, k}$.
\item\label{item:b2} $\pi\in \mathcal P_2(2k)\setminus NC_2(k)$ and $\mathbf{i}\in \mathrm{Cat}_{\pi,k}.$
\end{enumerate}

Note that for both cases the following factor involving the weights will not play any role:

$$\E\left[ \prod_{j=1}^{2k} \kappa_{\sigma}^{1/2}(W_{i_j}^m, W_{i_{j+1}}^m)\right] \le m^{k(1+\sigma)}.$$

We first deal with Case~\ref{item:b2}. From Lemma \ref{lemma:gammapi} we have $\#\gamma\pi\le k$ and hence
\begin{align}
\sum_{\pi\in \mathcal P_2(2k)\setminus NC_2(2k)}\frac{1}{Nc_N^k}&\sum_{\mathbf{i}\in \mathrm{Cat}_{\pi, k}} \E\left[ \prod_{j=1}^{2k} \kappa_{\sigma}^{1/2}(W_{i_j}^m, W_{i_{j+1}}^m)\right]\prod_{(r,s)\in \pi}\frac{1}{\|i_r- i_{r+1}\|^{\alpha}}\nonumber\\
%&\qquad \le m^{k(1+\sigma)}\sum_{\pi\in \mathcal P_2(2k)\setminus NC_2(2k)}\frac{1}{Nc_N^k}\sum_{\mathbf{i}\in \mathrm{Cat}_{\pi, k}}\prod_{(r,s)\in \pi}\frac{1}{\|i_r- i_{r+1}\|^{\alpha}}\nonumber\\
&\le m^{k(1+\sigma)}\sum_{\pi\in \mathcal P_2(2k)\setminus NC_2(2k)}\frac{1}{Nc_N^k}\sum_{i_1\in[N]}\sum_{i_2, \ldots, i_{k}\in[N]}\frac{1}{\|i_2\|^\alpha\ldots \|i_k\|^\alpha}\label{eq:i not in Cat_k},
\end{align}
where $\eqref{eq:i not in Cat_k}$ follows from $\mathbf{i}$ being constant on the cycles of $\gamma\pi$. %, and for $\pi\in\mathcal{P}_2(2k)\setminus NC_2(2K)$, we know $\gamma\pi$ has at most $k$ blocks. 
Thus, we get that the terms involved in Case \ref{item:b2} give a contribution of the order 
\begin{align}
  \eqref{eq:i not in Cat_k}  \leq cm^{k(1+\sigma)} \sum_{\pi\in \mathcal P_2(2k)\setminus NC_2(2k)} \frac{1}{N^{1+k(1-\alpha)}} N^{1+(k-1)(1-\alpha)}=\O{\frac{1}{N^{1-\alpha}}}=o_N(1)\label{eq:contr_b2}\,.
\end{align}

We now show that the contribution from~\ref{item:b1} is also negligible. Begin by fixing a partition $\pi$. For any tuple $\mathbf{i}$, we construct a corresponding graph $G(\mathbf{i})$  (recall that when $\mathbf{i}\in \mathrm{Cat}_{\pi, k}$ we ended up with $G(\mathbf{i})=G_{\gamma\pi}$).
%which in the previous case is the same as $G_{\gamma\pi}$. 
For $\mathbf{i}\not\in\mathrm{Cat}_{\pi,k}$, $G(\mathbf{i})$ is constructed by a closed walk $i_1\to i_2\to \ldots i_{2k}\to i_1$, thereby adding the edges $(i_p, i_{p+1})_{p=1}^{2k}$ with $i_{2k+1}=i_1$. We then collapse indices $i_r,i_s$ into the same vertex when $\{i_r,i_{r+1}\}=\{i_s,i_{s+1}\}$, which can be justified by~\eqref{eq:break1}.
%as in \ref{item:matching case 1} or \ref{item:matching case 2} in the proof of Proposition \ref{prop:GaussianESD}, due to the condition $\{i_r,i_{r+1}\}=\{i_s,i_{s+1}\}$ enforced by~\eqref{eq:break1}. 
We then proceed by collapsing the multiple edges and looking at the skeleton graph $G(\mathbf{i})$, with vertex set $V(\mathbf{i})$. Hence, we see that 
\begin{align}
&\sum_{\pi\in \mathcal P_2(2k)}\frac{1}{Nc_N^k}\mathop{\sum\nolimits^{'}}_{\mathbf{i}:[2k]\to [N]} \E\left[ \prod_{j=1}^{2k} \kappa_{\sigma}^{1/2}(W_{i_j}^m, W_{i_{j+1}}^m)\right]\prod_{(r,s)\in \pi}\frac{1}{\|i_r- i_{r+1}\|^{\alpha}}\nonumber\\
&\le m^{k(1+\sigma)} \sum_{\pi\in \mathcal P_2(2k)}\frac{1}{Nc_N^k} N^{1+(\#V(\mathbf{i})-1)(1-\alpha)}\nonumber\\
&\le \O{ N^{(\#V(\mathbf{i})-k-1)(1-\alpha)}}.\label{eq:contri}
\end{align}   
since $m>1$ is fixed and the sum over the set $\mathcal{P}_2(2k)$ is finite. We see that the only non-trivial contribution comes when $\#V(\mathbf{i})= k+1$, which signifies that $G(\mathbf{i})$ is a tree. Now we claim that for any $\pi\in\mathcal{P}_2(2k)$ and $\mathbf{i}\notin \mathrm{Cat}_{\pi,k}$ we have $\#V(\mathbf{i})<k+1$. 

When $\mathbf{i}\notin \mathrm{Cat}_{\pi, k}$, it implies that there exists at least one $(r,s)\in \pi$, such that $i_{r}= i_s$ and $i_{r+1}= i_{s+1}$. Let us begin by assuming that there exists \emph{exactly one} such pair. Observe that due to the restrictions in $\sum^\prime$, no pair-wise indices are same, hence $s$ can neither be $r+1$, nor $r-1$. Now consider the reduced partition $\pi^{\prime}= \pi\setminus(r,s)$. Observe that $\pi^\prime\in \mathcal{P}_2(2k)(\{1,\ldots, r-1, r+1,\ldots, s-1, s+1,\ldots, 2k\})$. Note that now $\mathbf{i}^\prime\in \mathrm{Cat}_{\pi^\prime,\,k-1}$, so its contribution to~\eqref{eq:variancestep2} is of the order of 
$N^{1+(k-1)(1-\alpha)}$, which comes from the tree $G(\mathbf{i}')$ on $k$ vertices, and where $\mathbf{i}^\prime$ are the $(2k-2)$ indices which are obtained by removal of $(i_r, i_{r+1})$. So, all we are left to show is that due to \ref{item:matching case 2}, $i_r$ and $i_s$ will not give rise to a new vertex in $G(\mathbf{i})$. 

Now, there exists an $r< e< s-1$ such that $(e, s-1)\in \pi$. Due to \ref{item:matching case 2}, we have that  $i_r=i_s$ contribute to the same vertex in $G(\mathbf{i})$. Also $i_e= i_s$ and $i_{e+1}= i_{s-1}$ due to \ref{item:matching case 1}. This implies that $i_r= i_s= i_e$, where $i_e$ is already a contributing index in $G(\mathbf{i}^\prime)$. This implies that $G(\mathbf{i})$ is a tree on at most $k$ vertices, and hence $\#V(\mathbf{i})\le k$. This shows that the contribution in \eqref{eq:contri} goes to 0. 

The case for which there is more than one pair breaking the constraint in $\mathrm{Cat}_{\pi,\,k}$ leads to an even smaller order. When none of the pairs satisfy the constraint then $i_r= i_{\pi(r)}$ for all $r$ and hence $i$ is constant on the blocks of $\pi$. So $\#V(\mathbf{i})\le k$ and again the contribution in \eqref{eq:contri} goes to 0, thus proving the claim. 
%To finish case~\ref{item:b1}, we need to show that, for $\pi\in \mathcal P_2(2k)\setminus NC_2(2k)$ and $\mathbf{i}\notin \mathrm{Cat}_{\pi,\,k}$, the contribution is negligible. It is not difficult to see that in this case, the graph $G(\mathbf{i})$ is no longer a tree and the number of edges is still $k+1$, so $\#V(\mathbf{i})<k+1$ and hence \eqref{eq:contri} goes to 0.
\end{proof}
We wish to highlight that Proposition~\ref{prop:GaussianESD} is in fact more general, and works beyond the kernels $\kappa_\sigma$ defined in~\eqref{eq:kappa}.
\begin{remark}\label{cor:general_kappa}
 The statement of Proposition~\ref{prop:GaussianESD} holds when we replace the entries of $\tilbA_{N,m,g} $ in~\eqref{eq:Gaussianmatrix} by
 \begin{equation*}
\sqrt{\frac{{\kappa(W_i,\,W_j)}}{{c_N}\, \|i-j\|^{\alpha}} }G_{i \wedge j, i \vee j} \quad 1 \leq i, j \leq N
\end{equation*}
for any function $\kappa:[1,\,\infty)^2\to[0,\infty)$ which is symmetric and such that, for all $k\in\N$, 
 \begin{equation}\label{eq:new_cond_kappa}
\E\left[ \prod_{j=1}^{2k} \sqrt{\kappa(X_j, X_{j+1})}\right]<\infty
\end{equation}
where $X_1,\,\ldots,\,X_{2k}$ are i.i.d. random variables in $[1,\infty)$. 
\end{remark}
In our case the kernels $\kappa(x,\,y)\coloneqq\kappa_\sigma(x,\,y)\one_{x,y\leq m}$ satisfy~\eqref{eq:new_cond_kappa}.

\begin{proof}[Proof of Theorem~\ref{theorem:main}] To prove the final result, we shall use Lemma \ref{lemma:slutsky} with the complete metric space $\Sigma= \mathcal P(\Rr)$ and metric $d_L$. Recall also the definition of $\tilbA_{N,m,g}$ resp. $\cA_{N,m}$ of~\eqref{eq:Gaussianmatrix} resp.~\eqref{eq:cA_def}. In Proposition \ref{prop:GaussianESD} we have shown that there exists a (deterministic) measure $\mu_{\sigma,\tau, m}$ such that, for every $m>0$,
$$\lim_{N\to\infty} \ESD(\tilbA_{N,m,g})= \mu_{\sigma,\tau,m} \text{ in $\prob$--probability}.$$
Hence for any $h$ satisfying the assumptions of Lemma \ref{lemma:gaussianisation} and $H$ as in~\eqref{Hdef} it follows that
\[
\lim_{N \rightarrow \infty} \mathbb{E}\left[h\left(\Re H\left(\tilbA_{N,m,g}\right)\right)\right]= h\left(\Re \St_{\mu_{\sigma,\tau,m}}(z)\right).
\]
and thus, by means of Lemma~\ref{lemma:gaussianisation} and Lemma~\ref{lemma:gaussian_removing1},
\[
\lim_{N \rightarrow \infty} \mathbb{E}\left[h\left(\Re H\left(\cA_{N,m}\right)\right)\right]= h\left(\Re \St_{\mu_{\sigma,\tau,m}}(z)\right).
\]
Since the above holds true for any $h$ satisfying the assumptions of Lemma \ref{lemma:gaussianisation} and $\mu_{\sigma,\tau,m}$ is deterministic, it follows that
\[
\lim_{N \rightarrow \infty} \Re H\left(\cA_{N,m}\right)=\Re \St_{\mu_{\sigma,\tau,m}}(z) \text{ in $\prob$--probability}.
\]
A similar argument for the imaginary part shows that
\[
\lim_{N \rightarrow \infty} \Im H\left(\cA_{N,m}\right)= \Im  \St_{\mu_{\sigma,\tau,m}}(z) \text{ in $\prob$--probability}.
\]
Combining the real and imaginary parts, we have, for any $z\in \C^+$,
\[
\lim_{N \rightarrow \infty} \St_{\ESD(\cA_{N,m})}(z) = \St_{\mu_{\sigma,\tau, m}}(z) \;\text{ in $\prob$--probability}.
\]
Since the convergence of the Stieltjes transform characterises weak convergence, we have
$$\lim_{N\to \infty} \ESD(\cA_{N,m})= \mu_{\sigma,\tau, m} \;\text{ in $\prob$--probability}.$$
From Lemma~\ref{lemma:gaussian_removing1} and Lemma~\ref{lemma:centring}, it also follows that, for every $\delta>0$ and $m>0$,
$$\limsup_{N\to \infty} \prob( d_L(\mu_{\A_{N,m}}, \mu_{\sigma,\tau, m})>\delta)=0.$$
This shows condition~\ref{item:1ar} of Lemma \ref{lemma:slutsky}. Condition~\ref{item:2ar} follows from Lemma \ref{lemma:truncation} where we have proved that $$\limsup_{m\to \infty}\lim_{N\to\infty}\prob\left(d_L(\mu_{\A_{N,m}}, \mu_{\A_N})> \delta\right)=0.$$
Thus, it follows from Lemma \ref{lemma:slutsky} that there exists a deterministic measure $\mu_{\sigma,\tau}$ such that 
\begin{equation}\label{eq:important_lim}
  \lim_{m\to \infty}d_L( \mu_{\sigma,\tau, m}, \mu_{\sigma,\tau})= 0,  
\end{equation}
and hence using the triangle inequality the result follows. 

\end{proof}

\section{The case $\sigma=1$: proof of Theorem \ref{theorem:tail}}\label{sec:tail}

\begin{proof}[Proof of Theorem \ref{theorem:tail}]

{\bf Step 1: identification.} We are now dealing with the special case of $\sigma=1$. We go back to the moments of $\mu_{\sigma,\tau, m}$. Let $\gamma\pi= (V_1,\ldots, V_{k+1})$ and let $\ell_i= \#V_i$ (with a slight abuse of notation, we are viewing here $V_i$ as a set rather than a cycle). Since $\sigma=1$, $\kappa_{\sigma}(W_u^m, W_v^m)= W_u^m W_v^m$. It follows that
\begin{align*}
M_{2k}&=\sum_{\pi\in NC_2(2k)} \E\left[ \prod_{(u,v)\in E(G_{\gamma\pi})}W_u^m W_v^m\right]\\
&= \sum_{\pi\in NC_2(2k)}\prod_{i=1}^{k+1}\E[ (W^m_1)^{\ell_i}] \\
&= \int_{\Rr}x^{2k}\mu_{sc}\boxtimes\mu_{W,m}(\De x).
\end{align*}
 
The last equality follows from the combinatorial expression of the moments of the free multiplicative convolution of the semicircle element with an element whose law is given by $\mu_{W,m}$ (see \citet[Theorem 14.4]{Nica:Speicher}). 
Consider the map $x\mapsto x^2$ from $\Rr\to [0,\infty)$ and let $\mu^2$ be the push-forward of a probability measure $\mu$ under this mapping, so that $\mu_{sc}$ is pushed forward to $\mu_{sc}^2$. Then by \citet[Corollary 6.7]{bercovici1993free} it follows that
$$\lim_{m\to\infty}\mu_{W,m}\boxtimes \mu_{sc}^2\boxtimes \mu_{W,m}= \mu_{W}\boxtimes \mu_{sc}^2\boxtimes \mu_{W} .$$

A consequence of \citet[Lemma 8]{arizmendi2009} is that $$\mu_{W,m}\boxtimes \mu_{sc}^2\boxtimes \mu_{W,m}= (\mu_{sc}\boxtimes \mu_{W,m})^2$$
and 
\begin{equation}\label{eq:square_prod}\mu_{W}\boxtimes \mu_{sc}^2\boxtimes \mu_{W}=(\mu_{sc}\boxtimes \mu_{W})^2.
\end{equation}
Thus
$$\lim_{m\to\infty}(\mu_{sc}\boxtimes \mu_{W,m})^2=(\mu_{sc}\boxtimes \mu_{W})^2 .$$
Observe that $\mu_{sc}\boxtimes \mu_{W,m}$ and $\mu_{sc}\boxtimes \mu_{W}$ are symmetric around the origin~\citep[Theorem 7]{arizmendi2009}, hence we have that 
$$\lim_{m\to\infty}d_L(\mu_{sc}\boxtimes \mu_{W}, \mu_{sc}\boxtimes \mu_{W,m})=\lim_{m\to\infty}d_L(\mu_{\sigma,\tau, m}, \mu_{sc}\boxtimes \mu_{W})=0.$$
Theorem \ref{theorem:main} then implies that the $\ESD(\bA_N)$ converges to $\mu_{sc}\boxtimes \mu_{W}$ weakly in probability. 

{\bf Step 2: tail asymptotics.}  In the following we use the recent results of \citet[Lemma 7.2]{Bartosz:Kamil} from which we also borrow the notation. The free probability analogue of the classical Breiman's lemma is as follows: let $\mu, \,\nu$ be probability measures and 
\begin{equation}\label{eq:first_req}
\mu(x,\infty)\sim x^{-\beta} L(x)\end{equation}
with $L(\cdot)$ a slowly varying function~\cite[Definition 1.1]{Bartosz:Kamil}. Assume furthermore that the $\lfloor\beta+1\rfloor$-th moment of $\nu$ exists:
$$m_{\lfloor\beta+1\rfloor}(\nu)<\infty.$$ 
Then 
$$\mu\boxtimes \nu(x,\infty)\sim m_1^{\beta} (\nu)\mu(x, \infty)$$
with $m_1 (\nu)$ the first moment of $\nu$. 

Since $\mu_W\boxtimes \mu_{sc}$ is a symmetric measure we have, using \citet[equation (7.3)]{Bartosz:Kamil} and~\eqref{eq:square_prod},
 \begin{equation}\label{eq:square_meas}     
 \mu_W\boxtimes \mu_{sc}(x,\infty)=\frac{1}{2} (\mu_W\boxtimes \mu_{sc})^2(x^2,\infty)
= \frac{1}{2} \mu_W\boxtimes \mu_{sc}^2\boxtimes \mu_W(x^2,\infty).\end{equation}
By the commutativity and associativity of the free multiplicative convolution~\cite[Remark 14.2]{Nica:Speicher} we have $\mu_W\boxtimes \mu_{sc}^2\boxtimes \mu_W= \mu_{sc}^2\boxtimes \mu_W\boxtimes \mu_W$. 
Let $\nu_W\coloneqq \mu_W\boxtimes \mu_W$. Then a consequence of \citet[Theorem 1.3(iv)]{Bartosz:Kamil} is that
\begin{equation}\label{eq:tail_nuW}
    \nu_W(x,\infty)\sim \left(m_1(\mu_{W})\right)^{\tau-1} \mu_{W}(x,\infty).\end{equation}
Therefore $\nu_W$ satisfies~\eqref{eq:first_req} with $\beta\coloneqq \tau-1$, and clearly $m_{\lfloor \tau \rfloor}(\mu^2_{sc})<\infty$. Thus, applying~\citet[Lemma 7.2]{Bartosz:Kamil},
\begin{align*}
\left(\mu_{sc}\boxtimes\nu_W\right)(x,\infty)&\stackrel{\eqref{eq:square_meas}}{=}\frac12\mu_W\boxtimes \mu_{sc}^2\boxtimes \mu_W(x^2,\infty)\\
&\sim\frac12 \left(m_1(\mu_{sc}^2)\right)^{\tau-1} \nu_W(x^2,\infty)\\
&\stackrel{\eqref{eq:tail_nuW}}{\sim} \frac12\left(m_1(\mu_{sc}^2)\right)^{\tau-1} \left(m_1(\mu_{W})\right)^{\tau-1} \mu_{W}(x^2,\infty)\\
&\sim \frac12\left(m_1(\mu_{sc}^2)\right)^{\tau-1} \left(m_1(\mu_{W})\right)^{\tau-1} x^{-2(\tau-1)}.
\end{align*}
We can conclude noting that $m_1(\mu_{W})$ is finite since $\tau>2$ and $m_1(\mu_{sc}^2)= m_2(\mu_{sc})=1$~\cite[Proposition 5 a)]{arizmendi2009}.
\end{proof}

\section{Non-degeneracy of the limit: proof of Theorem \ref{theorem:secondmoment}}\label{sec:non-deg}

	The proof of Theorem \ref{theorem:secondmoment} follows the arguments in \citet[Theorem 2.2]{Chakrabarty:Hazra:Sarkar:2016}. 
	A key observation is that the limiting measure $\mu_{\sigma,\tau}$ does not depend on the parameter $\alpha$. This will allow us to deal with an easier model, formally corresponding to the case $\alpha=0$, that does not feel the influence of the torus' geometry. The lack of geometry also allows us to work on a unique probability space. More precisely, let $(G_{i,j})_{i,j \ge 1}$ be an i.i.d.~sequence of $ \mathcal{N}(0,1)$ random variables, and let $(W_i)_{i \ge 1}$ be an i.i.d.~sequence of Pareto-distributed random variables with parameter $\tau - 1$. Assume they are defined on the same probability space $(\Omega, \mathcal{F}, \pr)$. Define the $N \times N$ matrix
	$$
	B_{N,m} =N^{-1/2}\sqrt{\kappa_\sigma(W_i^m, W_j^m)} \, G_{i\wedge j,i\vee j}\,.
	$$
	Let $B_{N,\infty}$ denote the matrix with non-truncated weights. The following result can be proven exactly as in Proposition \ref{prop:GaussianESD}.
	\begin{proposition}\label{prop:withoutalpha}
		Let $\ESD(B_{N,m})$ be the empirical spectral distribution of $B_{N,m}$. Then for all $m \ge 1$,
		$$\lim_{N \to \infty} \ESD(B_{N,m}) = \mu_{\sigma,\tau, m} \quad\text{ in $\pr$-probability}.$$
		Moreover,
		$$\lim_{N \to \infty} \ESD(B_{N,\infty}) = \mu_{\sigma,\tau} \quad \;\;\;\text{ in $\pr$-probability}.$$
\end{proposition}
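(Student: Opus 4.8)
The plan is to run the same two-level argument as in the proof of Theorem~\ref{theorem:main}, but in the degenerate geometry ``$\alpha=0$''. What makes $\alpha=0$ harmless here, unlike in Theorem~\ref{theorem:main}, is that $B_{N,m}$ is \emph{already} Gaussian, so the centring and Gaussianisation steps --- the ones whose errors are of order $N^{-\alpha}$ --- are never invoked; only the moment method and a Hoffman--Wielandt truncation are needed, and both behave at $\alpha=0$ exactly as at $\alpha\in(0,d)$.

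\textbf{Step 1 (fixed $m$).} Repeat the moment computation of Proposition~\ref{prop:GaussianESD} verbatim with $c_N$ replaced by $N$ and every factor $\|i-j\|^{-\alpha}$ replaced by $1$ (the diagonal of $B_{N,m}$ being zero, as in~\eqref{eq:Gaussianmatrix}). Wick's formula~\eqref{eq:Wick} gives
\[
\E\!\left[\tr\!\left(B_{N,m}^{2k}\right)\right]=\frac{1}{N^{k+1}}\mathop{\sum\nolimits^{'}}_{1\le i_1,\dots,i_{2k}\le N}\E\!\left[\prod_{\ell=1}^{2k}\kappa_\sigma^{1/2}(W_{i_\ell}^m,W_{i_{\ell+1}}^m)\right]\sum_{\pi\in\mathcal P_2(2k)}\prod_{(r,s)\in\pi}\one_{\{i_r,i_{r+1}\}=\{i_s,i_{s+1}\}},
\]
with $i_{2k+1}=i_1$. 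As in Proposition~\ref{prop:GaussianESD}, the surviving contribution comes from $\pi\in NC_2(2k)$ and $\mathbf i\in\mathrm{Cat}_{\pi,k}$; with the distance weights gone, the analogue of Claim~\ref{claim:count}\ref{item:a} is the trivial statement $N^{-(k+1)}\#\mathrm{Cat}_{\pi,k}\to1$ (there are $\#\gamma\pi=k+1$ free indices, by Lemma~\ref{lemma:gammapi}), the weight factor collapses to $\E[\prod_{(u,v)\in E(G_{\gamma\pi})}\kappa_\sigma(W_u^m,W_v^m)]$, and summing over $NC_2(2k)$ reproduces $M_{2k}$ of~\eqref{eq:limitingmoments}. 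The remaining terms (crossing $\pi$, or $\mathbf i\notin\mathrm{Cat}_{\pi,k}$) are $\bigO(N^{-1})$ by the skeleton-graph bound in the proof of Claim~\ref{claim:count}\ref{item:b} with $1-\alpha$ read as $1$; the variance is $\bigO(N^{-1})$ by the computation leading to~\eqref{eq:varianceorder}; and $M_{2k}\le(m^{1+\sigma})^kC_k$ as in~\eqref{eq:bound_mom_Cat}, so Carleman's condition identifies the limit as the measure $\mu_{\sigma,\tau,m}$ of Proposition~\ref{prop:GaussianESD}, and convergence in $\pr$-probability follows as there.

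\textbf{Step 2 (truncation).} Applying Proposition~\ref{prop:hoffman-wielandt} directly to the two (zero-diagonal) Gaussian matrices,
\[
\E\!\left[d_L^3\!\left(\ESD(B_{N,\infty}),\ESD(B_{N,m})\right)\right]\le\frac1N\E\!\left[\Tr\!\left((B_{N,\infty}-B_{N,m})^2\right)\right]=\frac{1}{N^2}\sum_{i\neq j}\E\!\left[\Big(\sqrt{\kappa_\sigma(W_i,W_j)}-\sqrt{\kappa_\sigma(W_i^m,W_j^m)}\Big)^2\right],
\]
using that the $G$'s have unit variance. Since $(\sqrt a-\sqrt b)^2\le|a-b|$ for $a\ge b\ge0$, and $\kappa_\sigma(W_i,W_j)$ and $\kappa_\sigma(W_i^m,W_j^m)$ differ only when $W_i>m$ or $W_j>m$, each summand is at most $\E[\kappa_\sigma(W_i,W_j)\one_{\{W_i>m\}\cup\{W_j>m\}}]$, which by the symmetrisation used in the proof of Lemma~\ref{lemma:truncation} together with Lemma~\ref{lem:Par_trunc} and $\E[W^\sigma]<\infty$ (valid since $\sigma<\tau-1$) is $\le c\,m^{2-\tau}$ uniformly in $i,j,N$. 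Hence $\E[d_L^3(\ESD(B_{N,\infty}),\ESD(B_{N,m}))]\le c\,m^{2-\tau}$, and Markov's inequality gives $\limsup_{m\to\infty}\limsup_{N\to\infty}\pr(d_L(\ESD(B_{N,\infty}),\ESD(B_{N,m}))>\delta)=0$ because $\tau>2$.

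\textbf{Step 3 (conclusion).} Apply Lemma~\ref{lemma:slutsky} with $\Sigma=\mathcal P(\Rr)$, $d=d_L$, $X_{mN}=\ESD(B_{N,m})$, $X_{m\infty}=\mu_{\sigma,\tau,m}$ and $X_{\infty N}=\ESD(B_{N,\infty})$: Step~1 verifies hypothesis~\ref{item:1ar} and Step~2 verifies hypothesis~\ref{item:2ar}. The lemma yields a deterministic $X_{\infty\infty}$ with $\lim_N\ESD(B_{N,\infty})=X_{\infty\infty}$ in $\pr$-probability and $\lim_m d_L(\mu_{\sigma,\tau,m},X_{\infty\infty})=0$; since~\eqref{eq:important_lim} already gives $\lim_m d_L(\mu_{\sigma,\tau,m},\mu_{\sigma,\tau})=0$, uniqueness of limits forces $X_{\infty\infty}=\mu_{\sigma,\tau}$, which is the claim. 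I do not expect a genuine obstacle here: the whole argument is a transcription of the proofs of Proposition~\ref{prop:GaussianESD} and Theorem~\ref{theorem:main} at $\alpha=0$, and the only two points needing a line of care are the order bookkeeping in Step~1 (checking that deleting the distance weights leaves the leading term equal to $M_{2k}$ while pushing everything else to $\bigO(N^{-1})$) and the elementary bound $(\sqrt a-\sqrt b)^2\le|a-b|$ that makes the Hoffman--Wielandt truncation estimate of Step~2 reduce to the Pareto tail bound of Lemma~\ref{lem:Par_trunc}.
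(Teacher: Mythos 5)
Your proof is correct and takes essentially the same approach the paper intends: the paper's one-line proof ("can be proven exactly as in Proposition~\ref{prop:GaussianESD}") covers your Step~1, and your Steps~2--3 (Hoffman--Wielandt truncation estimate plus Lemma~\ref{lemma:slutsky} together with~\eqref{eq:important_lim}) supply the $m=\infty$ part, which is the same mechanism used at the end of the proof of Theorem~\ref{theorem:main}. You are right that the absence of the centring and Gaussianisation steps is exactly what makes $\alpha=0$ harmless here, and the $(\sqrt a-\sqrt b)^2\le|a-b|$ reduction in Step~2 together with the bound from Lemma~\ref{lem:Par_trunc} is precisely the estimate the paper itself reuses later (in the proof of~\eqref{eq:conv2} and again in~\eqref{eq:rank_trick}).
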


We use this result to prove Theorem \ref{theorem:secondmoment}. Recall that, for a distribution function $F$, the generalised inverse is given by
$$
F^{\leftarrow}(y) := \inf \{x \in \mathbb{R} : F(x) \geq y\}, \quad 0 < y < 1.
$$
\begin{proof}[Proof of Theorem \ref{theorem:secondmoment}]
From Proposition \ref{prop:withoutalpha}, it follows that there exists a subsequence $(N_k)_{k \ge 1}$ such that $\mu_{N_k,m}$ converges weakly almost surely to $\mu_{\sigma,\tau,m}$; that is,
\begin{equation}\label{eq:almostsure}
\lim_{k\to\infty}d_L(\ESD(B_{N_k,m}), \mu_{\sigma,\tau,m})=0 \quad \text{$\pr$-almost surely}.
\end{equation}
For a $n\times n$ matrix $A$, let us denote by $\lambda_1(A) \le \lambda_2(A) \le \dots \le \lambda_n(A)$ its eigenvalues. For fixed integers $1 \le k< \infty$, $1<m<\infty$, define the following random variables on the probability space $(\Omega \times (0,1), \mathcal{F} \otimes \mathcal{B}(0,1), \ps = \pr \times \mathrm{Leb})$:
\[
Z_{k,m}(\omega, x) =  \lambda_{\left\lceil N_k x \right\rceil} \big(B_{N_k,m}(\omega)\big), \quad \omega \in \Omega,\, x \in (0,1),
\]
and
$$
Z_{k, \infty}(\omega, x) :=  \lambda_{\left\lceil N_k x \right\rceil} \big(B_{N_k,\infty}(\omega)\big), \quad \omega \in \Omega, \,x \in (0,1).
$$
Let $F_{ m}$ be the distribution function of $\mu_{\sigma,\,\tau,\,m}$ (we suppress the dependence on $\sigma$ and $\tau$ in $F_m$ for ease of notation), and define
$$Z_{\infty, m}(\omega,x) \coloneqq F_{ m}^{\leftarrow}(x), \quad \omega \in \Omega, \,x \in (0,1).$$

Now consider $L^2(\Omega \times (0,1))$ with the $\ps$ measure. This is a complete metric space, with $d(X,Y) = \mathsf{E}[(X - Y)^2]$. Our aim is to use Lemma \ref{lemma:slutsky} applied to the sequence of random variables $Z_{k,m}$. We proceed therefore to check assumptions~\ref{item:1ar} and~\ref{item:2ar} of the lemma. These will directly follow if we  prove that  
	\begin{equation}\label{eq:ass1}
	\lim_{k \to \infty} \mathsf{E}\left[\left(Z_{k,m} - Z_{\infty,m}\right)^2\right] = 0
	\end{equation}
and
	\begin{equation}\label{eq:conv2}
	\lim_{m \to \infty} \lim_{k \to \infty} \mathsf{E}\left[\left(Z_{k,m} - Z_{k,\infty}\right)^2\right] = 0.
	\end{equation}
We start by \eqref{eq:ass1}. First of all we show that
\begin{equation}\label{eq:theorem2mom:step1}
\lim_{k \to \infty}Z_{k,m} = Z_{\infty, m} \quad \ps\text{-almost surely} .
\end{equation}
Define
$$A\coloneqq A'\times (0,\,1)\coloneqq\left\{ \omega \in \Omega : \lim_{k \to \infty} d_L(\ESD(B_{{N_k},m}), \mu_{\sigma,\tau, m}) = 0, \, \forall m > 1 \right\}\times (0,1).$$
Observe that $\ps(A) = 1$ due to \eqref{eq:almostsure} and $\mathrm{Leb}(0,1) = 1$. To prove \eqref{eq:theorem2mom:step1}, it suffices to show that, for all $\omega \in A'$,
\begin{equation}\label{eq:quantile}
\lim_{k \to \infty}Z_{k,m}(\omega, x) = Z_{\infty, m}(\omega,x), \quad x \in (0,1).
\end{equation}

Let $F_{k,m}(\omega,\cdot)$ be the distribution function of $\ESD(B_{N_k,m}(\omega))$. On $A$, we have $F_{k,m}(\omega, x) \to F_{m}(x)$ for all $x$ at which $F_{m}$ is continuous. Note that
$$Z_{k,m}(\omega, x) = F^{\leftarrow}_{k,m}(\omega,x).$$

% Let $F_{k,m}$ be the distribution function of $\ESD(B_{N_k,m})$. On $A$, we have $F_{k,m}(x) \to F_{\sigma,m}(x)$ for all $x$ at which $F_{\sigma,m}$ is continuous. Note that
% $$Z_{k,m}(\omega, x) = F^{\leftarrow}_{k,m}(x).$$
It then follows from \citet[Proposition 0.1]{resnick2008extreme} that for all $x \in (0,1)$
\[
\lim_{k \to \infty}F_{k,m}^{\leftarrow}(x) = F_{m}^{\leftarrow}(x).
\]
Thus, we have proved \eqref{eq:theorem2mom:step1}.

Next, we show that for all $m \ge 1$,
\begin{equation}\label{eq:UIz}
\{Z_{k,m}^2 : 1 \le k < \infty\} \text{ is uniformly integrable}.
\end{equation}
It suffices to show that $\sup_{k \ge 1} \mathsf{E}[ Z_{k,m}^4] < \infty$. Since $\lceil N_k x \rceil$ is constant on intervals of length $1/N_k$, it easily follows that
\begin{align*}
    \lim_{k\to\infty}\mathsf{E}[Z_{k,m}^4] &=\lim_{k\to\infty} \frac{1}{N_k} \ep\left[\sum_{i=1}^{N_k} \lambda_i(B_{N_k,m})^4\right] \\
    &=\lim_{k\to\infty} \frac{1}{N_k } \ep\Tr(B_{N_k,m}^4) = \int_{\mathbb{R}} x^4 \, \mu_{\sigma,\tau,m}(\dd{x}) < \infty
\end{align*}
using \eqref{eq:truncatedmoments} and \eqref{eq:limitingmoments}, hence \eqref{eq:UIz} is proven. Using this and \eqref{eq:theorem2mom:step1}, we obtain \eqref{eq:ass1}.
% \begin{equation}\label{eq:ass1}
% \lim_{k \to \infty} \mathsf{E}\left[\left(Z_{k,m} - Z_{\infty,m}\right)^2\right] = 0.
% \end{equation}
% Finally, we show that
% \begin{equation}\label{eq:conv2}
% \lim_{m \to \infty} \lim_{k \to \infty} \mathsf{E}\left[\left(Z_{k,m} - Z_{k,\infty}\right)^2\right] = 0.
% \end{equation}

We move to \eqref{eq:conv2}.
To prove this note that 
\begin{align*}
\mathsf{E}\left[\left(Z_{k,m} - Z_{k,\infty}\right)^2\right] &= \frac{1}{N_k}\ep\left[ \sum_{j=1}^{N_k}\big(\lambda_j(B_{N_k,m})- \lambda_{j}(B_{N_k,\infty})\big)^2\right]\\
&\stackrel{\eqref{HW2}}{\le} \frac{1}{N_k}\ep\left[\Tr\big(\left( B_{N_k,m}-B_{N_k,\infty}\right)^2\big)\right]\\
&=\frac{1}{N_k}\ep\left[\sum_{i,j=1}^{N_k}\big(B_{N_k,m}(i,j)- B_{N_k,\infty}(i,j)\big)^2\right].
\end{align*}
Reasoning as in the proof of Lemma \ref{lemma:truncation},  it follows that
\begin{align*}
\frac{1}{N_k}\ep\left[\sum_{i,j=1}^{N_k}(B_{N_k,m}(i,j)- B_{N_k,\infty}(i,j))^2\right]&= \frac{1}{N_k^2}\sum_{i,j=1}^{N_k}\ep\left[\left(\sqrt{\kappa_{\sigma}(W_i^m, W_j^m)}-\sqrt{\kappa_{\sigma}(W_i, W_j)}\right)^2\right]\\
&\le \frac{2}{N_k^2}\sum_{i,j=1}^{N_k}\ep\left[\kappa_{\sigma}(W_i, W_j)\one_{W_j<m<W_i}\right]\\
&+ \frac{2}{N_k^2}\sum_{i,j=1}^{N_k}\ep\left[\kappa_{\sigma}(W_i, W_j)\one_{W_i\ge W_j>m}\right].
\end{align*}
We can use similar bounds as for Lemma \ref{lemma:truncation}, which yield that both summands have order at most $m^{2-\tau}$. Hence \eqref{eq:conv2} follows, since $\tau>2$.

Since we have now checked assumptions ~\ref{item:1ar} and~\ref{item:2ar} of Lemma \ref{lemma:slutsky}, it follows that
% Now consider $L^2(\Omega \times (0,1))$ with the $\ps$ measure. This is a complete metric space, with $d(X,Y) = \mathsf{E}[(X - Y)^2]$. We apply Lemma \ref{lemma:slutsky}; assumptions~\ref{item:1ar} and~\ref{item:2ar} follow from \eqref{eq:ass1} and \eqref{eq:conv2}, respectively. Hence 
there exists $Z_{\infty} \in L^2(\Omega \times (0,1))$ such that
$$\lim_{m \to \infty} \mathsf{E}\left[\left( Z_{\infty, m} - Z_{\infty} \right)^2\right] = 0.$$

Let $U$ be a uniform random variable on $(0,1)$. Then $F_{m}^{\leftarrow}(U)$ has the same distribution as $\mu_{\sigma,\tau,m}$. Since $\mu_{\sigma,\tau, m} $ converges weakly to $ \mu_{\sigma,\tau}$ by~\eqref{eq:important_lim}, $Z_{\infty}$ has law $\mu_{\sigma,\tau}$. Hence
$$\lim_{m\to\infty}\mathsf{E}[Z_{\infty,m}^2] = \lim_{m\to\infty}\int_{\mathbb{R}} x^2 \, \mu_{\sigma,\tau,m}(\dd{x}) = \int_{\mathbb{R}} x^2 \, \mu_{\sigma,\tau}(\dd{x}),$$
and
$$\lim_{m\to\infty}\int_{\mathbb{R}} x^2 \, \mu_{\sigma,\tau,m}(\dd{x})= (\tau-1)^2 \int_1^\infty \int_1^\infty \frac{1}{(x \wedge y)^{\tau - \sigma} (x \vee y)^{\tau - 1}} \, \dd{x} \, \dd{y} $$
which can be easily obtained from \eqref{eq:limitingmoments} with $k=1$.
This completes the proof of the first part. 

\smallskip

Since $\lim_{m\to \infty}\mu_{\sigma,\tau,m} = \mu_{\sigma,\tau}$ weakly, we apply Fatou's lemma to obtain
\begin{align*}
\int x^{2p} \, \mu_{\sigma,\tau}(\dd{x}) &\le \liminf_{m \to \infty} \int x^{2p} \, \mu_{\sigma, \tau, m}(\dd{x}) = \lim_{m \to \infty} M_{2p},
\end{align*}
where, recalling~\eqref{eq:limitingmoments},
$$M_{2p} = \sum_{\pi \in NC_2(2p)} \ep\left[\prod_{(u,v) \in E(G_{\gamma\pi})} \kappa_{\sigma}(W_u^m, W_v^m)\right].$$
For $\sigma > 0$, we observe that $(x\wedge y)^\sigma (x \vee y) \le (xy)^{\sigma \vee 1}$. Thus,
\begin{equation}\label{eq:momentbounds}
M_{2p} \le \sum_{\pi \in NC_2(2p)} \prod_{i=1}^{p+1} \E\left[ (W_i^m)^{(\sigma \vee 1)\#V_i}\right],
\end{equation}
where $\{V_1, \ldots, V_{p+1}\}$ are the blocks of $\gamma\pi$. Due to Lemma \ref{lemma:gammapi}, it follows that $\max_{1 \le i \le p+1} \#V_i \le p$, typically achieved by partitions $\pi$ such that $$\gamma\pi= \{(1, 3, \ldots, 2p-1), (2), (4), \ldots, (2p)\}.$$ 
This shows that the maximum moment bound required for the right-hand side of \eqref{eq:momentbounds} to remain finite is $\E[(W_i)^{p(\sigma \vee 1)}]$. Since $W_i$ has a tail index of $\tau - 1$, if $p(\sigma \vee 1) < \tau - 1$, then $\E[(W_i)^{p(\sigma \vee 1)}] < \infty$. Therefore, $M_{2p} $ is uniformly bounded in $m $, completing the proof of the theorem.
\end{proof}

\section{Absolute continuity and symmetry: proof of Theorem \ref{theorem:absolutecontinuity}}\label{sec:absolutecontinuity}

We begin by showing absolute continuity. We shall use the following fact from \citet[Fact 2.1]{chakrabarty2016remarks}, which follows from \citet[Proposition 22.32]{Nica:Speicher}.
\begin{lemma}\label{lemma:Fact2.1} Assume that, for each $N, A_N$ is a $N \times N$ Gaussian Wigner matrix scaled by $\sqrt{N}$, that is, $\left(A_N(i, j): 1 \leq i \leq j \leq N\right)$ are i.i.d.~normal random variables with mean zero and variance $1 / N$, and $A_N(j, i)=A_N(i, j)$. Suppose that $B_N$ is a $N \times N$ random matrix, such that for all $k\geq 1$
$$
\lim_{N\to\infty}\frac{1}{N} \operatorname{Tr}\left(B_N^k\right)= \int_{\mathbb{R}} x^k \mu(\dd x)
$$
in probability,
for some compactly supported (deterministic) probability measure $\mu$. Furthermore, let the families $\left(A_N: N \geq 1\right)$ and $\left(B_N: N \geq 1\right)$ be independent. Then for all $k\geq 1$

$$
\lim_{N\to\infty}\frac{1}{N} \mathrm{E}_{\mathcal{F}} \operatorname{Tr}\left[\left(A_N+B_N\right)^k\right] =\int_{\mathbb{R}} x^k \mu \boxplus \mu_{sc}(\dd x) 
$$
in probability,
where $\mathcal{F}:=\sigma\left(B_N: N \geq 1\right)$ and $\mathrm{E}_{\mathcal{F}}$ denotes the conditional expectation with respect to $\mathcal{F}$.
\end{lemma}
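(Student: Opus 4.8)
To prove Lemma~\ref{lemma:Fact2.1} I would run a conditional moment computation built on the genus expansion for Gaussian matrices; alternatively, one can invoke~\citet[Proposition~22.32]{Nica:Speicher}, which gives that a Gaussian Wigner matrix is asymptotically free from an independent sequence of matrices admitting a limiting distribution, and then transfer the statement to the conditional expectation as below. Fix $k\ge1$. Since $A_N$ is independent of $\mathcal F$, the operator $\mathrm{E}_{\mathcal F}$ integrates out only the Gaussian entries of $A_N$ while keeping $B_N$ frozen; expanding the power gives
\[
\frac1N\mathrm{E}_{\mathcal F}\Tr\big[(A_N+B_N)^k\big]=\sum_{w\in\{a,b\}^k}\frac1N\,\mathrm{E}_{\mathcal F}\Tr\Big[\,\prod_{\ell=1}^k M_N^{(w_\ell)}\,\Big],\qquad M_N^{(a)}=A_N,\ M_N^{(b)}=B_N.
\]
The purely-$b$ word yields exactly $\frac1N\Tr(B_N^k)\to\int x^k\,\mu(\dd x)$ in probability.

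For a mixed word $w$ I would expand each $A_N$ into its entries and apply Wick's formula~\eqref{eq:Wick} to the $A$-letters, whose positions form a set $P_a$ of even size $2p$ (odd Gaussian moments vanish). Using $\mathrm{E}[A_N(i,j)A_N(k,l)]=N^{-1}(\delta_{il}\delta_{jk}+\delta_{ik}\delta_{jl})$ and summing over the free indices, each pair partition $\rho\in\mathcal P_2(P_a)$ contributes a power of $N$ times a product of normalised traces $\frac1N\Tr(B_N^{a})$ of powers of $B_N$. The standard planarity bound shows this net power is $\le0$, with equality exactly for the planar configurations: $\rho$ a non-crossing pairing of $P_a$ together with the orientation-reversing contraction $\delta_{il}\delta_{jk}$ at every pair. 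For such $\rho$ the chords cut the cyclic word into regions inside which the $B$-letters merge into a single power, so the term equals a product $\prod_q \frac1N\Tr(B_N^{b_q})$ with $\sum_q b_q$ the number of $B$-letters of $w$; all remaining configurations carry a factor $O(N^{-1})$ times a finite product of normalised traces of powers of $B_N$. Thus $\frac1N\mathrm{E}_{\mathcal F}\Tr[(A_N+B_N)^k]=\sum_{\mathbf a}c_{N,\mathbf a}\prod_i\frac1N\Tr(B_N^{a_i})$, a finite sum whose coefficients $c_{N,\mathbf a}$ are bounded and converge ($c_{N,\mathbf a}\to c_{\mathbf a}$, a nonnegative integer for the planar terms and $0$ otherwise).

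Since $\frac1N\Tr(B_N^{a})\to\int x^a\,\mu(\dd x)$ in probability for every $a\ge1$ and only finitely many terms occur, every product of such normalised traces is bounded in probability, so the $O(N^{-1})$ terms vanish in probability and
\[
\lim_{N\to\infty}\frac1N\mathrm{E}_{\mathcal F}\Tr\big[(A_N+B_N)^k\big]=\sum_{\mathbf a}c_{\mathbf a}\prod_i\int x^{a_i}\,\mu(\dd x)\qquad\text{in probability},
\]
a deterministic number. It remains to recognise the right-hand side as $\int x^k\,(\mu\boxplus\mu_{sc})(\dd x)$. I would do this through the moment--free-cumulant formula: writing $\int x^k\,(\mu\boxplus\mu_{sc})(\dd x)=\sum_{\pi\in NC(k)}\prod_{V\in\pi}\big(\kappa_{|V|}(\mu)+\kappa_{|V|}(\mu_{sc})\big)$, and using that the semicircle has $\kappa_2(\mu_{sc})=1$ as its only nonzero free cumulant, one expands the product and labels each block as a ``semicircular block'' (necessarily a pair) or a ``$\mu$-block''; the chosen pairs match the non-crossing pairing $\rho$ of the $A$-positions, while the nested $\mu$-blocks reassemble, by the same moment--cumulant formula, into the factors $\int x^{b_q}\,\mu(\dd x)$. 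Compact support of $\mu$ ensures $\mu\boxplus\mu_{sc}$ is a well-defined, compactly supported measure pinned down by its moments, which closes the argument.

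The main obstacle is the bookkeeping in the last two paragraphs: one must verify, uniformly over the realisation of $B_N$, that non-planar pairings and diagonal corrections genuinely cost a factor $N^{-1}$ --- this is exactly where boundedness in probability of all normalised traces of powers of $B_N$ is used --- and one must match the planar diagram sum with the free-convolution moment precisely, tracking how the $\mu$-blocks nest inside the semicircular pairs. The conditioning itself is harmless: after integrating out $A_N$ the left-hand side is, up to coefficients converging to fixed constants, a polynomial in the $\frac1N\Tr(B_N^{a})$, so convergence in probability is inherited directly from the hypothesis on $B_N$.
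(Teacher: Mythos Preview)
Your proposal is correct and, in fact, more detailed than what the paper does: the paper does not prove this lemma at all but simply quotes it as \citet[Fact 2.1]{chakrabarty2016remarks}, noting that it follows from \citet[Proposition 22.32]{Nica:Speicher}. Your alternative route via the conditional genus expansion is the standard proof of asymptotic freeness of a Gaussian Wigner matrix from an independent ensemble, and you correctly observe at the outset that one may instead invoke the same Nica--Speicher result the paper relies on; the remainder of your write-up (Wick pairing, planarity bound, matching planar diagrams with the moment--free-cumulant expansion of $\mu\boxplus\mu_{sc}$, and the passage to conditional expectation via polynomials in $\frac1N\Tr(B_N^a)$) is sound.
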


\begin{proof}[Proof of Theorem \ref{theorem:absolutecontinuity}]
We consider the truncated weights $(W_i^m)_{i \ge 1}$. Let $\Gamma_{m}$ be an $N \times N$ matrix with entries given by
$$\Gamma_{m}(i,j) = \sqrt{\kappa_\sigma(W_i^m, W_j^m)}.$$

Given $\delta \in (0,1)$, define the function $g_{\delta,m}$ such that
\begin{equation*}
g_{\delta,m}(W_i^m, W_j^m)^2 = \left(\sqrt{\kappa_\sigma(W_i^m, W_j^m)} - \delta\right)^2 + 2\delta\left(\sqrt{\kappa_\sigma(W_i^m, W_j^m)} - \delta\right).
\end{equation*}
As a consequence
\begin{equation}\label{eq:varmatch}
g_{\delta,m}(W_i^m, W_j^m)^2 + \delta^2 = \kappa_\sigma(W_i^m, W_j^m)\,.
\end{equation}

Define the matrix $\Gamma_{g_{\delta,m}}(i,j) = g_{\delta,m}(W_i^m, W_j^m)$. Let $\{G_{i,j}\}_{1 \leq i,j \leq N}$ be i.i.d.~standard Gaussian random variables, independent of the sequence $(W_i)_{i \ge 1}$. Denote by $\mfG_N$ the matrix with entries
\[
\mfG_N(i,j) = \frac{1}{\sqrt{N}} G_{i \wedge j, i \vee j}\,.
\]
Define
\[
\B_{N,m}^{(1)} = \Gamma_{m} \circ \mfG_N\,.
\]
Similarly, define 
\[
\B_{N,m}^{(2)} = \Gamma_{g_{\delta,m}} \circ \mfG_N\,.
\] 
Lastly, consider a sequence of i.i.d.~standard Gaussian random variables $(G'_{i,j})_{1 \leq i,j \leq N}$, independent of the sigma field $\mathcal{F}$ generated by $(W_i)_{i \ge 1}, (G_{i,j})_{i,j \ge 1}$. Define a matrix $\B_{N,m}^{(3)}$ with entries
\[
\B_{N,m}^{(3)}(i,j) = \frac{1}{\sqrt{N}} G'_{i \wedge j, i \vee j}\,.
\]

We claim that, conditionally on $(W_i)_{i \in [N]}$, 
\begin{equation}\label{eq:B_N 1,2, and 3}
\B_{N,m}^{(1)} \stackrel{d}{=} \B_{N,m}^{(2)} + \delta \B_{N,m}^{(3)}.
\end{equation}
Indeed, conditionally on $(W_i)_{i \in [N]}$, the entries of $\B_{N,m}^{(1)}$, $\B_{N,m}^{(2)}$, and $\B_{N,m}^{(3)}$ are normally distributed. Thus, it is sufficient to compare the mean and variance of the entries. All the variables in question have mean zero and the variances match, too, due to \eqref{eq:varmatch}.
Following Proposition~\ref{prop:withoutalpha}, there exists a measure $\mu_{g_{\delta, m}}$ such that
\[
\lim_{N\to\infty}\frac{1}{N} \Tr\left((\B_{N,m}^{(2)})^k\right) =\int_{\mathbb{R}} x^k \, \mu_{g_{\delta, m}}(\De x)
\]
in probability.
In particular, we recall the expression for the even moments of $\mu_{g_{\delta, m}}$ given  in~\eqref{eq:limitingmoments}:
\[
M_{2k} = \sum_{\pi \in NC_2(2k)} \mathbb{E}\left[\prod_{(u,v) \in E(G_{\gamma\pi})} g_{\delta, m}^2(W_u^m, W_v^m)\right].
\]
Since $g_{\delta,m}^2(W_u^m, W_v^m) \le \kappa_\sigma(W_u^m, W_v^m)$, it follows that $\mu_{g_{\delta, m}}$ is uniquely determined by its moments, and is also compactly supported (Corollary~\ref{cor:general_kappa}). This verifies the first condition of Lemma \ref{lemma:Fact2.1}. Since $\B_{N,m}^{(3)}$ is a standard Wigner matrix, it follows from Lemma \ref{lemma:Fact2.1} that
\[
\lim_{N\to\infty}\frac{1}{N} \mathbb{E}_{\mathcal{F}}\left[\Tr\big((\B_{N,m}^{(2)} + \delta \B_{N,m}^{(3)})^k\big)\right] = \int_{\mathbb{R}} x^k \, (\mu_{g_{\delta,m}} \boxplus \mu_{sc,\delta})(\De x),
\]
where $\mu_{sc,\delta}$ is the semicircular law with variance $\delta^2$ and density
\[
\mu_{sc,\delta}(\dd x) = \frac{1}{2 \pi \delta} \sqrt{4 - \left(\frac{x}{\delta}\right)^2} \one_{|x| \leq 2 \delta} \, \dd x, \quad x \in \mathbb{R}.
\]
Since both $\mu_{g_{\delta,m}}$ and $\mu_{sc,\delta}$ are compactly supported, so is $\mu_{g_{\delta,m}} \boxplus \mu_{sc,\delta}$, and thus the measure is completely determined by its moments. 

From Proposition \ref{prop:GaussianESD} we have
\[
\lim_{N\to\infty}\mathbb{E}\left[\frac{1}{N} \mathbb{E}_{\mathcal{F}}[\Tr(\B_{N,m}^{(1)})^k]\right] = \int_{\mathbb{R}} x^k \, \mu_{\sigma,\tau,m}(\dd x)
\]
and
\[
\lim_{N\to\infty}\mathrm{Var}\left(\frac{1}{N} \mathbb{E}_{\mathcal{F}}[\Tr((\B_{N,m}^{(1)})^k)]\right) \le \lim_{N\to\infty}\mathrm{Var}\left(\frac{1}{N} \Tr((\B_{N,m}^{(1)})^k)\right) = 0.
\]
Thus,
$$\lim_{N\to\infty}\frac{1}{N} \mathbb{E}_{\mathcal{F}}\left[\Tr(\B_{N,m}^{(1)})^k\right] = \int_{\mathbb{R}} x^k \, \mu_{\sigma,\tau,m}(\dd x)$$
in probability.
Since the measures are uniquely determined by their moments, this shows that 
\begin{equation}\label{eq:help_local}   
\mu_{\sigma,\tau, m} =  \mu_{g_{\delta,m}} \boxplus \mu_{sc,\delta}. 
\end{equation}
We show that there exists $\mu_{g_{\delta}}$ such that
\begin{equation}\label{eq:dg}
\lim_{m\to\infty}d_L(\mu_{g_{\delta, m}}, \mu_{g_{\delta}}) = 0.
\end{equation}
If we can prove this, using \citet[Proposition 4.13]{bercovici1993free} it will follow that
\begin{equation}\label{eq:BerVoideltam}
\lim_{m\to\infty}d_L(\mu_{g_{\delta,m}} \boxplus \mu_{sc,\delta}, \mu_{g_{\delta}} \boxplus \mu_{sc,\delta}) \le \lim_{m\to\infty}d_L(\mu_{g_{\delta, m}}, \mu_{g_{\delta}}) = 0.
\end{equation}
To show \eqref{eq:dg}, we  employ Lemma \ref{lemma:slutsky}. Note that, from Remark~\ref{cor:general_kappa}, we get that for any fixed $m \ge 1$ one has
\[\lim_{N\to\infty}d_L\big(\mu_{\B_{N,m}^{(2)}}, \mu_{g_{\delta,m}}\big)=0\quad\text{ in $\mathbb{P}$-probability} \] 
where $\mu_{\B_{N,m}^{(2)}}$ is the empirical spectral distribution of $\B_{N,m}^{(2)}$.

This establishes condition \ref{item:1ar} of Lemma \ref{lemma:slutsky}. To complete the proof, we need to verify condition \ref{item:2ar}, namely,
\begin{equation}\label{eq:rank_trick}
\lim_{m \to \infty}\limsup_{N \to \infty} \mathbb{P}\big(d_L(\ESD(\B_{N,m}^{(2)}), \ESD(\B_{N}^{(2)})) > \varepsilon\big) = 0.
\end{equation}
 Here $\B_N^{(2)}$ is defined as $\B_{N,\infty}^{(2)}$ with $m = \infty$. 
% Using the rank inequality in~\citet[Theorem A.43]{Bai-silverstein} we have
%  \begin{align*}
% d_L\left(\mu_{\B_{N,m}^{(2)}}, \mu_{\B_{N}^{(2)}}\right)&\le \frac{1}{N}\mathrm{rank}\left(\B_{N,m}^{(2)}-\B_{N}^{(2)}\right)\\
% &\leq \frac1N\left(N-\#\{i\in [N]:\,W_i\leq m\}\right)=\frac{\#\{i\in[N]:\,W_i> m\}}{N}
% \end{align*}
% since, whenever $W_i\leq m$, the $i$-th row of $\B_{N,m}^{(2)}-\B_{N}^{(2)}$ is identically zero. In expectation this means that
% \[
% \E\left[d_L\left(\mu_{\B_{N,m}^{(2)}}, \mu_{\B_{N}^{(2)}}\right)\right]\leq \frac1N\sum_{i\in [N]}\prob(W_i>m)=cm^{2-\tau}.
% \]
 From Proposition \ref{HWinequality} we see that 
\begin{align*}
d_L\left(\ESD(\B_{N,m}^{(2)}), \ESD(\B_{N}^{(2)})\right)^3 &\le \frac{1}{N}\Tr\left( \big(\B_{N,m}^{(2)}- \B_N\big)^2\right)\\
&= \frac{1}{N^2}\sum_{i,j=1}^N \left(\Gamma_{g_{\delta,m}}(i,j)-\Gamma_{g_{\delta,\infty}}(i,j)\right)^2 G_{i\wedge j,i\vee j}^2\,.
\end{align*}
Hence we have
\begin{align*}
&\E\left[d_L\big(\ESD(\B_{N,m}^{(2)}), \ESD(\B_{N}^{(2)})\big)^3\right]\leq \frac1{N^2}\sum_{i\neq j=1}^N\ep[\left(\Gamma_{g_{\delta,m}}(i,j)-\Gamma_{g_{\delta,\infty}}(i,j)\right)^2]\\
&\qquad\leq \frac{2}{N^2}\sum_{i\neq j=1}^N\ep\left[ g_{\delta,\infty}(W_i,\,W_j)^2\left(\one_{W_j<m<W_i}+\one_{ W_i>W_j>m}\right)\right]\\
&\qquad\le \frac{2}{N^2}\sum_{i\neq j=1}^N\ep\left[ \kappa_{\sigma}(W_i, W_j)\left(\one_{W_j<m<W_i}+\one_{ W_i>W_j>m}\right)\right].
\end{align*}
Just as in the proof of \eqref{eq:conv2}, it follows that the last term is bounded by $Cm^{2-\tau}$. Thus, using Markov's inequality, condition \ref{item:2ar}  of Lemma \ref{lemma:slutsky} holds, too. 
In conclusion, we can show that there exists $\mu_{g_{\delta}}$ such that
\begin{align*}
\lim_{m\to\infty}d_L( \mu_{g_{\delta,m}} \boxplus \mu_{sc,\delta}, \mu_{\sigma,\tau}) &\stackrel{\eqref{eq:help_local}}{=}\lim_{m\to\infty} d_L(\mu_{\sigma,\tau,m}, \mu_{\sigma,\tau}) \stackrel{\eqref{eq:important_lim}}{=}0\\
&\stackrel{\eqref{eq:BerVoideltam}}{=}\lim_{m\to\infty}d_L(\mu_{g_{\delta,m}} \boxplus \mu_{sc,\delta}, \mu_{g_{\delta}} \boxplus \mu_{sc,\delta}).  
\end{align*}
Therefore it must be that $\mu_{\sigma,\tau} = \mu_{g_{\delta}} \boxplus \mu_{sc,\delta}$. 
The right-hand side is absolutely continuous, as shown by~\citet[Corollary 2]{Biane97}.

Finally, to show symmetry, we see that $\mu_{\sigma,\tau}$ does not give weight to singletons by absolute continuity. Therefore, in light of the weak convergence stated in~\eqref{eq:important_lim},
\[
\mu_{\sigma,\,\tau}(-\infty,\,-x)=\lim_{m\to\infty}\mu_{\sigma,\,\tau,\,m}(-\infty,\,-x)=\lim_{m\to\infty}\mu_{\sigma,\,\tau,\,m}(x,\,+\infty)=\mu_{\sigma,\,\tau}(x,\,+\infty)
\]
for all $x\geq 0$. This completes the proof. 
\qedhere
\end{proof}

\section{Stieltjes transform: proof of Theorem \ref{theorem:stieltjesfinal}}\label{sec:stieltjes}
To prove Theorem \ref{theorem:stieltjesfinal}, we first identify the Stieltjes transform for the measure $\mu_{\sigma,\tau,m}$. We then proceed to take the limit $m\to\infty$, which requires a functional analytic approach. 
Throughout this section, we fix $z\in\C^+$, given as $z=\xi+\ota\eta$ with $\eta>0$. If $\mu$ is a probability measure having all its moments $\{m_k\}_{k\geq 1}$, it follows from the definition of Stieltjes transform \eqref{def:ST}  that, for any $z\in\C^+$, 
\begin{equation}\label{eq:Stieltjesmomentsrelation}
\mathrm{S}_{\mu}(z) = -\sum_{k\geq0}\frac{m_k}{z^{k+1}},
\end{equation}
where the Laurent series on the right-hand side of \eqref{eq:Stieltjesmomentsrelation} converges for $|z|>R>0$, with $\mathrm{supp}(\mu) = [-R,R]$. 

\subsection{Stieltjes transform for $\mu_{\sigma,m,\tau}$}\label{subsec:Stieltjestruncated}
To derive a characterisation of the limiting measure $\mu_{\sigma,\tau}$, we need to first study the truncated version $\mu_{\sigma,\tau,m}$. We borrow ideas from the proof of \citet[Theorem 4.1]{Chakrabarty:Hazra:Sarkar:2015}. The main result of this subsection will be Proposition \ref{Stieltjes:finitem}, which requires a few technical lemmas to prove. The results in this subsection hold for the regime $\tau>2$ and $\sigma<\tau-1$, as before. 

We have that the (even) moments for the measure $\mu_{\sigma,\tau,m}$ are given by \eqref{eq:limitingmoments}. Using these, we derive a representation of $\mathrm{S}_{\mu_{\sigma, \tau,m}}(z)$. 

\begin{proposition}
    \label{Stieltjes:finitem}
For $\tau>2$ and $\sigma\in(0,\,\tau-1)$ there exists a function $a(z,x)=a_m(z,x)$ defined on $\C^+\times [1,\infty)$ such that
 \[\mathrm{S}_{\mu_{\sigma,\tau,m}}(z) = \int_1^\infty a(z,x) \mu_{W,m}(\dd{x})\,,\]
 where $\mu_{W,m}$ is the law of the truncated weights $(W_i^m)$.
 Moreover, $a(z,x)$ satisfies the following recursive equation:
\begin{equation}\label{eq: a(z,x) expansion}
   a(z,x)\left(z + \int_1^{\infty}a(z,y)\kappa_\sigma(x,y)\mu_{W,m}(\dd{y})\right) = -1\,.
\end{equation}
\end{proposition}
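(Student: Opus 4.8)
The plan is to run the moment method for the compactly supported measure $\mu_{\sigma,\tau,m}$ and to ``root'' the tree sums appearing in~\eqref{eq:limitingmoments}. Since the truncated weights are bounded by $m$, the bound $M_{2k}\le C_k\,m^{(1+\sigma)k}$ from~\eqref{eq:bound_mom_Cat} ($C_k$ the $k$-th Catalan number) shows $\mathrm{supp}(\mu_{\sigma,\tau,m})\subseteq[-R_m,R_m]$ with $R_m:=2\,m^{(1+\sigma)/2}$, so by~\eqref{eq:Stieltjesmomentsrelation} the Laurent series $\mathrm{S}_{\mu_{\sigma,\tau,m}}(z)=-\sum_{k\ge0}M_{2k}z^{-2k-1}$ converges absolutely for $|z|>R_m$. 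For $k\ge0$ and $x\ge1$ I set the \emph{rooted moments}
\[
b_{2k}(x):=\sum_{\pi\in NC_2(2k)}\E\!\left[\prod_{(u,v)\in E(G_{\gamma\pi})}\kappa_\sigma(\omega_u,\omega_v)\right],\qquad b_0(x):=1,
\]
where in the tree $G_{\gamma\pi}$ the root $V_1$ carries the deterministic weight $\omega_{V_1}=x$ and $\omega_{V_2},\dots,\omega_{V_{k+1}}$ are i.i.d.\ $\sim\mu_{W,m}$. Conditioning~\eqref{eq:limitingmoments} on the root weight gives $M_{2k}=\int_1^\infty b_{2k}(x)\,\mu_{W,m}(\dd x)$, and the argument behind~\eqref{eq:bound_mom_Cat} yields $b_{2k}(x)\le C_k\,(x\vee m)^{(1+\sigma)k}$.

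\textbf{The recursion and the functional equation.} The combinatorial core is a first-return decomposition: for $\pi\in NC_2(2k)$, the position $1$ is paired with an even position $2\ell$ ($1\le\ell\le k$), which in $G_{\gamma\pi}$ is the edge from the root $V_1$ to a child $U$. Splitting the closed walk $1\to2\to\cdots\to2k\to1$ at $2\ell$, the block $2\to\cdots\to2\ell$ is a closed sub-walk based at $U$ realising a partition in $NC_2(2(\ell-1))$, and $2\ell+1\to\cdots\to2k\to1$ is a closed sub-walk based at $V_1$ realising a partition in $NC_2(2(k-\ell))$; the two sub-trees use disjoint (hence, given the weights of $V_1$ and $U$, independent) fresh weights, $U$ getting a new sample $y\sim\mu_{W,m}$ while $V_1$ keeps its weight $x$. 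Taking expectations and summing over $\ell$,
\[
b_{2k}(x)=\sum_{\ell=1}^{k}\Big(\int_1^\infty\kappa_\sigma(x,y)\,b_{2(\ell-1)}(y)\,\mu_{W,m}(\dd y)\Big)\,b_{2(k-\ell)}(x),\qquad k\ge1.
\]
Put $f(z,x):=\sum_{k\ge0}b_{2k}(x)z^{-2k}$ (convergent for $|z|>2(x\vee m)^{(1+\sigma)/2}$) and $a_m(z,x):=-z^{-1}f(z,x)$ there. Multiplying the recursion by $z^{-2k}$, summing over $k\ge1$ and recognising a Cauchy product (the sum--integral interchange being legitimate by positivity and the moment bounds) turns it into $f(z,x)\big(z^2-\int_1^\infty\kappa_\sigma(x,y)f(z,y)\mu_{W,m}(\dd y)\big)=z^2$, and substituting $f=-z\,a_m$ gives precisely~\eqref{eq: a(z,x) expansion}. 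Fubini (licit for $|z|>R_m$) then yields $\mathrm{S}_{\mu_{\sigma,\tau,m}}(z)=-\sum_{k\ge0}z^{-2k-1}\int_1^\infty b_{2k}(x)\mu_{W,m}(\dd x)=\int_1^\infty a_m(z,x)\,\mu_{W,m}(\dd x)$.

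\textbf{Passing to all of $\C^+$.} So far the identities hold only for $|z|$ large; to define $a_m$ on $\C^+\times[1,\infty)$ I would show that for each fixed $x\ge1$ the sequence $(b_{2k}(x))_{k\ge0}$ is a Hamburger moment sequence. Combined with the growth bound (Carleman), it is then the moment sequence of a unique, compactly supported, symmetric probability measure $\rho_{m,x}$, so that $a_m(z,x)=\mathrm{S}_{\rho_{m,x}}(z)$ extends analytically to $\C\setminus\mathrm{supp}(\rho_{m,x})\supseteq\C^+$. Positivity of $(b_{2k}(x))_k$ follows by a rooting trick: adjoin a distinguished vertex $0$ of weight $x$ and consider the $(N+1)\times(N+1)$ Gaussian matrix $\hat B_{N,m}$ with entries $N^{-1/2}\sqrt{\kappa_\sigma(W_i^m,W_j^m)}\,G_{i\wedge j,i\vee j}$ ($W_0^m\equiv x$); then $\sum_{i,j}c_ic_j\,\E[(\hat B_{N,m}^{\,i+j})_{00}]=\E\,\|p(\hat B_{N,m})e_0\|^2\ge0$ for every polynomial $p$, while a Wick computation as in Proposition~\ref{prop:GaussianESD} gives $\E[(\hat B_{N,m}^{\,2k})_{00}]\to b_{2k}(x)$ and $\E[(\hat B_{N,m}^{\,2k+1})_{00}]=0$, so Hankel positivity passes to the limit. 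With $a_m(\cdot,x)$ analytic on $\C^+$ and $|a_m(z,y)|\le(\Im z)^{-1}$ uniformly in $y$, both the integral representation and~\eqref{eq: a(z,x) expansion} (whose integrals are analytic in $z$ by dominated convergence, since $\mu_{W,m}$ is compactly supported and $\kappa_\sigma(x,\cdot)$ is bounded on its support) extend from $\{|z|>R_m\}$ to all of $\C^+$ by the identity theorem.

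\textbf{Main obstacle.} The delicate point is the third step — verifying that $a_m(\cdot,x)$ is genuinely the Cauchy transform of a probability measure, i.e.\ that $(b_{2k}(x))_k$ satisfies Hankel positivity — since this is what licenses the passage from the large-$|z|$ (formal power series) identities to statements valid on all of $\C^+$; the recursion itself is only careful bookkeeping, the one subtlety being which sub-walk keeps the root weight $x$ and which spawns the new weight $y$. One may alternatively read~\eqref{eq: a(z,x) expansion} as the self-consistent (quadratic vector / Dyson) equation for the resolvent diagonal of $B_{N,m}$, with $a_m(z,x)$ the limit of $(B_{N,m}-z)^{-1}_{ii}$ over vertices with $W_i^m=x$; that route replaces the moment-sequence argument by a stability analysis of the equation.
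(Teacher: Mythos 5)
Your proposal matches the paper's proof in its combinatorial core: your rooted moments $b_{2k}(x)$ are exactly $\sum_{\pi\in NC_2(2k)}H_\pi(x)$ from the paper, your first-return decomposition (position $1$ pairs with some $2\ell$, splitting the closed walk into an excursion off the root and a remainder) is precisely the paper's word decomposition $w=aw_1aw_2$ combined with Lemma~\ref{lemma:Hproperties}, and your Cauchy-product manipulation reproduces the derivation of~\eqref{eq: a(z,x) expansion}. Where you genuinely diverge is the passage from $\{|z|>R_m\}$, where the Laurent series converges, to all of $\C^+$. The paper treats this briefly — it asserts analyticity of $z\mapsto a(z,x)$ ``from the Laurent series expansion'' and then uses a connectedness argument together with $\St_{\mu_{W,m}}(z)\in\C^+$ to conclude $a(z,x)\in\C^+$ — whereas you propose to \emph{construct} the underlying measure $\rho_{m,x}$, by proving Hankel positivity of $(b_{2k}(x))_k$ through the diagonal resolvent entry of a Gaussian matrix with a distinguished index $0$ of pinned weight $x$. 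This is a clean and arguably more rigorous route: once $a_m(\cdot,x)=\St_{\rho_{m,x}}$ is established on $|z|>R_m$, analyticity on $\C^+$, the bound $|a_m(z,x)|\le (\Im z)^{-1}$, and the extension of both identities across $\C^+$ follow from standard facts about Stieltjes transforms and the identity theorem, with no need for the paper's less explicit reasoning about the image of an analytic map. The remaining work on your side, which you correctly flag as the main obstacle, is the convergence $\E[(\hat B_{N,m}^{2k})_{00}]\to b_{2k}(x)$: this requires a variant of the Wick computation of Proposition~\ref{prop:GaussianESD} with a pinned starting index rather than a trace averaging over the root, and in particular a pinned version of Claim~\ref{claim:count}, which is plausible but not a free consequence of what is already proved. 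Both routes deliver the proposition; yours makes the analytic continuation explicit at the cost of one additional random-matrix computation.
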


Before tackling the proof of the proposition, we lay the ground with two auxiliary results.
For any $k\geq1$ and $\pi\in NC_2(2k)$, recall the map $\Tpi$ of~\eqref{def:Tpi},
where $\gamma\pi = \{V_1,\ldots,V_{k+1}\}$. Consider the mapping $L_\pi:[1,\infty)^{k+1} \to \mathbb{R}$ defined as 
\begin{align}\label{eq:Lmapping}
    L_\pi(\mathbf{x}) =\kappa_{\sigma}^{1/2}(x_{\Tpi(1)},x_{\Tpi(2)})\kappa_{\sigma}^{1/2}(x_{\Tpi(2)},x_{\Tpi(3)})\ldots\kappa_{\sigma}^{1/2}(x_{\Tpi(2k)},x_{\Tpi(1)})
\end{align}
and the function $H_\pi:\mathbb R\to\mathbb R^{+}$ given as 
\begin{align}\label{eq:Hmapping}
    H_\pi(y) = \int_{[1,\infty)^k}L_\pi(y,\,x_2,\ldots,\,x_{k+1})\mu_{W,m}^{\otimes k}(\dd{\mathbf{x}'})\,,
\end{align}
where we are integrating over $\mathbf x'=(x_2,\dots,x_{k+1})\in[1,\infty)^k$.
% where $\mu_{W,m}$ is the law of the truncated weights $(W_i^m)_{i\geq1}$. 
% We now have the following lemma. 
\begin{lemma}\label{lemma:moments_Hpi}
Let $\{M_{2k}\}_{k\geq1}$ be as in \eqref{eq:limitingmoments}. Then
\begin{align*}
    M_{2k} = \sum_{\pi\in NC_2(2k)}\int_1^{\infty}H_\pi(y)\mu_{W,m}(\dd{y}).
\end{align*}
    
\end{lemma}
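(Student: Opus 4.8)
The plan is to unfold both sides and match them term by term over $\pi\in NC_2(2k)$. Recall from \eqref{eq:limitingmoments} that $M_{2k}=\sum_{\pi\in NC_2(2k)}\ep[\prod_{(u,v)\in E(G_{\gamma\pi})}\kappa_\sigma(W_u^m,W_v^m)]$, and by Lemma \ref{lemma:gammapi} the graph $G_{\gamma\pi}$ is a rooted tree on the $k+1$ vertices $V_1,\dots,V_{k+1}$ (the blocks of $\gamma\pi$), rooted at the block $V_1\ni 1$, with its $k$ edges obtained from the closed walk $1\to 2\to\cdots\to 2k\to 1$ on $[2k]$ via the quotient map $i\mapsto\Tpi(i)$ of \eqref{def:Tpi}. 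Since in \eqref{eq:limitingmoments} the weight attached to an index is constant along each block of $\gamma\pi$, while distinct vertices of $G_{\gamma\pi}$ arise from disjoint blocks and hence carry independent weights, I would first rewrite each $\pi$-term as an integral over $k+1$ i.i.d.\ $\mu_{W,m}$-distributed weights $x_1,\dots,x_{k+1}$, one per vertex, namely $\ep[\prod_{(u,v)}\kappa_\sigma(W_u^m,W_v^m)]=\int_{[1,\infty)^{k+1}}\prod_{(u,v)\in E(G_{\gamma\pi})}\kappa_\sigma(x_u,x_v)\,\mu_{W,m}^{\otimes(k+1)}(\dd \mathbf{x})$, where $x_u$ denotes the weight of the vertex on which $u$ lies.

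The heart of the argument is the identity $\prod_{(u,v)\in E(G_{\gamma\pi})}\kappa_\sigma(x_u,x_v)=L_\pi(\mathbf{x})$ with $L_\pi$ as in \eqref{eq:Lmapping}. Indeed, the sequence $\Tpi(1),\Tpi(2),\dots,\Tpi(2k),\Tpi(1)$ is, by the construction of $G_{\gamma\pi}$, a closed walk of length $2k$ on this tree, and since $\Tpi$ is onto $[k+1]$ it visits every vertex; a closed walk on a tree traverses each edge an even number of times (deleting an edge disconnects the tree and the walk must return), so, the tree having exactly $k$ edges and the walk exactly $2k$ steps, each edge is traversed exactly twice. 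Pairing the two half-factors $\kappa_\sigma^{1/2}$ produced at the two traversals of each edge yields $L_\pi(\mathbf{x})=\prod_{(u,v)\in E(G_{\gamma\pi})}\kappa_\sigma(x_u,x_v)$. Because $\Tpi(1)=1$, the coordinate $x_1$ is exactly the argument held fixed in the definition \eqref{eq:Hmapping} of $H_\pi$; since $L_\pi\geq 0$, Tonelli's theorem then lets me integrate out $x_2,\dots,x_{k+1}$ first, turning the $\pi$-term into $\int_1^\infty H_\pi(y)\,\mu_{W,m}(\dd y)$. Summing over $NC_2(2k)$ gives the claim, the sum being finite by \eqref{eq:bound_mom_Cat}.

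The main difficulty, modest as it is, is the clean bookkeeping of labels: one must keep the vertex-weights $x_j$ indexed consistently with the blocks $V_j$ of $\gamma\pi$ and with $\Tpi$, and in particular verify that the root block $V_1$ (the one containing $1$) matches the distinguished first argument of $H_\pi$. The combinatorial fact that the closed walk uses each tree edge exactly twice should be stated with care — it also shows no step is a self-loop, i.e.\ $\Tpi(\ell)\neq\Tpi(\ell+1)$ for all $\ell$, which is consistent with $G_{\gamma\pi}$ being a genuine tree. Beyond that, no analytic subtleties appear: every integrand is non-negative so Tonelli applies verbatim, and $M_{2k}<\infty$ is already established in Proposition \ref{prop:GaussianESD}.
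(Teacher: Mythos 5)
Your proposal is correct and follows essentially the same argument as the paper: both rely on the observation that the closed walk $\Tpi(1)\to\cdots\to\Tpi(2k)\to\Tpi(1)$ on the tree $G_{\gamma\pi}$ traverses each of its $k$ edges exactly twice, so the product of half-kernels $L_\pi$ collapses to the product of $\kappa_\sigma$ over edges, and then Fubini/Tonelli against $\mu_{W,m}^{\otimes(k+1)}$ gives the identification. The paper simply runs the computation in the reverse direction, unfolding $\int H_\pi\,\mu_{W,m}$ into $\ep[\prod_{(u,v)}\kappa_\sigma(W_u^m,W_v^m)]$ rather than the other way around.
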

\begin{proof}[Proof of Lemma \ref{lemma:moments_Hpi}]
    We begin by evaluating the integral on the right-hand side. We have 
    \begin{align*}
        \int_1^{\infty} H_\pi(y)\mu_{W,m}(\dd{y}) 
        &= \int_1^{\infty}\int_{[1,\infty)^k}L_\pi(y,x_2,\ldots,x_{k+1})\mu_{W,m}^{\otimes k}(\dd{\mathbf{x}'})\mu_{W,m}(\dd{y})\\
        &= \int_{[1,\infty)^{k+1}}\kappa_{\sigma}^{1/2}(x_{\Tpi(1)},x_{\Tpi(2)})\cdots\kappa_{\sigma}^{1/2}(x_{\Tpi(2k)},x_{\Tpi(1)})\mu_{W,m}^{\otimes k+1}(\dd{\mathbf{x}}).
    \end{align*}
    We know that, for $\pi\in NC_2(2k)$, $\#\gamma\pi=k+1$ and so the graph $G_{\gamma\pi}$ has $k+1$ vertices. Furthermore, when we perform a closed walk of the form $1\to2\to\ldots\to2k\to1$ on the (unoriented) graph $G_{\gamma\pi}$, we traverse each edge \emph{exactly} twice. In particular, the product $\kappa_{\sigma}^{1/2}(x_{\Tpi(1)},x_{\Tpi(2)})\cdots\kappa_{\sigma}^{1/2}(x_{\Tpi(2k)},x_{\Tpi(1)})$ has $2k$ terms with $k$ matchings, and so
    \[
    \kappa_{\sigma}^{1/2}(x_{\Tpi(1)},x_{\Tpi(2)})\cdots\kappa_{\sigma}^{1/2}(x_{\Tpi(2k)},x_{\Tpi(1)}) = \prod_{(u,v)\in E(G_{\gamma\pi})} \kappa_{\sigma}(x_u,x_v)\,.
    \]
    We then have that 
    \begin{align*}
    \int_1^{\infty} H_\pi(y)\mu_{W,m}(\dd{y}) &= \int_{[1,\infty)^{k+1}}\prod_{(u,v)\in E(G_{\gamma\pi})} \kappa_{\sigma}(x_u,x_v) \mu_{W,m}^{\otimes k+1}(\dd{\mathbf{x}}) \\
    &= \E\Bigg[\prod_{(u,v)\in E(G_{\gamma\pi})} \kappa_{\sigma}(W_u^m,W_v^m)  \Bigg]\,,
    \end{align*}
    which concludes the proof.
\end{proof}
We  show now some properties of $H_\pi$ that will help us in the upcoming computations. 
\begin{lemma}\label{lemma:Hproperties}
    Let $k\geq1$ and let $H_\pi$ be as defined in \eqref{eq:Hmapping}. Let $\pi\in NC_2(2k)$. Then, 
    \begin{enumerate}[label=(\arabic*),ref=(\arabic*)]
\item\label{item:H_(1,2k)prop} If $\pi=(1,2k)\cup \pi_1$, where $\pi_1$ is a non-crossing pair partition of $\{2,\ldots,2k-1\}$, then, 
\begin{align}\label{eq:H_property1}
    H_\pi(y) = \int_1^{\infty}H_{\pi_1}(x)\kappa_\sigma(x,y)\mu_{W,m}(\dd{x}).
\end{align}
\item\label{item:H_pi1pi2prop}
If $\pi = \pi_1\cup\pi_2$, then $H_\pi(\cdot) = H_{\pi_1}(\cdot)H_{\pi_2}(\cdot)$. 
\end{enumerate}
\end{lemma}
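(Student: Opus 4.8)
The plan is to reduce everything to the rooted-tree picture. The first observation is that the computation in the proof of Lemma~\ref{lemma:moments_Hpi} shows that $L_\pi$ of~\eqref{eq:Lmapping} is exactly the edge product $L_\pi(\mathbf x)=\prod_{(u,v)\in E(G_{\gamma\pi})}\kappa_\sigma(x_u,x_v)$, once we index the coordinates of $\mathbf x\in[1,\infty)^{k+1}$ by the vertices $V_1,\dots,V_{k+1}$ of the tree $G_{\gamma\pi}$, with $V_1$ the root. Hence, writing $\rho$ for the root,
\[
H_\pi(y)=\int_{[1,\infty)^{k}}\ \prod_{(u,v)\in E(G_{\gamma\pi})}\kappa_\sigma(x_u,x_v)\ \Big|_{x_\rho=y}\ \prod_{v\neq\rho}\mu_{W,m}(\dd x_v),
\]
so $H_\pi$ depends on $\pi$ only through the rooted tree $G_{\gamma\pi}$. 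Both identities will then follow once $G_{\gamma\pi}$ is identified in terms of $G_{\gamma\pi_1}$ (and $G_{\gamma\pi_2}$), by Fubini together with the symmetry $\kappa_\sigma(x,y)=\kappa_\sigma(y,x)$.

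For part~\ref{item:H_(1,2k)prop}, with $\pi=(1,2k)\cup\pi_1$ and $\pi_1$ a non-crossing pair partition of $\{2,\dots,2k-1\}$, I would compute $\gamma\pi$ on the relevant elements: $(\gamma\pi)(1)=\gamma(\pi(1))=\gamma(2k)=1$, and the only preimage of $1$ is $1$ itself, so $\{1\}$ is a singleton block of $\gamma\pi$, necessarily the root; and $(\gamma\pi)(2k)=\gamma(\pi(2k))=\gamma(1)=2$, so $2k$ lies in the same block as $2$. A short check shows that on $\{2,\dots,2k-1\}$ the permutation $\gamma\pi$ agrees with the analogous product built from $\pi_1$ (cyclic shift $(2,3,\dots,2k-1)$) except that the arrow issued by $\pi_1(2k-1)$ is rerouted through $2k$; hence $\gamma\pi$ induces the same partition of $\{2,\dots,2k-1\}$ as that product, with $2k$ adjoined to the block of $2$, i.e.\ to the root of $G_{\gamma\pi_1}$. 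Therefore $G_{\gamma\pi}$ is $G_{\gamma\pi_1}$ with one extra pendant edge joining its root $\rho_1$ to the new root $\{1\}$, so $E(G_{\gamma\pi})=E(G_{\gamma\pi_1})\sqcup\{(\{1\},\rho_1)\}$ and the non-root vertices of $G_{\gamma\pi}$ are precisely those of $G_{\gamma\pi_1}$. Substituting into the displayed formula for $H_\pi$, factoring out $\kappa_\sigma(y,x_{\rho_1})$ and integrating the remaining edge product over the non-root coordinates, one gets $H_\pi(y)=\int_1^\infty \kappa_\sigma(x,y)H_{\pi_1}(x)\,\mu_{W,m}(\dd x)$, which is~\eqref{eq:H_property1}. (For $k=1$, $\pi_1=\emptyset$, $G_{\gamma\pi_1}$ is a single vertex and $H_{\pi_1}\equiv 1$, recovering the base case.)

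For part~\ref{item:H_pi1pi2prop}, with $\pi=\pi_1\cup\pi_2$ where $\pi_1$ acts on $\{1,\dots,2l\}$ and $\pi_2$ on $\{2l+1,\dots,2k\}$, I would use the factorisation $\gamma=\gamma_1\gamma_2\,(2l\ 2k)$ with $\gamma_1=(1,\dots,2l)$ and $\gamma_2=(2l+1,\dots,2k)$, so $\gamma\pi=\gamma_1\gamma_2(2l\ 2k)\pi_1\pi_2$. Evaluating this on each element shows $\gamma\pi$ coincides with $\gamma_1\pi_1$ on $\{1,\dots,2l\}$ except that the image of $\pi_1(2l)$ is moved from $1$ to $2l+1$, and with $\gamma_2\pi_2$ on $\{2l+1,\dots,2k\}$ except that the image of $\pi_2(2k)$ is moved from $2l+1$ to $1$; hence the cycle of $1$ under $\gamma\pi$ is the concatenation of the cycle of $1$ under $\gamma_1\pi_1$ with the cycle of $2l+1$ under $\gamma_2\pi_2$, all other cycles being unchanged. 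In tree terms, $G_{\gamma\pi}$ is $G_{\gamma\pi_1}$ and $G_{\gamma\pi_2}$ glued by identifying their roots, so $E(G_{\gamma\pi})=E(G_{\gamma\pi_1})\sqcup E(G_{\gamma\pi_2})$ and the only vertex the two subtrees share is the common root $\rho$. Since the edge product over $E(G_{\gamma\pi_1})$ involves only $x_\rho$ and the non-root coordinates of $G_{\gamma\pi_1}$, and similarly for $\pi_2$, and these two sets of non-root coordinates are disjoint, Fubini factorises the integral and yields $H_\pi(\cdot)=H_{\pi_1}(\cdot)H_{\pi_2}(\cdot)$.

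I expect the only genuine obstacle to be the two structural descriptions of $G_{\gamma\pi}$: composing the full cycle $\gamma$ with $\pi$ reshuffles cycles, and one must verify that nothing beyond the stated attachment (part~1) or root-merging (part~2) occurs; these are exactly the tree recursions underlying the bijection between $NC_2(2k)$ and planar rooted trees, cf.\ \citet[Chapter 9]{Nica:Speicher}. Everything after that is Fubini and the symmetry of $\kappa_\sigma$.
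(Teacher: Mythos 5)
Your proposal is correct, and the overall architecture --- identify the rooted-tree structure of $G_{\gamma\pi}$, then factor the integral by Fubini and the symmetry of $\kappa_\sigma$ --- matches the paper. The genuine difference is in how the tree structure is established for part~\ref{item:H_pi1pi2prop}. The paper proves a separate Claim describing how $\gamma\pi$ distributes $\{2,\dots,2r\}$ and $\{2r+1,\dots,2k\}$ over the blocks $V_1,\dots,V_{k+1}$, and the proof of that Claim is a case analysis based on the parity of elements in the blocks of $\gamma\pi$ (citing two ``Properties'' from \cite{avena}), while your route goes through the permutation identity $\gamma=\gamma_1\gamma_2\,(2l\ 2k)$ and then tracks how the transposition $(2l\ 2k)$ reroutes exactly the two cycles of $\gamma_1\pi_1$ and $\gamma_2\pi_2$ through $1$ and $2l+1$, splicing them into a single cycle and leaving all others untouched. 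Your factorization argument is more systematic and avoids the parity bookkeeping; its cost is that you have to verify the identity $\gamma=\gamma_1\gamma_2\,(2l\ 2k)$ and the ``only two cycles change'' assertion, both of which you state but compress. For part~\ref{item:H_(1,2k)prop} the two proofs are essentially the same computation: the paper expands $L_\pi$ directly using $\Tpi(1)=1$, $\Tpi(2)=\Tpi(2k)=2$ and peels off $\kappa_\sigma(y,x_2)$, whereas you phrase the same fact as ``$G_{\gamma\pi}$ is $G_{\gamma\pi_1}$ with one pendant edge from its root to the new singleton root $\{1\}$.'' Either way the Fubini step and the final identities go through.
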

\begin{proof}[Proof of Lemma \ref{lemma:Hproperties}]
    We first prove property \ref{item:H_(1,2k)prop}. Let $\pi=(1,2k)\cup\pi_1$. Then, $\gamma\pi = \{(
    1),V_2,\ldots,V_{k+1}\}$. We know that $2\in V_2$ and then $\gamma\pi(2k)=2\in V_2$.  
    Now, fix $x_1=y$. Then 
    \begin{align*}
        H_\pi(y) &= \int_{[1,\infty)^k}L_\pi(y,x_2,\ldots,x_{k+1})\mu_{W,m}^{\otimes k}(\dd{\mathbf{x}')}\\
        &=\int_{[1,\infty)^k}\kappa_{\sigma}^{1/2}(y,x_2)\kappa_{\sigma}^{1/2}(x_2,x_{\Tpi(3)})\ldots\kappa_{\sigma}^{1/2}(x_{\Tpi(2k-1)},x_2)\kappa_{\sigma}^{1/2}(x_2,y)\mu_{W,m}^{\otimes k}(\dd{\mathbf{x}')}\\
        % &=\int_1^{\infty}\kappa_\sigma(y,x_2)\int_{[1,\infty)^k}\kappa_{\sigma}^{1/2}(x_2,x_{\Tpi(3)})\ldots\kappa_{\sigma}^{1/2}(x_{\Tpi(2k-1)},x_2)\mu_{W,m}^{\otimes k}(\dd{\mathbf{x}')}\\
        &= \int_1^{\infty}\kappa_\sigma(y,x_2)\int_{[1,\infty)^{k-1}}\kappa_{\sigma}^{1/2}(x_2,x_{\Tpi(3)})\ldots\kappa_{\sigma}^{1/2}(x_{\Tpi(2k-1)},x_2)\mu_{W,m}^{\otimes k-1}(\dd (x_3,\dots,x_{k+1}))\mu_{W,m}(\dd x_2)
        \\
        &= \int_1^{\infty}\kappa_\sigma(y,x_2)H_{\pi_1}(x_2)\mu_{W,m}(\dd{x_2}),
    \end{align*}
    which is what we desired. 
    
    For property \ref{item:H_pi1pi2prop}, let $\pi=\pi_1\cup\pi_2$, with $\pi_1\in NC_2(\{1,2,\ldots,2r\})$ and $\pi_2\in NC_2(\{2r+1,\ldots,2k\})$ and let us consider the function $H_{\pi}(y)$ with $y= x_{1}= x_{\Tpi(1)}$. Then, 
    \begin{align*}
        H_\pi(y) = \int_{[1,\infty)^k}\kappa_\sigma^{1/2}(y,x_{\Tpi(2)})\ldots \kappa_\sigma^{1/2}(x_{\Tpi(2r)},x_{\Tpi(2r+1)})\ldots \kappa_\sigma^{1/2}(x_{\Tpi(2k)},\, y)\mu_{W,m}^{\otimes k}(\dd{\mathbf{x}}').
    \end{align*}
  We now claim that this integral can be split up into two integrals. First, consider the element $x_{\Tpi(1)}$. Since we assume that `1' maps to $V_1\in\gamma\pi$, all elements of $V_1$ are mapped to $y$. 
     To understand where other elements are mapped, we will state a claim and see its consequences to this proof, and then prove it on~\pageref{proof:gammaupi}.
     \begin{claim}\label{claim:gammapi on pi1Upi2}
       Under $\gamma\pi$, the elements $\{2,\ldots,2r\}$ are mapped to the blocks $V_1\cup\{V_2,\ldots,V_{r'}\}\subset \gamma\pi$, and the elements $\{2r+1,\ldots,2k\}$ are mapped to the blocks $V_1\cup \{V_{r'+1},\ldots,V_{k+1}\}\subset\gamma\pi$, where $r'<k+1$ is some index. In particular $\gamma\pi(2r+1)\in V_1$.  
     \end{claim}

    From this Claim we have that 
   \begin{align*} 
       H_\pi(y)&=\int_{[1,\infty)^k}\kappa_\sigma^{1/2}(y,x_{\Tpi(2)})\ldots \kappa_\sigma^{1/2}(x_{\Tpi(2r)},x_{\Tpi(2r+1)})\ldots \kappa_\sigma^{1/2}(x_{\Tpi(2k)},y)\mu_{W,m}^{\otimes k}(\dd{\mathbf{x}}')\\
       &=\int_{[1,\infty)^k}\kappa_\sigma^{1/2}(y,x_{\Tpi(2)})\ldots \kappa_\sigma^{1/2}(x_{\Tpi(2r)},y)\ldots \kappa_\sigma^{1/2}(x_{\Tpi(2k)},y)\mu_{W,m}^{\otimes k}(\dd{\mathbf{x}}')\\
       &= \int_{[1,\infty)^{r'}}\kappa_\sigma^{1/2}(y,x_{\Tpi(2)})\ldots \kappa_\sigma^{1/2}(x_{\Tpi(2r)},y)\mu_{W,m}^{\otimes r'}(\dd{\mathbf{x}^{(r')}})\\
       &\times\int_{[1,\infty)^{k-r'}}\kappa_\sigma^{1/2}(y,x_{\Tpi(2r+2)})\ldots \kappa_\sigma^{1/2}(x_{\Tpi(2k)},y)\mu_{W,m}^{\otimes (k-r')}(\dd{\mathbf{x}^{(k-r')}})\\
       &= H_{\pi_1}(y)H_{\pi_2}(y).
   \end{align*}
   This concludes the proof.
\end{proof}

\begin{proof}[Proof of Claim \ref{claim:gammapi on pi1Upi2}]\label{proof:gammaupi}
   Let $\gamma_1$ resp. $\gamma_2$ be the shift by one on $[2r]$ resp. $\{2r+1,\, \ldots, \,2k\}$. To prove this claim, it suffices to analyse the special indices $\{1,2r,2r+1,2k\}$, since $\gamma_1$ and $\gamma_2$ are cyclic permutations on $[2r]$ and $\{2r+1,\, \ldots, \,2k\}$, respectively. We will be using the fact that all elements in a block of $\gamma\pi$ must be either all odd or all even \citep[Property 1]{avena}, and that any pairing in $\pi$ must have one element odd and the other even \citep[Property 2]{avena}. \begin{enumerate}
        \item We already have $1\in V_1$. Now, let $(o_1,2k)\in\pi_2$, for some $o_1$ such that $o_1\geq 2r+1$. Then, $o_1$ must be odd. Now, $o_1+1$ is even, and cannot belong to $V_1$. Thus $\gamma\pi(2k)=o_1+1\in\{V_{r'+1},\ldots,V_{2k}\}$. This takes care of the index $2k$.
        \item Let us continue with $(o_2,2r)\in\pi_1$ for some $o_2$. We know that $o_2$ must be odd. Thus, $\gamma\pi(2r)=o_2+1\in\{V_2,\ldots,V_{r'}\} =:\gamma_1\pi_1\setminus V_1$. This resolves the case of $2r$. 
        \item Lastly, by construction, $\gamma\pi(o_2)=2r+1$, which brings us to the last special element. Since $o_2$ and $2r+1$ belong to the same block in $\gamma\pi$, it suffices to show that this block is $V_1$, that is, the block to which element 1 belongs. Now, if $(1,o_2-1)\in\pi_1$, we are done, since $\gamma\pi(1)=o_2$. Suppose not, and let $(1,e_1)\in\pi_1$ for some even integer $e_1$. Similarly as before, if now $(e_1+1,o_2-1)\in\pi_1$, we are done. Since $\pi_1$ and $\pi_2$ act on the first $2r$ elements and the remaining $2k-2r$ elements respectively, then, by the non-crossing nature, there is a sequence of even integers $\{e_i\}_{i=1}^t$ such that $(1,e_1),(e_1+1,e_2),\ldots,(e_t+1,o_2-1)\in\pi_1$. Computing $\gamma\pi$ recursively gives us that $\gamma\pi(1)=o_2$, and so $\gamma\pi(2r+1)\in V_1$. 
        \end{enumerate}
        This proves the claim. 
\end{proof}

We are now ready to prove Proposition \ref{Stieltjes:finitem}. \begin{proof}[Proof of Proposition~\ref{Stieltjes:finitem}]
We now derive the Stieltjes transform of the measure $\mu_{\sigma,\tau,m}$. Using \eqref{eq:Stieltjesmomentsrelation} and Proposition \ref{prop:GaussianESD}, we have that 
\[
\mathrm{S}_{\mu_{\sigma,\tau,m}}(z) = -\sum_{k\geq0}\frac{M_{2k}}{z^{2k+1}}.
\]
Using Lemma \ref{lemma:moments_Hpi} we substitute the expression for $M_{2k}$. We have
\begin{align}
\mathrm{S}_{\mu_{\sigma,\tau,m}}(z) &= -\sum_{k\geq0}\frac{1}{z^{2k+1}}\int_1^{\infty}\sum_{\pi\in NC_2(2k)} H_\pi(x)\mu_{W,m}(\dd{x}) \nonumber\\
&=-\int_1^\infty \sum_{k\geq0}\sum_{\pi\in NC_2(2k)}\frac{H_\pi(x)}{z^{2k+1}}\mu_{W,m}(\dd{x}) \label{eq:st100},
\end{align}
where we could interchange the integral and the sum by  Fubini's theorem. Now, we define the function $a(z,x)$ as 
\begin{equation}\label{eq:a(z,x)}
    a(z,x):= -\sum_{k\geq0}\sum_{\pi\in NC_2(2k)}\frac{H_\pi(x)}{z^{2k+1}}.
\end{equation}
Then using \eqref{eq:st100} we have \[\mathrm{S}_{\mu_{\sigma,\tau,m}}(z) = \int_1^\infty a(z,x) \mu_{W,m}(\dd{x}).\]
We now state some properties of the function $a(z,x)$.  Firstly, for any $z\in\C^+$ the map $x\mapsto a(z,x)$ is in $L^{\infty}([1,\infty),\mu_{W,m})$ as $H_\pi$ is bounded .  Secondly, for any $x\in[1,\infty)$, the map $z\mapsto a(z,x)$ is analytic in $\C$, which follows from the Laurent series expansion. Finally we see that $a(z,x)$ lies in $\C^+$, for any $z\in\C^+$ and $x>1$.  Indeed, for any $\Im(z)>0$, the expansion on the right-hand side of \eqref{eq:a(z,x)} will always have a non-trivial imaginary part. Thus, since $a(\cdot,\cdot)$ is analytic, it will either lie completely in $\C^-$ or $\C^+$, since  it can never take values in $\Rr$. However, $\mathrm{S}_{\mu_{W,m}}(z) \in \C^+$, and thus, $a(z,x)\in\C^+$ for any $z\in\C^+$ and $x>1$.

To write down a functional recursion for $a(\cdot,\cdot)$ it is convenient to use the notion of words. Any partition $\pi$ can be associated to a \emph{word} $w$, with any elements in $i,j\in[2k]$ being associated with the same letter in $w$ if $i,j$ are in the same block of $\pi$. For example, $\pi=\{\{1,2\}, \{3,4\}\}$ can be written as $w=aabb$. In particular, any partition $\pi\in NC_2(2k)$ can be associated to a word $w$ of the form $w=aw_1aw_2$, where $w_1, w_2$ are words that can be empty. For any word $w$ associated to a partition $\pi$, let $H_\pi=H_w$. Furthermore, for $w\in NC_2(2k)$ we mean a word $w$ whose associated partition $\pi$ is in $NC_2(2k)$. Then we have, using Lemma \ref{lemma:Hproperties} in the third equality, 
\begin{align}
    a(z,x) &= -\sum_{k\ge 0}\sum_{w\in NC_2(2k)} \frac{H_w(x)}{z^{2k+1}} \nonumber \\
    &= -\frac{1}{z} -\sum_{k\geq 1}\sum_{\substack{w\in NC_2(2k)\\ w = aw_1aw_2}} \frac{H_{aw_1aw_2}(x)}{z^{2k+1}} \nonumber\\
    &= -\frac{1}{z} - \sum_{k\geq 1}\sum_{\substack{w\in NC_2(2k)\\ w = aw_1aw_2}}  \frac{H_{aw_1a}(x)H_{w_2}(x)}{z^{2k+1}} \nonumber\\
    &=-\frac{1}{z} -\frac{1}{z}\sum_{k\geq 1}\sum_{\ell=1}^k \sum_{w_1\in NC_2(2\ell-2)}\frac{H_{aw_1a}(x)}{z^{2\ell-2+1}}\sum_{w_2\in NC_2(2k-2\ell)} \frac{H_{w_2}(x)}{z^{2k-2\ell +1}}.
\end{align}
One can see that the word $aw_1a$ has as corresponding partition $(1,2\ell)\cup\pi_1$, with $\pi_1\in NC_2(2\ell-2)$. Using \eqref{eq:H_property1} from Lemma \ref{lemma:Hproperties}, we have 
\begin{align*}
    a(z,x) &= -\frac{1}{z} -\frac{1}{z}\sum_{k\geq1}\sum_{\ell=1}^k\sum_{w_1\in NC_2(2\ell - 2)}\frac{1}{z^{2\ell-1}}\int_1^{\infty}H_{w_1}(y)\kappa_{\sigma}(x,y)\mu_{W,m}(\dd{y})\sum_{w_2\in NC_2(2k-2\ell)} \frac{H_{w_2}(x)}{z^{2k-2\ell +1}} \nonumber \\
    &=-\frac{1}{z}-\frac{1}{z}\sum_{k\geq1}\sum_{\ell=1}^k\sum_{\pi_2\in NC_2(2k-2\ell)}\frac{H_{\pi_2}(x)}{z^{2k-2\ell +1}}\sum_{\pi_1\in NC_2(2\ell-2)}\frac{1}{z^{2\ell-1}}\int_1^{\infty}H_{\pi_1}(y)\kappa_{\sigma}(x,y)\mu_{W,m}(\dd{y}) \nonumber\\
    &= -\frac{1}{z} - \frac{a(z,x)}{z}\int_1^{\infty}a(z,y)\kappa_\sigma(x,y)\mu_{W,m}(\dd{y}).
\end{align*}
Thus, we have \eqref{eq: a(z,x) expansion}, which completes the proof of Proposition \ref{Stieltjes:finitem}.
\end{proof}

\begin{remark}
Equation \eqref{eq: a(z,x) expansion} gives an analytic description of $a$ in terms of the recursive equation. Now, for any $z\in \C^+$, we have that 
\begin{equation}\label{eq:complex identity}
z = \ota\int_0^{\infty} \e^{-\ota tz^{-1}}\dd{t}.
\end{equation}
Since $a(z,x)\in \C^+$ for any fixed $x\in[1,\infty)$, applying \eqref{eq:complex identity} to $a(z,x)$ and using \eqref{eq: a(z,x) expansion} gives us that 
\begin{align}\label{eq:a(z,x) complex identity}
    a(z,x) = \ota\int_0^{\infty}\e^{\ota tz}\exp\left\{ \ota t\int_1^{\infty}a(z,y)\kappa_\sigma(x,y)\mu_{W,m}(\dd{y})\right\}\dd{t}.
\end{align}
An immediate consequence of \eqref{eq:a(z,x) complex identity} is that $a(z,x)$ is uniformly bounded in $x$ and $m$. Indeed, if we take $z=\xi + \ota\eta$ with $\eta>0$, we have that 
\begin{align}\label{eq:a(z,x) bound}
    |a(z,x)| &\leq \int_0^{\infty} \e^{-\eta t} \left| \exp\left\{ \ota t\int_1^{\infty}a(z,y)\kappa_\sigma(x,y)\mu_{W,m}(\dd{y})\right\}  \right| \dd{t}\nonumber \\
    &\leq \int_0^{\infty} \e^{-\eta t}\dd{t} = \frac{1}{\eta}.
\end{align}
The bound in the second line holds since $a(z,x)\in\C^+$, and so $\int_1^{\infty}a(z,y)\kappa_\sigma(x,y)\mu_{W,m}(\dd{y}) \in \C^+$ as $\kappa_\sigma\geq 1$. 

\end{remark}

\subsection{Limiting Stieltjes transform} We now set up the framework required to prove Theorem \ref{theorem:stieltjesfinal}. For the remainder of this section, denote $a_z(x):=a(z,x)$, which implicitly depends on $m$. We wish to extend Proposition \ref{Stieltjes:finitem} to the measure $\mu_{\sigma,\tau}$ by passing to the limit $m\to\infty$. 
We have a natural candidate for the function $a^*$ in Theorem \ref{theorem:stieltjesfinal}, which should be the limit of $a(\cdot,\cdot)$ as $m$ tends to infinity. We now formalise this idea through a series of lemmas.  

Since our goal now is to show Theorem~\ref{theorem:stieltjesfinal} we are going to work for the remainder of this section with the following parameters:
\begin{enumerate}
    \item $\tau>3$, 
    \item $\sigma<\tau -2$, and 
    \item a parameter $\beta$ such that $2\vee1+\sigma<\beta<\tau -1$.
\end{enumerate}
Let $\overline{\C}^+ = \C^+\cup \Rr$ be the closure of $\C^+$, and let $\nu$ be the measure defined as 
\begin{equation}\label{eq:nu(dx)}
\nu(\dd{x}) = x^{-\beta}\dd{x}.
\end{equation} 
Consider the space $L^1([1,\infty),\nu)$ of all functions $f:[1,\infty)\to \overline{\C}^+$ that are $L^1-$integrable with respect to $\nu$.
\begin{definition}\label{def:Banach space}
Let $\mathcal{B}$ denote the Banach space $\mathcal{B} := (L^1([1,\infty),\nu), \|\cdot\|_1)$, where the norm $\|\cdot\|_1$ is the $L^1$ norm with respect to $\nu$ as in \eqref{eq:nu(dx)}, which is defined for $f\in L^1([1,\infty),\nu)$ as
\begin{equation}\label{eq:norm}
\|f\|_1 := \int_1^{\infty} |f(x)|x^{-\beta}\dd{x}. 
\end{equation} 
\end{definition}
Recall that $\mu_{W,m}$ denotes the law of the truncated weights $(W_x^m)_x$, given as
\begin{align*}
    \mu_{W,m}(\cdot) = c_m^{-1}\mu_w(\cdot)\mathbbm{1}_{\{\cdot \leq m\}},
\end{align*}
where $c_m=1-m^{-(\tau-1)}$ is a normalizing constant converging to $1$ as $m$ tends to infinity, and $\mu_W$ is the Pareto law defined in~\eqref{eq:paretolaw}. For $z\in \C^+$, let $T_z$ denote the map \begin{align}\label{eq:Tz map}
    T_zf(\cdot) = \ota\int_0^{\infty}\e^{\ota tz}\exp\left\{ \ota t\int_1^{\infty} f(y)\kappa_\sigma(\cdot,\,y)\mu_W(\dd{y})   \right\}\dd{t}.
\end{align}
Then, we have the following result. 
\begin{lemma}\label{lemma:Tz contraction}
There exists a constant $\tilde c=\tilde c(\tau, \sigma, \beta)$ such that, for all $z\in \C^+$ with $\Im(z)^2=\eta^2>\tilde c$   $T_z:\mathcal{B}\to\mathcal{B}$ is a contraction mapping, with a contraction constant $\tilde{c}\eta^{-2}$.
\end{lemma}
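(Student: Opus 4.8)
The plan is to first verify that $T_z$ is a well-defined self-map of $\mathcal{B}$ for every $z\in\C^+$, and then to obtain the contraction bound from a pointwise Lipschitz estimate followed by one explicit kernel integral. For the first part, fix $z=\xi+\ota\eta\in\C^+$, $f\in\mathcal{B}$, and set $F_f(x):=\int_1^\infty f(y)\kappa_\sigma(x,y)\mu_W(\dd y)$ for $x\ge 1$. Writing $\mu_W(\dd y)=(\tau-1)y^{-\tau}\dd y$ and splitting at $y=x$: on $\{1\le y\le x\}$ one has $\kappa_\sigma(x,y)y^{-\tau}=xy^{\sigma-\tau}\le x$, and on $\{y\ge x\}$ one has $\kappa_\sigma(x,y)y^{-\tau}=x^\sigma y^{1-\tau}\le x^\sigma y^{-\beta}$ because $\beta<\tau-1$; combining these with $\int_1^x|f(y)|\dd y\le x^\beta\|f\|_1$ yields $|F_f(x)|\le(\tau-1)\|f\|_1(x^{1+\beta}+x^\sigma)<\infty$ for each $x$. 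Joint measurability of $(x,y)\mapsto f(y)\kappa_\sigma(x,y)$ makes $x\mapsto F_f(x)$ measurable, and $\Im f\ge 0$ $\nu$-almost everywhere together with $\kappa_\sigma\ge 0$ forces $F_f(x)\in\overline{\C}^+$. Carrying out the elementary $t$-integral in~\eqref{eq:Tz map} then gives $T_zf(x)=-(z+F_f(x))^{-1}\in\C^+$ with $|T_zf(x)|\le\eta^{-1}$, and since $\nu([1,\infty))=(\beta-1)^{-1}<\infty$ this bounded measurable function lies in $\mathcal{B}$.

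For the contraction, let $f,g\in\mathcal{B}$. Because $\Re(\ota t F_f(x))=-t\,\Im F_f(x)\le 0$ and likewise with $g$, the elementary inequality $|e^a-e^b|\le|a-b|$ (valid whenever $\Re a,\Re b\le 0$) gives $|e^{\ota t F_f(x)}-e^{\ota t F_g(x)}|\le t\,|F_f(x)-F_g(x)|$; together with $|e^{\ota tz}|=e^{-t\eta}$ and $\int_0^\infty te^{-t\eta}\dd t=\eta^{-2}$ this produces the pointwise bound
\[
|T_zf(x)-T_zg(x)|\le\eta^{-2}\,|F_f(x)-F_g(x)|\le\eta^{-2}\int_1^\infty|f(y)-g(y)|\,\kappa_\sigma(x,y)\,\mu_W(\dd y).
\]
Integrating in $x$ against $\nu(\dd x)=x^{-\beta}\dd x$, applying Tonelli's theorem, and using $\mu_W(\dd y)=(\tau-1)y^{-\tau}\dd y$, one is reduced to proving $\Psi(y)\le\tilde c\,y^{-\beta}$ for all $y\ge 1$, where
\[
\Psi(y):=(\tau-1)\,y^{-\tau}\int_1^\infty\kappa_\sigma(x,y)\,x^{-\beta}\,\dd x .
\]

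The last step is the crux. Splitting the $x$-integral at $x=y$, one uses $\int_1^y yx^{\sigma-\beta}\dd x\le y/(\beta-\sigma-1)$ (this is where $\beta>1+\sigma$ enters, keeping the exponent below $-1$) and $\int_y^\infty y^\sigma x^{1-\beta}\dd x=y^{\sigma+2-\beta}/(\beta-2)$ (this needs $\beta>2$), so that
\[
\Psi(y)\,y^\beta\le(\tau-1)\Big(\frac{y^{\beta+1-\tau}}{\beta-\sigma-1}+\frac{y^{\sigma+2-\tau}}{\beta-2}\Big)\le(\tau-1)\Big(\frac{1}{\beta-\sigma-1}+\frac{1}{\beta-2}\Big)=:\tilde c ,
\]
the final bound because $\beta<\tau-1$ and $\sigma<\tau-2$ make both exponents of $y$ strictly negative, so the supremum over $y\ge 1$ is attained at $y=1$. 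Hence $\|T_zf-T_zg\|_1\le\tilde c\,\eta^{-2}\|f-g\|_1$, which is a genuine contraction precisely when $\eta^2>\tilde c$. The delicate point — and the reason one introduces the auxiliary exponent $\beta$ with $2\vee(1+\sigma)<\beta<\tau-1$, which in turn forces $\tau>3$ and $\sigma<\tau-2$ — is exactly this kernel estimate: $\beta$ must be large enough ($\beta>2$ and $\beta>1+\sigma$) for $\int_1^\infty\kappa_\sigma(x,y)x^{-\beta}\dd x$ to converge at infinity and to be controlled uniformly in $y$, yet small enough ($\beta<\tau-1$) that, once the Pareto weight $y^{-\tau}$ is restored, the residual powers of $y$ decay rather than grow; the condition $\sigma<\tau-2$ plays the same role for the second term. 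Everything else is bookkeeping.
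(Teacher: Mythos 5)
Your proof is correct and follows essentially the same route as the paper's: the same elementary Lipschitz bound for the complex exponential (your $|e^a-e^b|\le|a-b|$ for $\Re a,\Re b\le 0$ is the paper's $|e^{\ota tz_1}-e^{\ota tz_2}|\le t|z_1-z_2|$ for $z_1,z_2\in\overline{\C}^+$), the same split of the kernel at $x=y$, the same Tonelli swap, and the same role for the constraints $\beta>2$, $\beta>1+\sigma$, $\beta<\tau-1$, $\sigma<\tau-2$, arriving at the identical constant $\tilde c=(\tau-1)\bigl(\tfrac{1}{\beta-2}+\tfrac{1}{\beta-1-\sigma}\bigr)$. Your explicit finiteness check of $F_f(x)$ before carrying out the $t$-integral is slightly more careful than the paper, which simply invokes $\Im F_f\ge 0$ to bound the integrand, but this is a matter of exposition rather than a different argument.
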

\begin{proof}[Proof of Lemma \ref{lemma:Tz contraction}]
    We first need to show that, for any $f\in\Ba$, one has $T_zf\in\Ba$. Indeed, for $x\geq 1$ it holds that \begin{align*}
        \big|T_zf(x)\big| &
        \leq \int_0^{\infty}\e^{-\eta t}\left|\exp\left\{ \ota t\int_1^{\infty} f(y)\kappa_\sigma(x,y)\mu_W(\dd{y})   \right\} \right|\dd{t} \leq \frac{1}{\eta},
    \end{align*}
    where the last inequality holds as $f(y)\in\overline{\C^+}$ for any $y\geq 1$, and thus the second complex exponential is bounded by 1. Since $|T_zf(\cdot)|$ is uniformly bounded, it is $L^1-$integrable with respect to $\nu$, and so $T_z(\Ba)\subseteq \Ba$. 
    
    Now, we wish to show $T_z$ is a contraction. Let us take $f_1,f_2\in \Ba$. 
    Recall that for any $z_1,z_2\in\overline{\C^+}$ and $t>0$, we have 
    \begin{equation}\label{eq:complex difference identity}
        |\e^{\ota tz_1} - \e^{\ota tz_2}| \leq t|z_1-z_2|.
        \end{equation} Then, for any $x\in [1,\infty)$ we have  that 
    \begin{align}\label{eq:contraction step 1}
        |T_zf_1(x) - T_zf_2(x)| &= \left|  \ota\int_0^{\infty}\e^{\ota tz}\left(\e^{ \ota t\int_1^{\infty} f_1(y)\kappa_\sigma(x,y)\mu_W(\dd{y}) } - \e^{ \ota t\int_1^{\infty} f_2(y)\kappa_\sigma(x,y)\mu_W(\dd{y})   }\right)\dd{t}    \right| \nonumber \\
        &\leq \int_0^{\infty}\e^{-\eta t}\left|\e^{ \ota t\int_1^{\infty} f_1(y)\kappa_\sigma(x,y)\mu_W(\dd{y}) } - \e^{ \ota t\int_1^{\infty} f_2(y)\kappa_\sigma(x,y)\mu_W(\dd{y})   } \right| \dd{t} \nonumber\\
        &\leq \int_0^{\infty}\e^{-\eta t}t\left|\int_1^{\infty} \left(f_1(y)-f_2(y)\right)\kappa_\sigma(x,y)\mu_{W}(\dd{y})    \right| \dd{t},
    \end{align}
    where in \eqref{eq:contraction step 1} we use \eqref{eq:complex difference identity}. Now, evaluating the integral over $t$ in \eqref{eq:contraction step 1}, we obtain 
    \begin{equation}\label{eq:contraction step 2}
        |T_zf_1(x) - T_zf_2(x)| \leq \frac{(\tau-1)}{\eta^2}\int_1^{\infty}|f_1(y)-f_2(y)|\kappa_\sigma(x,y)y^{-\tau}\dd{y},
    \end{equation}
    where we explicitly write down the Pareto law $\mu_W(\dd{y}) := (\tau-1)y^{-\tau}\dd{y}.$ Recall that $\kappa_\sigma(x,y)=(x\wedge y)(x\vee y)^{\sigma}$. Thus, \eqref{eq:contraction step 2} becomes
    \begin{align*}
        |T_zf_1(x) - T_zf_2(x)| \leq & \frac{\tau-1}{\eta^2}\Big(\int_1^{x}|f_1(y)-f_2(y)|xy^{\sigma-\tau}\dd{y}+ \int_x^{\infty}|f_1(y)-f_2(y)|x^{\sigma}y^{1-\tau}\dd{y}\Big).
    \end{align*}
    Integrating with respect to $\nu$ gives us 
    \begin{align}\label{eq:contraction step 3}
     \|  T_zf_1 - & T_zf_2 \|_1 \nonumber\\
     \leq &\frac{\tau-1}{\eta^2}\int_1^{\infty} \left( x\int_1^x |f_1(y)-f_2(y)| y^{\sigma-\tau}\dd{y} + x^{\sigma}\int_x^{\infty}|f_1(y)-f_2(y)|y^{1-\tau}\dd{y}  \right)x^{-\beta}\dd{x} \nonumber\\
     =&\frac{\tau-1}{\eta^2}\left( \int_1^{\infty}|f_1(y)-f_2(y)|y^{\sigma-\tau}\int_y^{\infty}x^{1-\beta}\dd{x}\dd{y} \right.\nonumber
     \\&\left.+ \int_1^{\infty}|f_1(y)-f_2(y)|y^{1-\tau}\int_1^y x^{\sigma-\beta}\dd{x}\dd{y} \right)\,.
    \end{align}
Using $\beta>2$, the first integral in \eqref{eq:contraction step 3} can be bounded by
\begin{align}\label{eq:contraction step 3.1}
    \int_1^{\infty}|f_1(y)-f_2(y)|y^{\sigma-\tau}\int_y^{\infty}x^{1-\beta}\dd{x}\dd{y} 
    %&= c_1\int_1^{\infty} |f_1(y)-f_2(y)|y^{\sigma-\tau} \times \left. x^{2-\beta}\right|_{y}^{\infty} \dd{y} \nonumber\\
    &= c_1\int_1^{\infty} |f_1(y)-f_2(y)|y^{-\beta}y^{2+\sigma-\tau}\dd{y} \nonumber\\
    &\leq c_1\|f_1 - f_2\|_1\,, 
\end{align}
since $y^{2+\sigma-\tau}\leq 1$ and $c_1=1/(\beta-2)$. Similarly, the second integral in \eqref{eq:contraction step 3} gives us 
\begin{align}\label{eq:contraction step 3.2}
   \int_1^{\infty}|f_1(y)-f_2(y)|y^{1-\tau}\int_1^y x^{\sigma-\beta}\dd{x}\dd{y} %&\leq c_2\int_1^{\infty}|f_1(y)-f_2(y)|y^{1-\tau}\times \left.x^{1+\sigma-\beta}\right|_1^y\dd{y}\nonumber\\
   &\leq  c_2 \int_1^{\infty}|f_1(y)-f_2(y)|y^{1-\tau}\dd{y}\nonumber\\
   &\leq c_2\|f_1-f_2\|_1,
\end{align}
with $c_2= 1/(\beta-1-\sigma)$, where for the last line we have used $1-\tau<-\beta$. Combining \eqref{eq:contraction step 3.1} and \eqref{eq:contraction step 3.2} in \eqref{eq:contraction step 3} gives us that 
\begin{equation}\label{eq:contraction inequality}
    \|T_zf_1 - T_zf_2\|_1 \leq \frac{\tilde c}{\eta^2}\|f_1-f_2\|_1\,,
\end{equation}
where $\tilde{c}$ is a constant depending on $\tau,\,\sigma$ and $\beta$. Thus, taking $\eta>0$ to be sufficiently large such that $\eta>\sqrt{\tilde{c}}$ gives us that $T_z$ is a contraction mapping on $\Ba$, hence proving the result. 
\end{proof}
The following corollary is immediate from the Banach fixed-point theorem for contraction mappings. 
\begin{corollary}\label{corollary:Fixed point corollary}
    Let $T_z:\Ba\to\Ba$ be the contraction map given in \eqref{eq:Tz map}. Then, there exists a unique analytic function $a_z^*\in\Ba$ such that $
    T_z(a_z^*)=a_z^*.$ 
\end{corollary}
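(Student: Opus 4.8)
The plan is to prove Corollary~\ref{corollary:Fixed point corollary} by reading off existence and uniqueness from the Banach fixed-point theorem via Lemma~\ref{lemma:Tz contraction}, and then recovering the analyticity of $z\mapsto a_z^*$ through a Picard iteration. Throughout fix $z=\xi+\ota\eta$ in the open region $\Omega:=\{z\in\C^+:\eta=\Im(z)>\sqrt{\tilde c}\}$, where $\tilde c=\tilde c(\tau,\sigma,\beta)$ is the constant of Lemma~\ref{lemma:Tz contraction}; on $\Omega$ that lemma gives $T_z(\Ba)\subseteq\Ba$ and $\|T_zf_1-T_zf_2\|_1\le \tilde c\eta^{-2}\|f_1-f_2\|_1$ with $\tilde c\eta^{-2}<1$. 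Equipped with $d(f,g):=\|f-g\|_1$, the set $\Ba$ is a complete metric space: it is a closed subset of the complex Banach space $L^1([1,\infty),\nu)$, because $L^1$-convergence forces $\nu$-a.e.\ convergence along a subsequence and $\overline{\C}^+$ is closed in $\C$. Hence the Banach fixed-point theorem produces a unique $a_z^*\in\Ba$ with $T_za_z^*=a_z^*$. Carrying out the $t$-integral in \eqref{eq:Tz map} through the identity $\ota\int_0^\infty\e^{\ota tw}\dd{t}=-w^{-1}$, valid for $w\in\C^+$ and applicable here since $w=z+\int_1^\infty a_z^*(y)\kappa_\sigma(\cdot,y)\mu_W(\dd{y})\in\C^+$ (as $a_z^*$ is $\overline{\C}^+$-valued and $\kappa_\sigma\ge 1$), rewrites the fixed-point identity as
\[
a_z^*(x)\Big(z+\int_1^\infty a_z^*(y)\,\kappa_\sigma(x,y)\,\mu_W(\dd{y})\Big)=-1,
\]
the analogue for $\mu_W$ of equation~\eqref{eq: a(z,x) expansion}.

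For the analyticity I would run the Picard scheme $f_0\equiv 0$, $f_{n+1}:=T_zf_n$, and show first by induction that $z\mapsto f_n(z)(x)$ is holomorphic on $\Omega$ for each fixed $x\ge 1$. The inductive step combines the uniform bound $|f_n(z)(x)|\le\eta^{-1}$ (from the proof of Lemma~\ref{lemma:Tz contraction}) with two applications of Morera's theorem and dominated convergence. First, on a compact $K\subset\Omega$ with $\eta\ge\eta_0>\sqrt{\tilde c}$, the integrand in $g_n(z,x):=\int_1^\infty f_n(z)(y)\kappa_\sigma(x,y)\mu_W(\dd{y})$ is dominated by $\eta_0^{-1}\kappa_\sigma(x,\cdot)$, which is $\mu_W$-integrable since $\tau>2$, so $z\mapsto g_n(z,x)$ is holomorphic. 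Second, since $\Im g_n(z,x)\ge 0$ (as $f_n$ is $\overline{\C}^+$-valued), the integrand of $f_{n+1}(z)(x)=\ota\int_0^\infty\e^{\ota t(z+g_n(z,x))}\dd{t}$ is, for each $t$, holomorphic in $z$ and dominated by $\e^{-t\eta_0}$ on $K$, so $z\mapsto f_{n+1}(z)(x)$ is holomorphic.

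Finally I would upgrade the norm convergence $\|f_n-a_z^*\|_1\to 0$ to pointwise-in-$x$ convergence that is locally uniform in $z$, so that the Weierstrass convergence theorem yields holomorphy of the limit. Estimate~\eqref{eq:contraction step 2} gives
\[
|f_{n+1}(z)(x)-a_z^*(x)|\le\frac{\tau-1}{\eta^2}\int_1^\infty|f_n(z)(y)-a_z^*(y)|\,\kappa_\sigma(x,y)\,y^{-\tau}\dd{y}\le\frac{(\tau-1)\,C(x)}{\eta^2}\,\|f_n-a_z^*\|_1,
\]
where $C(x):=\max\{x,\,x^{1+\sigma+\beta-\tau}\}$ bounds $y\mapsto\kappa_\sigma(x,y)y^{\beta-\tau}$ on $[1,\infty)$ (finite precisely because $\beta<\tau-1$). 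On a compact $K\subset\Omega$ one has $\sup_{z\in K}\|a_z^*\|_1\le(\eta_0(\beta-1))^{-1}$, hence $\sup_{z\in K}\|f_n-a_z^*\|_1\le(\tilde c\eta_0^{-2})^n(\eta_0(\beta-1))^{-1}\to 0$, and therefore $f_n(\cdot)(x)\to a_z^*(\cdot)(x)$ locally uniformly on $\Omega$; consequently $z\mapsto a_z^*(x)$ is holomorphic on $\Omega$. Uniqueness of the analytic fixed point is inherited from the unconditional uniqueness of the fixed point at each $z$.

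I expect the analyticity to be the only genuine work here: existence and uniqueness are a one-line consequence of Lemma~\ref{lemma:Tz contraction} and Banach's theorem, whereas transferring holomorphy to the limit requires checking holomorphy of the iterates (differentiating under two integral signs) and improving the $L^1(\nu)$-convergence of the Picard sequence to a locally uniform one — which is exactly where the integrability margin $\kappa_\sigma(x,y)y^{-\tau}\le C(x)y^{-\beta}$ afforded by $\beta<\tau-1$ is used.
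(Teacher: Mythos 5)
Your existence and uniqueness argument coincides with the paper's: the paper justifies the corollary in a single line, remarking that it ``is immediate from the Banach fixed-point theorem for contraction mappings.'' That disposes of the fixed point at each fixed $z$, but it says nothing about the claimed analyticity of $z\mapsto a_z^*$, which Banach's theorem alone does not provide. This is precisely the part you develop: you run the Picard scheme $f_{n+1}=T_zf_n$, check by Morera plus dominated convergence that each iterate is holomorphic in $z$ on the region $\Omega=\{\Im(z)>\sqrt{\tilde c}\}$ where $T_z$ contracts, and then upgrade the $L^1(\nu)$-geometric convergence of the iterates to convergence that is locally uniform in $z$ by observing that $\kappa_\sigma(x,y)y^{-\tau}\le C(x)\,y^{-\beta}$ with $C(x)=\max\{x,\,x^{1+\sigma+\beta-\tau}\}$ finite because $\beta<\tau-1$; Weierstrass then gives holomorphy of the limit. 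Your bound $\|a_z^*\|_1\le (\eta(\beta-1))^{-1}$ and the resulting rate $(\tilde c\eta_0^{-2})^n$ are both correct, your verification that $\Ba$ is a complete metric space (being a closed, $\overline{\C}^+$-valued subset of the complex $L^1$) is a point the paper glosses over by calling $\Ba$ a ``Banach space'', and the rewriting of $T_za_z^*=a_z^*$ into the algebraic form $a_z^*(x)\big(z+\int a_z^*(y)\kappa_\sigma(x,y)\mu_W(\dd y)\big)=-1$ via the identity $\ota\int_0^\infty\e^{\ota tw}\dd t=-w^{-1}$, $w\in\C^+$, is valid. In short, where the paper asserts, you prove; the Picard-iteration argument for analyticity is a genuine and correctly executed addition. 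The one thing worth noting explicitly is that this establishes analyticity only on $\Omega$, not all of $\C^+$; that restriction is inherent in the hypothesis ``$T_z$ is a contraction'' and matches the paper, which later invokes the identity theorem to propagate conclusions across all of $\C^+$.
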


We know from \eqref{eq:a(z,x) complex identity} that
\begin{equation}\label{eq:a(z,x) fixed point}
    a_z(x) = \ota\int_0^{\infty}\e^{\ota tz}\exp\left\{ \ota t\int_1^{\infty} c_m^{-1}a_z(y)\kappa_\sigma(x,y)\mathbbm{1}_{\{y\leq m\}}\mu_W(\dd{y}) \right\}\dd{t}\,.
\end{equation}
Define $\tilde{a}_z$ as 
\begin{equation}\label{eq:a_tilde}
    \tilde{a}_z(x) = \ota\int_0^{\infty}\e^{\ota tz}\exp\left\{ \ota t\int_1^{\infty} c_m^{-1}a_z(y)\kappa_\sigma(x,y)\mu_W(\dd{y}) \right\}\dd{t}\,.
\end{equation}
Then, $\tilde{a}_z = T_z(c_m^{-1}a_z)$. We now have the following lemma. 
\begin{lemma}\label{lemma:a - a_tilde}
    Let $a_z$ and $\tilde{a}_z$ be as in \eqref{eq:a(z,x) fixed point} and \eqref{eq:a_tilde}, respectively. Then, 
    \[
    \|a_z - \tilde{a}_z\|_1 \leq \frac{C(m)}{\eta^3}\,,
    \]
    where $C(m)$ is a constant depending on $m$ such that $\lim_{m\to\infty}C(m)=0$.
\end{lemma}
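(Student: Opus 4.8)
The plan rests on the simple fact that $a_z$ and $\tilde a_z$ are built from the \emph{same} inner function $a_z$: they differ only in that the integral defining $a_z$ in~\eqref{eq:a(z,x) fixed point} is cut off at $m$, whereas the one defining $\tilde a_z$ in~\eqref{eq:a_tilde} runs over all of $[1,\infty)$. I would set
\[
I_1(x):=\int_1^\infty c_m^{-1}a_z(y)\kappa_\sigma(x,y)\one_{\{y\le m\}}\mu_W(\dd{y}),\qquad I_2(x):=\int_1^\infty c_m^{-1}a_z(y)\kappa_\sigma(x,y)\mu_W(\dd{y}),
\]
so that $a_z(x)=\ota\int_0^\infty\e^{\ota tz}\e^{\ota tI_1(x)}\dd{t}$ and $\tilde a_z(x)=\ota\int_0^\infty\e^{\ota tz}\e^{\ota tI_2(x)}\dd{t}$. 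Since $a_z(y)\in\overline{\C^+}$, $\kappa_\sigma\ge 1$ and $c_m>0$, both $I_1(x)$ and $I_2(x)$ lie in $\overline{\C^+}$, while their difference $I_2(x)-I_1(x)=\int_m^\infty c_m^{-1}a_z(y)\kappa_\sigma(x,y)\mu_W(\dd{y})$ is the tail of a convergent integral (convergent since $\sigma<\tau-1$).

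First I would estimate pointwise in $x$: applying~\eqref{eq:complex difference identity} to $\e^{\ota tI_1(x)}-\e^{\ota tI_2(x)}$, using $|\e^{\ota tz}|=\e^{-\eta t}$ and $\int_0^\infty t\e^{-\eta t}\dd{t}=\eta^{-2}$, gives $|a_z(x)-\tilde a_z(x)|\le\eta^{-2}|I_1(x)-I_2(x)|$, and then inserting the uniform bound $|a_z(y)|\le\eta^{-1}$ from~\eqref{eq:a(z,x) bound} yields
\[
|a_z(x)-\tilde a_z(x)|\le \frac{c_m^{-1}}{\eta^3}\int_m^\infty\kappa_\sigma(x,y)\,\mu_W(\dd{y}).
\]
Integrating against $\nu(\dd{x})=x^{-\beta}\dd{x}$ and using Fubini (all integrands nonnegative), I would bound, for each fixed $y\ge m$,
\[
\int_1^\infty x^{-\beta}\kappa_\sigma(x,y)\,\dd{x}=y^\sigma\int_1^y x^{1-\beta}\dd{x}+y\int_y^\infty x^{\sigma-\beta}\dd{x}\le c_{\tau,\sigma,\beta}\,y^\sigma,
\]
where the two elementary integrals converge precisely because $\beta>2$ and $\beta>1+\sigma$, and where one uses $y^{\sigma+2-\beta}\le y^\sigma$ for $y\ge1$. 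Hence, with $\mu_W(\dd{y})=(\tau-1)y^{-\tau}\dd{y}$,
\[
\|a_z-\tilde a_z\|_1\le \frac{c_m^{-1}}{\eta^3}\,(\tau-1)\,c_{\tau,\sigma,\beta}\int_m^\infty y^{\sigma-\tau}\dd{y}=\frac{C(m)}{\eta^3},
\]
and $C(m)$ is a fixed constant times $c_m^{-1}m^{\sigma+1-\tau}$, which tends to $0$ as $m\to\infty$ since $\sigma<\tau-1$ and $c_m=1-m^{1-\tau}\to1$.

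I do not expect a genuine obstacle here: the argument is bookkeeping, and the only point requiring care is checking that every power of $x$ and of $y$ that appears lands in the range where the corresponding integral converges and produces a negative power of $m$. This is exactly what the standing assumptions of this subsection deliver — in fact for this particular lemma only $\sigma<\tau-1$ and $\beta>2\vee(1+\sigma)$ are used — together with the trivial observation that $c_m^{-1}$ stays bounded as $m\to\infty$.
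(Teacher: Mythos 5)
Your overall strategy mirrors the paper's: use the elementary inequality $|\e^{\ota t z_1}-\e^{\ota t z_2}|\le t|z_1-z_2|$ on the tail term, pull out the uniform bound $|a_z|\le\eta^{-1}$, and integrate against $x^{-\beta}\dd{x}$. The one genuine difference is that you invoke Fubini and integrate $x^{-\beta}\kappa_\sigma(x,y)$ in $x$ first, whereas the paper uses the crude pointwise bound $\kappa_\sigma(x,y)\le (xy)^{1\vee\sigma}$ and integrates in $y$ first; these are essentially interchangeable.

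However, there is a computational slip in your decomposition of $\int_1^\infty x^{-\beta}\kappa_\sigma(x,y)\dd{x}$. Recall $\kappa_\sigma(x,y)=(x\vee y)(x\wedge y)^\sigma$, so for $x<y$ one has $\kappa_\sigma(x,y)=y\,x^\sigma$ (not $y^\sigma x$), and for $x>y$ one has $\kappa_\sigma(x,y)=x\,y^\sigma$. The correct split is therefore
\[
\int_1^\infty x^{-\beta}\kappa_\sigma(x,y)\,\dd{x}
= y\int_1^y x^{\sigma-\beta}\,\dd{x}
  + y^{\sigma}\int_y^\infty x^{1-\beta}\,\dd{x}
\le \frac{y}{\beta-1-\sigma}+\frac{y^{\sigma+2-\beta}}{\beta-2}
\le c_{\sigma,\beta}\,y^{1\vee\sigma},
\]
so the exponent on $y$ is $1\vee\sigma$, not $\sigma$. (When $\sigma<1$ the first term dominates and scales like $y$.) Consequently the resulting tail integral is $\int_m^\infty y^{(1\vee\sigma)-\tau}\dd{y}\asymp m^{(1\vee\sigma)+1-\tau}$, matching the paper's exponent, and its convergence requires $\tau>1+(1\vee\sigma)$ — not merely $\sigma<\tau-1$ as you assert. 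This makes no difference to the conclusion here, since the standing hypotheses of this subsection ($\tau>3$, $\sigma<\tau-2$, $\beta>2\vee(1+\sigma)$) comfortably guarantee both convergence and $C(m)\to 0$, but your remark on which hypotheses are actually used should be corrected to include $\tau>1+(1\vee\sigma)$ (in particular $\tau>2$).
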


\begin{proof}[Proof of Lemma \ref{lemma:a - a_tilde}]
    Since $a_z\in\Ba$, we again use \eqref{eq:complex difference identity} to get 
    \begin{align}\label{eq:a-atilde step 1}
        |a_z(x) - \tilde{a}_z(x)| &\leq \int_0^{\infty} \e^{-\eta t}t\left| \int_m^{\infty} c_m^{-1}a_z(y)\kappa_\sigma(x,y)\mu_W(\dd{y}) \right|\dd{t} \nonumber\\
        &\leq \frac{\tau-1}{c_m\eta^2}\int_m^{\infty}|a_z(y)|\kappa_\sigma(x,y)y^{-\tau}\dd{y}\,,
    \end{align}
    where we evaluate the integral over $t$ to get the factor of $\eta^{-2}$ in \eqref{eq:a-atilde step 1}. Recall that $c_m=1-m^{-(\tau-1)}$. Using \eqref{eq:a(z,x) bound}, we have that 
    \begin{equation}\label{eq:a-atilde step 2}
        |a_z(x) - \tilde{a}_z(x)| \leq \frac{\tau-1}{c_m\eta^3}\int_m^{\infty}\kappa_\sigma(x,y)y^{-\tau}\dd{y}.
    \end{equation}
    Since $\kappa_\sigma(x,y)\leq (xy)^{1\vee\sigma}$, we have
    % . Thus, \eqref{eq:a-atilde step 2} simplifies to 
    \begin{equation}\label{eq:a-atilde step 3}
        |a_z(x) - \tilde{a}_z(x)| \leq \frac{\tau-1}{c_m\eta^3}x^{1\vee\sigma}\int_m^{\infty} y^{(1\vee\sigma)-\tau}\dd{y} = \frac{(\tau-1) m^{(1\vee\sigma)-(\tau-1)}}{c_m((\tau-1)-(1\vee \sigma))\eta^3}x^{1\vee\sigma},
    \end{equation}
    where we use the fact that $\tau>\max(2,1+\sigma)$, and so the integral evaluated in \eqref{eq:a-atilde step 3} is finite. Define 
    $$c(m) := \frac{(\tau-1) c_m^{-1}m^{(1\vee\sigma)-(\tau-1)}}{(\tau-1)-(1\vee\sigma)}.$$ Since $c_m $ tends to one, and $m^{(1\vee\sigma)-(\tau-1)} $ tends to zero we have $c(m) = o_m(1)$. Now, integrating both sides of \eqref{eq:a-atilde step 3} against $x^{-\beta}\dd{x}$ gives us 
    \begin{equation}
        \|a_z - \tilde{a}_z\|_1 \leq \frac{c(m)}{\eta^3}\int_1^{\infty}x^{1\vee\sigma - \beta}\dd{x} = \frac{C(m)}{\eta^{3}}\,,
    \end{equation}
    since $\beta>2\vee1+\sigma$, and where $C(m)=o_m(1)$, completing the proof. 
\end{proof}

We are now at the penultimate step, where we have the necessary tools to show the convergence of $a_z$ to $a_z^*$ in the space $\Ba$. 
\begin{lemma}\label{lemma:a_z convergence}
    Let $a^*_z$ be the unique fixed point of the contraction map $T_z$ defined in \eqref{eq:Tz map}. Then, we have that 
    \begin{equation}
       \lim_{m\to\infty} \|a_z-a_z^*\|_1 = 0\, .
    \end{equation}
\end{lemma}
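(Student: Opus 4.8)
The plan is a routine fixed-point stability argument: interpolate between $a_z$ and $a^*_z$ through $\tilde a_z$, using Lemma~\ref{lemma:a - a_tilde} to control $\|a_z-\tilde a_z\|_1$ and the contraction property of $T_z$ (Lemma~\ref{lemma:Tz contraction}, in the form~\eqref{eq:contraction inequality}) together with $T_z(a^*_z)=a^*_z$ (Corollary~\ref{corollary:Fixed point corollary}) to control $\|\tilde a_z-a^*_z\|_1$. The crucial structural point is that $a^*_z$ and the map $T_z$ carry no $m$-dependence: all the truncation is confined to $a_z$, to $\tilde a_z=T_z(c_m^{-1}a_z)$ (cf.~\eqref{eq:a_tilde}) and to the normalising constant $c_m=1-m^{-(\tau-1)}\to 1$.

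First I would record the uniform estimate $\|a_z\|_1\le (\eta(\beta-1))^{-1}$, immediate from the pointwise bound $|a_z(x)|\le\eta^{-1}$ of~\eqref{eq:a(z,x) bound} together with $\beta>2$. Writing $D_m:=\|a_z-a^*_z\|_1$, the triangle inequality combined with $\tilde a_z=T_z(c_m^{-1}a_z)$, $a^*_z=T_z(a^*_z)$ and~\eqref{eq:contraction inequality} gives
\[
D_m \;\le\; \|a_z-\tilde a_z\|_1 + \|T_z(c_m^{-1}a_z)-T_z(a^*_z)\|_1 \;\le\; \frac{C(m)}{\eta^3} + \frac{\tilde c}{\eta^2}\,\|c_m^{-1}a_z-a^*_z\|_1 ,
\]
the first term being controlled by Lemma~\ref{lemma:a - a_tilde} with $C(m)=o_m(1)$. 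Bounding $\|c_m^{-1}a_z-a^*_z\|_1\le |c_m^{-1}-1|\,\|a_z\|_1 + D_m$ and inserting the estimate for $\|a_z\|_1$ yields
\[
D_m\Bigl(1-\frac{\tilde c}{\eta^2}\Bigr) \;\le\; \frac{C(m)}{\eta^3} + \frac{\tilde c\,|c_m^{-1}-1|}{\eta^3(\beta-1)} .
\]

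Since one works in the regime $\eta^2>\tilde c$ in which $T_z$ is a contraction, the factor $1-\tilde c\eta^{-2}$ is strictly positive, so
\[
D_m \;\le\; \frac{1}{1-\tilde c\eta^{-2}}\left(\frac{C(m)}{\eta^3}+\frac{\tilde c\,|c_m^{-1}-1|}{\eta^3(\beta-1)}\right) ,
\]
and the right-hand side tends to $0$ as $m\to\infty$ because $C(m)\to 0$ and $c_m\to 1$. This gives $\lim_{m\to\infty}\|a_z-a^*_z\|_1=0$. The only input with real content is Lemma~\ref{lemma:a - a_tilde} (already established), which isolates the tail contribution of the untruncated kernel; past that, the argument is purely mechanical. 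I do not foresee any genuine obstacle — the only point that needs a little care is that the contraction constant $\tilde c\eta^{-2}$ and the bound on $\|a_z\|_1$ are \emph{uniform in $m$}, which holds precisely because $T_z$ and the pointwise bound~\eqref{eq:a(z,x) bound} are formulated with the full Pareto law $\mu_W$ rather than $\mu_{W,m}$.
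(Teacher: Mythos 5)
Your proof is correct and follows essentially the same route as the paper: split $\|a_z-a^*_z\|_1$ through $\tilde a_z$, invoke Lemma~\ref{lemma:a - a_tilde} for the first piece, use the contraction bound~\eqref{eq:contraction inequality} together with $a^*_z=T_z(a^*_z)$ for the second, and rearrange. The only difference is cosmetic: you decompose $c_m^{-1}a_z-a^*_z=(c_m^{-1}-1)a_z+(a_z-a^*_z)$ and control $\|a_z\|_1\le(\eta(\beta-1))^{-1}$, whereas the paper writes $c_m^{-1}a_z-a^*_z=c_m^{-1}(a_z-a^*_z)+(c_m^{-1}-1)a^*_z$ and appeals to the finiteness of $\|a^*_z\|_1$; your split actually yields the contraction condition $\eta^2>\tilde c$ without the extra $c_m^{-1}$ factor that the paper carries, which is marginally cleaner.

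One point where the paper's proof goes further: after obtaining the bound in the regime $\eta^2>\tilde c$, the paper explicitly remarks that the convergence a priori holds only on an appropriate sub-domain $D_\eta\subset\C^+$, and invokes analyticity of $z\mapsto a_z(x)$ and $z\mapsto a^*_z(x)$ together with the identity theorem to propagate the conclusion to every $z\in\C^+$. You note that you ``work in the regime $\eta^2>\tilde c$'' but do not spell out this extension step. Since the fixed point $a^*_z$ is only constructed (via Corollary~\ref{corollary:Fixed point corollary}) where $T_z$ is a contraction, this is arguably consistent with the lemma as literally stated; but the downstream Theorem~\ref{theorem:stieltjesfinal} needs the identification of $\St_{\mu_{\sigma,\tau}}$ on all of $\C^+$, so it is worth at least flagging the analytic-continuation step that the paper carries out.
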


\begin{proof}[Proof of Lemma \ref{lemma:a_z convergence}]
    We have, using Lemma \ref{lemma:a - a_tilde} and the fact that $T_z$ is a contraction,  that 
    \begin{align*}
        \|a_z - a_z^*\|_1 &\leq \|a_z - \tilde{a}_z\|_1 + \|\tilde{a}_z - a^*_z\|_1  \nonumber\\
        &\leq C(m)\eta^{-3} + \|T_z(c_m^{-1}a_z) - T_z(a^*_z)\|_1 \nonumber\\
        &\leq C(m)\eta^{-3} + \tilde{c}\eta^{-2}\|c_m^{-1}a_z - a^*_z\|_1 \nonumber \\
        &\leq C(m)\eta^{-3} +\tilde{c}\eta^{-2}c_m^{-1}\|a_z-a^*_z\|_1 + \tilde{c}\eta^{-2}\|a^*_z\|_1|c_m^{-1}-1|\,.
    \end{align*}
    Thus, choosing $\eta>0$ such that $0<1-\tilde{c} c_m^{-1}\eta^{-2} <1$, we have that 
    \begin{align}\label{eq:a-a*}
        \|a_z - a^*_z\|_1 \leq \frac{1}{1-C_\tau c_m^{-1}\eta^{-2}}\left(C(m)\eta^{-3} + C_\tau\eta^{-2}\|a_z^*\|_1|c_m^{-1}-1|\right)\,.
    \end{align}
    Now, as $m\to\infty$, we have that $C(m)\to0$, and $c_m\to 1$. Since $\|a_z^*\|<\infty$, we have that the right-hand side of \eqref{eq:a-a*} goes to 0 as $m\to\infty$. Thus, $\|a_z-a^*_z\|_1\to0$ as $m\to\infty$ for $z$ in an appropriate domain $D_\eta\subset \C^{+}$. However, in the complex variable $z$, the domains of $a_z$ and $a^*_z$ are $\C^+$. Since the convergence holds for an open set of this domain (that is, in $D_\eta\subset\C^+$), by the identity theorem of complex analysis, the convergence holds everywhere in $\C^+$, that is, for each $z\in\C^+$.
\end{proof}

We now proceed towards a proof of Theorem \ref{theorem:stieltjesfinal}, and to achieve this we wish to take the limit $m\to\infty$ to characterise $\mathrm{S}_{\mu_{\sigma,\tau}}$. We know that since $\lim_{m\to\infty}\mu_{\sigma,\tau,m}= \mu_{\sigma,\tau}$, then for each $z\in\C^+$, $\lim_{m\to\infty}\St_{\mu_{\sigma,\tau,m}}(z)=\St_{\mu_{\sigma,\tau}}(z)$. 

\begin{proof}[Proof of Theorem \ref{theorem:stieltjesfinal}]
    Let $a_z^*$ be the unique fixed point of the contraction mapping $T_z$ as in Corollary \ref{corollary:Fixed point corollary}, and let $\St_{\mu_{\sigma,\tau}}(z)$ be the Stieltjes transform of $\mu_{\sigma,\tau}$ for any $z\in\C^+$. We wish to show that  
    \[
    \St_{\mu_{\sigma,\tau}}(z) = \int_1^{\infty}a_z^*(x)\mu_W{(\dd{x})}.
    \] 
    We have that 
    \begin{align}\label{eq:Stieltjes convergence step 1}
        &\left| \int_1^{\infty}a_z(x)\mu_{W,m}({\dd{x}}) - \int_1^{\infty}a_z^*(x)\mu_W({\dd{x}}) \right| \nonumber\\
        &\leq \left| \int_1^{\infty}a_z(x)\mu_{W,m}({\dd{x}}) - \int_1^{\infty}a_z^*(x)\mu_{W,m}({\dd{x}})     \right| + \left|  \int_1^{\infty}a_z^*(x)\mu_{W,m}({\dd{x}}) - \int_1^{\infty}a_z^*(x)\mu_W({\dd{x}})      \right|.
    \end{align}
    The first term in \eqref{eq:Stieltjes convergence step 1} can be evaluated as 
    \begin{align}\label{eq:Stieltjes convergence step 2.1}
        \left| \int_1^{\infty}a_z(x)\mu_{W,m}({\dd{x}}) - \int_1^{\infty}a_z^*(x)\mu_{W,m}({\dd{x}})     \right|  &\leq (\tau-1) c_m^{-1}\int_1^m|a_z(x)-a_z^*(x)| x^{-\tau}\dd{x} \nonumber\\
        &\leq (\tau-1) c_m^{-1}\int_1^{\infty}|a_z(x)-a_z^*(x)|x^{-\beta}x^{\beta-\tau}\dd{x} \nonumber\\
        &\leq (\tau-1) c_m^{-1}\|a_z-a_z^*\|_1={o}_m(1),
    \end{align}
    as $x^{\beta-\tau}\leq 1$, and $\|a_z-a^*_z\|_1=o_m(1)$ from Lemma \ref{lemma:a_z convergence}. The second term of \eqref{eq:Stieltjes convergence step 1} can be evaluated as 
    \begin{align}\label{eq:Stieltjes convergence step 2.2}
        &\left|  \int_1^{\infty}a_z^*(x)\mu_{W,m}({\dd{x}}) - \int_1^{\infty}a_z^*(x)\mu_W({\dd{x}})      \right|\nonumber\\
        &\leq c_m^{-1}\left|  \int_1^{m}a_z^*(x)\mu_{W}({\dd{x}}) - \int_1^{\infty}a_z^*(x)\mu_W({\dd{x}})      \right| + \left| \int_1^{\infty}a_z^*(x)\mu_W(\dd{x})   \right||c_m^{-1}-1| \nonumber\\
        &\leq \frac{(\tau-1)}{c_m\eta}\int_m^{\infty}x^{-\tau}\dd{x} + \frac{|c_m^{-1}-1|}{\eta} = \frac{(\tau-1) m^{1-\tau}}{c_m\eta} + \frac{|c_m^{-1}-1|}{\eta}= {o}_m(1),
    \end{align}
    since $|a_z^*|\leq \eta^{-1}$. 
    Combining \eqref{eq:Stieltjes convergence step 2.1} and \eqref{eq:Stieltjes convergence step 2.2} completes the proof of the theorem. 
\end{proof}
\section*{Acknowledgements} 
\begin{wrapfigure}{l}{0.07\textwidth} 
  \vspace{-\intextsep}   \includegraphics[width=0.07\textwidth]{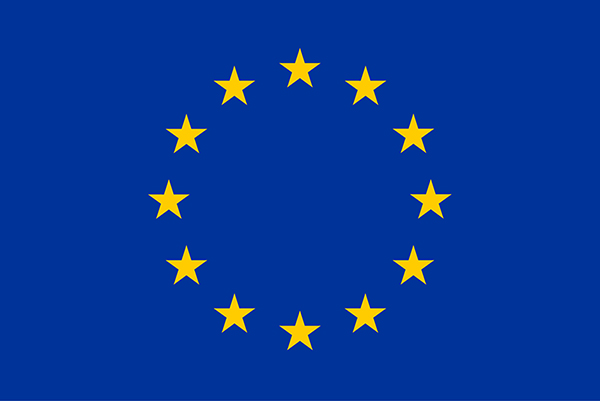}
\end{wrapfigure}
\noindent A.C.~acknowledges the hospitality of Leiden University where part of this work was carried out, and of the NETWORKS program for funding. The work of R.S.H.~and N.M.~is supported in part by the Netherlands Organisation for Scientific Research (NWO) through the Gravitation NETWORKS grant 024.002.003. The work of N.M. is further supported by the European Union’s Horizon 2020 research and innovation programme under the Marie Skłodowska-Curie grant agreement no. 945045. M.S.~is supported by
the MUR Excellence Department Project MatMod@TOV awarded to the Department of
Mathematics, University of Rome Tor Vergata, CUP E83C18000100006, and by the MUR 2022 PRIN project GRAFIA, project code 202284Z9E4. M.S. is also part of the INdAM group GNAMPA.
The authors thank Joost Jorritsma for pointing out an imprecision in a previous version of the draft.

\bibliographystyle{abbrvnat}
\bibliography{reference.bib}
\end{document}